\documentclass[12pt,a4paper]{article}
\usepackage[utf8]{inputenc}
\usepackage[T1]{fontenc}
\usepackage{amsmath}
\usepackage{amsfonts}
\usepackage{amssymb}
\usepackage{makeidx}
\usepackage{graphicx}
\usepackage[T1]{fontenc}
\usepackage{csquotes}
\usepackage{hyperref}
\usepackage[french,english]{babel}
\usepackage{amsfonts,amsmath,amssymb}
\usepackage{amsthm}
\usepackage{bbold}
\usepackage{cases}
\usepackage{stackengine}
\usepackage{scalerel}
\frenchbsetup{StandardLists=true} 
\usepackage{enumitem}
\usepackage{fullpage} 
\usepackage{graphicx}
\usepackage{xcolor}


\newtheorem{thm}{Theorem}[section]    
\newtheorem{prop}[thm]{Proposition}    
\newtheorem{defi}{Definition}[section]                             
\newtheorem{cor}[thm]{Corollary}    
\newtheorem{lemme}[thm]{Lemma}   
\newtheorem{NB}{\textbf{\underline{Remark}}}

\newcommand{\R}{\mathbb R}
\newcommand{\Ge}{\mathbb G}
\newcommand\hh{\mathbb{H}}
\newcommand\bbb{\mathbb{B}}
\newcommand\tbbb{\tilde{\mathbb{B}}}
\newcommand{\pr}{\mathbb{P}}
\newcommand{\esp}{\mathbb{E}}

\newcommand{\bbr}{B^{br}}
\newcommand{\tbbr}{\tilde{B}^{br}}

\setlength{\parskip}{1ex plus 0.5ex minus 0.2ex}

\newcommand{\ta}{\tilde{\alpha}}
\newcommand{\tg}{\tilde{g}}

\newcommand{\ph}{\varphi}

\newcommand{\Bet}{\boldsymbol{\beta}}
\newcommand{\tX}{\tilde{X}}
\newcommand{\tx}{\tilde{x}}
\newcommand{\tz}{\tilde{z}}
\newcommand{\hx}{\hat{x}}
\newcommand{\hz}{\hat{z}}
\newcommand{\txi}{\tilde{\xi}}
\newcommand{\uX}{\underline{X}}
\newcommand{\utX}{\tilde{\underline{X}}}

\newcommand{\ubbb}{\underline{\mathbb{B}}}
\newcommand{\utbbb}{\tilde{\underline{\mathbb{B}}}}

\newcommand{\uzeta}{\underline{\zeta}}

\newcommand{\bd}{\boldsymbol{\delta}}

\newcommand{\symp}{\odot}
\newcommand{\ve}{v}

\newcommand{\bX}{\bar{X}}
\newcommand{\bZ}{\bar{Z}}

\DeclareMathOperator{\Tr}{Tr}
\DeclareMathOperator{\Span}{Span}
\DeclareMathOperator{\diag}{Diag}
\DeclareMathOperator{\osc}{osc}

\DeclareMathOperator{\trans}{trans}
\usepackage{authblk}

\newcommand{\Hm}[1]{\leavevmode{\marginpar{\tiny%
$\hbox to 0mm{\hspace*{-0.5mm}$\leftarrow$\hss}%
\vcenter{\vrule depth 0.1mm height 0.1mm width \the\marginparwidth}%
\hbox to 0mm{\hss$\rightarrow$\hspace*{-0.5mm}}$\\\relax\raggedright
#1}}}

 \date{}                     

\title{Non co-adapted couplings of {Brownian} motions on free, step 2 Carnot groups}

\author[1]{Magalie Bénéfice}
\affil[1]{Université de Lorraine, CNRS, IECL, F-54000 Nancy, France}
\setcounter{Maxaffil}{0}

\begin{document}

\maketitle

\begin{abstract}
On the free, step $2$ Carnot groups of rank $n$ $\Ge_n$, the subRiemannian Brownian motion consists in a $\mathbb{R}^n$-Brownian motion together with its $\frac{n(n-1)}{2}$ Lévy areas. In this article we construct an explicit successful non co-adapted coupling of Brownian motions on $\Ge_n$. We use this construction to obtain gradient inequalities for the heat semi-group on all the homogeneous step $2$ Carnot groups. Comparing the first coupling time and the first exit time from a domain, we also obtain gradient inequalities for harmonic functions on $\Ge_n$. These results generalize the coupling strategy by Banerjee, Gordina and Mariano on the Heisenberg group. 
\end{abstract}
	\section{Introduction}
	\subsection{Motivation}
    	Aside from providing a better understanding of the geometry of the state space, the coupling method for Brownian motions is a great tool for many analysis results involving the harmonic functions and the heat semi-group such as Harnack, Poincaré, Sobolev or Wasserstein inequalities (see~\cite{kuwada,WangBInequalities,CranstonSoblogRn,CranstonRefl} for some examples).
	This method has been studied these last decades in the case of Riemannian manifolds ({see for example \cite{Kendall86,pascu2018couplings}}). The case of subRiemannian manifolds is a current topic of interest and have been investigated on the Heisenberg group in~\cite{CranstonKolmogorov,Kolmogorov,kendall-coupling-gnl,kendall2009brownian,kendall2007coupling,banerjee2017coupling,bonnefont2018couplings,CoAdaptSuccessHeisenberg2,luo2024nonmarkovian}, on $SU(2)$ \cite{KendallSU(2),Nonco-adaptedSU(2),luo2024nonmarkovian} and on $SL(2,\mathbb{R})$ \cite{Nonco-adaptedSU(2),luo2024nonmarkovian}. On the spaces considered in this work, the subRiemannian Brownian motion can be written under the form $(X_t,z_t)_t$ where $(X_t)_t$ is a Brownian motion on a Riemannian base and $z_t$ can be interpreted as an area swept by $(X_s)_{s\leq t}$. The usual strategy of coupling consists in defining a coupling $(X_t,\tX_t)_t$ on the Riemannian base. The coupling approach is quite useful in these cases as it can deal with some of the above mentioned problems without the intervention of some geometric or analytic objects that are difficult to define (like the Ricci curvature in subRiemannian manifolds).
	
	The free, step $2$ Carnot groups $\Ge_n$, $n\geq 2$, are interesting to study as they can be projected to all homogeneous step $2$ Carnot groups.
	In this work we consider couplings of the paths of two Brownian motions $(\bbb_t)_t$ and $(\tbbb_t)_t$ on $\Ge_n$ starting at $g$ and $\tg$ respectively. In other words we study the joint law of $(\bbb_t,\tbbb_t)_t$. We are particularly interested in successful couplings, i.e., couplings such that the first meeting time $\tau:=\inf\{t\geq0\ | \ \bbb_t=\tbbb_t\}$ is a.s. finite.
	
	 Successful couplings have been first used to obtain estimates of the Total Variation distance between the laws of the processes. {Indeed, denoting by $P_t$ the associated heat-semi group and constructing the coupling $(\bbb_t,\tbbb_t)_t$ such that $\bbb_t=\tbbb_t$ for all $t\geq \tau$, for any measurable function $f$, it is clear that: \begin{equation}\label{eq: Chap1SemigpIn0}
 	|P_tf(g)-P_tf(\tg)|\leq\esp[|f(\bbb_t)-f(\tbbb_t)|\mathbb{1}_{\tau>t}].
 \end{equation}
 From $\eqref{eq: Chap1SemigpIn0}$ we can obtain the Coupling inequality (or Aldous inequality) for Markov processes  (see~\cite{asmussen2003applied}, chapter VII):}
 \begin{equation}\label{eq: Chap1Aldous}
 	d_{TV}\left(\mathcal{L}(\bbb_t),\mathcal{L}(\tbbb_t)\right)\leq \pr(\tau>t) \text{ for all $t>0$}.
 \end{equation}
 If Inequality (\ref{eq: Chap1Aldous}) can been changed into an equality, the coupling is called maximal (note that a maximal coupling is not always successful). When the two quantities in (\ref{eq: Chap1Aldous}) have the same order in $t$, the coupling is called efficient. In an Euclidean space and more generally for a Riemannian manifold having a kind of "reflection structure" just like the plane or the sphere, a maximal and successful coupling is the reflection coupling (see~\cite{ReflKuwada,HsuSturmMaxEuc}).
 Maximal couplings of càdlàg processes can always be constructed on Polish spaces~\cite{MaximalCouplingSverchkov}. However, most of these constructions are not Markovian and even not co-adapted, i.e., we need some knowledge of the future of one of the process to construct the second one. Their study can then be difficult. 
 
 In the case of SubRiemannian Brownian motions, the recent results on the Heisenberg group $\hh$ suggest that maximal couplings, or at least efficient couplings, can be obtained only with non co-adapted couplings. Indeed, in \cite{banerjee2017coupling}, Banerjee, Gordina and Mariano gave the explicit construction of a non co-adapted successful coupling on $\hh$. They proved not only that this coupling is efficient, but also that co-adapted coupling strategies cannot produce any efficient couplings. In particular, the successful co-adapted strategies on $\hh$ by Ben Arous, Cranston and Kendall \cite{CranstonKolmogorov} and Kendall \cite{kendall2007coupling,kendall-coupling-gnl} are not efficient. To enforce this idea, similar results have been obtained by Banerjee and Kendall for the Kolmogorov diffusion \cite{BanerjeeKolmogorov} which, as the Heisenberg Brownian motion, is an hypoelliptic diffusion. 
 
 Following these works, successful and efficient couplings have been studied in various subRiemannian manifolds. In \cite{KendallSU(2)}, we extended Kendall's coupling from $\hh$ to $SU(2)$. In \cite{Nonco-adaptedSU(2)} we constructed a non co-adapted successful coupling, inspired from from the work by Banerjee, Gordina and Mariano, on $SU(2)$ and on $SL(2,\mathbb{R})$ (on $SL(2,\mathbb{R})$, the strategy is a.s. successful only if the Brownian motions start from the same vertical fiber). Using a different strategy, Luo and Neel \cite{luo2024nonmarkovian} constructed a successful coupling on $\hh$, $SU(2)$ and $SL(2,\mathbb{R})$ (with the same initial conditions). They also extended this method to the non-isotropic Heisenberg groups and the universal covering of $SL(2,\mathbb{R})$. {In particular, when the Brownian motions start from the same vertical fiber, their strategy is not merely efficient but is maximal.}
 
For all these non co-adapted strategies, the coupling rate $\pr(\tau>t)$ can be {upper-bounded by a function of $t$} and of the initial distance $d_{cc}(g,\tg)$ (where $d_{cc}$ is the usual subRiemannian distance, namely the Carnot-Carathéodory distance). {Using \eqref{eq: Chap1SemigpIn0}, this provides an estimate of the Total Variation distance between the laws of the subRiemannian Brownian
motions as well as a direct inequality for the associated heat-semi group:
 \begin{equation}\label{eq: Chap1SemigpIn}
 	|P_tf(g)-P_tf(\tg)|\leq 2\|f\|_{\infty}\pr(\tau>t)
 \end{equation}
 where $f$ is a bounded measurable function.} In particular this leads to estimates of horizontal gradient of the semi group.

In \cite{banerjee2017coupling}, Banerjee, Gordina and Mariano also used the non co-adapted coupling to obtain gradients inequalities of {harmonic functions on bounded domains in the Heisenberg group}. This generalizes an idea developped by Cranston in \cite{CranstonSoblogRn} for Brownian motions on $\mathbb{R}^n$ and in \cite{CranstonRefl} for Brownian motions on the complete Riemannian manifolds with Ricci curvature admitting some lower bound. The main idea is to compare the first coupling time $\tau$ with the first exit times $\tau_D(\bbb)$ and $\tau_D(\tbbb)$ of a domain $D$ for the processes $(\bbb_t)_t$ and $(\tbbb_t)_t$ respectively. On the Heisenberg group, Banerjee, Gordina and Mariano obtained the existence of a constant $C>0$ such that, for any domain $D$, $g\in D$ and $f\in\mathcal{C}(\bar{D})$:
\begin{equation}\label{eq: Chap1 GradientDomainHeisenberg}
 \|\nabla_{\mathcal{H}}f(g)\|_{\mathcal{H}}\leq C\left(1+\frac{1}{\delta_g}+\frac{1+(1+\delta_g)^3}{\delta_g^4}\right) f(g)
 \end{equation}
 where $\delta_g:=\bd(g,D^c)$ and $\bd$ is a pseudo-distance on $\hh$ equivalent to the Carnot-Carathéodory distance.
This also lead to the Cheng-Yau inequality on $\hh$.
 
 \subsection{Main results}\label{subsec: Chap1 ResultatsNonCo-adapteCarnot}
 The main idea of this paper is the generalisation of the results by Banarjee, Gordina and Mariano \cite{banerjee2017coupling} to the free, step $2$ Carnot groups $\Ge_n$ of rank $n\geq 3$.
 Denote by $\mathfrak{so}(n)$ the set of real skew-symmetric matrices of dimension $n\times n$. We define the symplectic form $x\symp {\tx}:=x\cdot {\tx}^{t}-{\tx}\cdot x^{t}$ where $x^t\in M_{1,n}(\R)$ denotes the transpose of $x$.
 
 \begin{defi}
  Set $n\geq 2$. The free step $2$ homogeneous Carnot group of rank $n$ is the Lie group $\Ge_n:=\left(\mathbb{R}^n\times\mathfrak{so}(n),\star\right)$ where:
  \begin{equation*}
  	\left(x,z\right)\star \left({\tx},{\tz}\right)=\left(x+{\tx},z+{\tz}+\frac{1}{2}x\symp {\tx}\right)\text{ for all }\left(x,z\right),\left({\tx},{\tz}\right)\in\mathbb{R}^n\times\mathfrak{so}(n).
  \end{equation*}
\end{defi}

On $\Ge_n$ the Brownian motion can be written $(\bbb_t)_t=(X_t,z_t)_t$ where:\begin{itemize}
    \item $(X_t)_t$ a Brownian motion on $\mathbb{R}^n$;
    \item for $1\leq i,j\leq n$ such that $i\neq j$, $(z_t^{i,j})_t$ is the Levy area between $(X_s^i)_{s\leq t}$ and $(X_s^j)_{s\leq t}$.
\end{itemize}
For $n=2$, $\Ge_n$ is isomorphic to the usual Heisenberg group $\hh$. 

Our first result is the construction of an explicit non co-adapted coupling  on $G_n$:
\begin{thm}\label{thm: successfulGn}
    Set $n\geq 2$. Let $g=(x,z)$, $\tilde{g}=(\tilde{x},\tilde{z})$ be two points in $\Ge_n$ and $\zeta\in \mathfrak{so}(n)$ such that $g^{-1}\star \tg=(\tx-x,\zeta)$. There exists a successful coupling of Brownian motions $\left(\bbb_t,\tbbb_t\right)_t$ on $\Ge_n$ starting from $(g,\tg)$. Moreover, denoting $\tau:=\inf\{t\geq 0\,|\,\bbb_t=\tbbb_t\}$, for all $t\geq \beta_n\|x-\tx\|_2^2$:
    \begin{equation}\label{eq:chap1 couplingRateG}
    \pr(\tau>t)\leq \left(C_1(n)\frac{||x-\tx||_2}{\sqrt{t}}+C_2(n)\frac{||\zeta||_2}{t}\right)\end{equation}
    where: \begin{itemize}
        \item  $\beta_n=(n-1)^{\frac{3}{2}}b_n$ with $b_n{=2\sqrt{\pi n}}
$;
\item $C_1(n)
    =4\sqrt{\beta_n}(n-1)+\frac{1}{\sqrt{\pi}}$;
    \item $C_2(n)=2\beta_n$;
    \item $\|z\|_2=\sqrt{\sum\limits_{1\leq i<j\leq n}z_{i,j}^2}$.\end{itemize}
\end{thm}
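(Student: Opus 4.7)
My plan is to build the coupling in two stages, generalising the strategy of Banerjee--Gordina--Mariano on $\hh$ to the $\binom{n}{2}$-dimensional matrix Lévy area of $\Ge_n$. At any time $t$ the difference $\bbb_t^{-1}\star\tbbb_t$ splits into a horizontal part $\tX_t-X_t\in\R^n$ and a skew-symmetric area part in $\mathfrak{so}(n)$, and the group law shows that the horizontal gap is driven only by the horizontal Brownian drivers, whereas the area gap picks up a drift of the form $\frac12(X\symp d\tX-\tX\symp dX)$. This decomposition suggests treating the two gaps sequentially.

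\emph{Stage A (horizontal coupling).} While $X\neq\tX$, I would run a standard reflection coupling in $\R^n$: reflect the driving Brownian motion of $\tX$ across the hyperplane orthogonal to $X-\tX$. The norm $\|X_t-\tX_t\|_2$ then behaves as the absolute value of a one-dimensional Brownian motion of initial value $\|x-\tx\|_2$, and the hitting time $\tau_0=\inf\{t:\,X_t=\tX_t\}$ obeys the classical bound $\pr(\tau_0>t)\leq \|x-\tx\|_2/\sqrt{\pi t/2}$, which accounts for the first term on the right of (\ref{eq:chap1 couplingRateG}) through the $1/\sqrt{\pi}$ constant in $C_1(n)$. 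After $\tau_0$ the horizontal parts are kept equal by a synchronous coupling on a driver to be specified in Stage B.

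\emph{Stage B (area coupling).} Once $X=\tX$, the only remaining discrepancy is a skew matrix $\zeta'\in\mathfrak{so}(n)$, and I need to steer it to zero without re-separating the horizontal parts. The core building block is a non co-adapted two-dimensional device, directly imported from \cite{banerjee2017coupling}: given a target duration $T$, a plane $\mathcal P\subset\R^n$ and a scalar target $a$, one constructs Brownian motions $B$ and $\tilde B$ on $\mathcal P$ that start and end together on $[0,T]$ and whose scalar Lévy area difference equals $a$, with probability of failure bounded by $C|a|/T$. Since an arbitrary $\zeta'\in\mathfrak{so}(n)$ can, after an orthogonal change of frame, be reduced to at most $n-1$ non-trivial entries lying in successive two-planes, I would schedule Stage B as $n-1$ successive phases of duration of order $t/(n-1)$, each phase using the above device in one chosen two-plane to annihilate one remaining entry while the other coordinates are propagated synchronously.

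\emph{Main obstacle.} The delicate point, absent on $\hh$, is the cross-coupling between phases: every two-dimensional correction aimed at one entry of $\zeta'$ generically disturbs the other entries through the symplectic form $\symp$. The argument therefore has to choose the order of phases carefully (for instance by working in a frame that brings $\zeta'$ close to block-diagonal) and to track, phase by phase, how the leftover area norm propagates. The powers of $n-1$ in $\beta_n$, $C_1(n)$ and $C_2(n)$ will arise precisely from this bookkeeping: $n-1$ phases each failing with probability of order $\|\zeta'\|_2/(t/(n-1))$ and each requiring a ``waiting time'' of order $(n-1)^{1/2}$ (to absorb the worst-case amplification of the area under the Stage A reflection) combine into the stated thresholds. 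Finally, if either stage fails I restart from the residual configuration; the strong Markov property together with geometric summation of the per-attempt failure probabilities then yields (\ref{eq:chap1 couplingRateG}) and the almost-sure finiteness of $\tau$.
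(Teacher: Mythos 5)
Your Stage~A is the paper's Stage~A, and your overall two-stage architecture (reflect until $\tau_0$, then couple areas, then synchronise) matches. The genuine gap is in Stage~B, and it is precisely the "obstacle'' you flag but do not resolve.

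The scalar Heisenberg device of Banerjee--Gordina--Mariano keeps one horizontal coordinate free (say $X^{i_0}\neq\tX^{i_0}$ on a bridge interval) while synchronising the other. On $\Ge_n$, if you keep $X^{i_0}$ free and synchronise every $X^j=\tX^j$ for $j\neq i_0$, then indeed $\zeta^{l,m}$ for $l,m\neq i_0$ stays constant -- but \emph{all} $n-1$ entries $\zeta^{i_0,j}$, $j\neq i_0$, evolve simultaneously through $\int(\tX^{i_0}-X^{i_0})\,dX^j$. You cannot steer a single entry $\zeta^{i_0,j_0}$ to zero while leaving the other $\zeta^{i_0,j}$'s untouched; block-diagonalising $\zeta'$ in advance, scheduling phases cleverly, or using a two-plane device (which needs two free coordinates and therefore disturbs two full rows at once) does not remove this. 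Iterating the scalar device would randomise the remaining row entries on every attempt, and your sketch gives no mechanism to contract that randomisation; a geometric restart argument on its own does not yield a bound proportional to $\|\zeta\|_2/t$.

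What the paper actually does is promote the device to a genuinely vector-valued one: Proposition~\ref{Brique1} fixes the whole vector $v^{i_0}_t=(\zeta^{i_0,j}_t)_{j\neq i_0}\in\R^{n-1}$ in a single sweep. With only $X^{i_0}$ free, the Karhunen--Lo\`eve coefficients of the bridge are split into $m\geq n+1$ controllable Gaussians $(\xi_j)_{1\leq j\leq m}$, the map from $(\txi-\xi)/2$ to $v^{i_0}_T-v^{i_0}_0$ is the random linear map $\frac{2T}{\pi}R(T)\Sigma^{1/2}$ with $R(T)$ an $(n-1)\times m$ standard Gaussian matrix, and hitting the target amounts to hitting one coordinate of an $m$-dimensional Brownian motion along the random direction $\Sigma^{-1/2}w/\|\Sigma^{-1/2}w\|_2$, where $w=R(T)^t(R(T)R(T)^t)^{-1}v$. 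The failure probability then hinges on moment bounds for $\|\Sigma^{-1/2}w\|_2$, i.e.\ on the inverse Wishart distribution of $R(T)R(T)^t$; this is where the factor $m/\sqrt{m-n}$ and hence $b_n=2\sqrt{2\pi n}$ come from (taking $m=2n$), and $(n-1)$ of these sweeps produce $\beta_n=(n-1)^{3/2}b_n$. Your plan never introduces this multi-mode construction nor the Wishart estimate, which are the essential new ingredients beyond $\hh$; without them the cross-coupling you correctly identify remains unhandled. (Minor: the skew normal form has $\lfloor n/2\rfloor$ blocks, not $n-1$; the paper uses this only as an optional improvement of the constant, not as the core device.)
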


{\begin{NB}
Denote $\bbb_t=(X_t,z_t)$ and $\tbbb_t=(\tX_t,\tz_t)$.
For all $1\leq i,j \leq n$ such that $i\neq j$, $(B_t^{i,j})_t:=(X_t^i,X_t^j,z_t^{i,j})_t$ (resp. $(\tilde{B}_t^{i,j})_t:=(\tX_t^i,\tX_t^j,\tz_t^{i,j})_t$) is a Brownian motion on the Heisenberg group. Using \emph{Theorem 3.6} from~\cite{banerjee2017coupling}, {for $t$ large enough}, there exists $C_1>0$ such that:
\begin{equation}
d_{TV}\left(\mathcal{L}(\bbb_t^{i,j}),\mathcal{L}(\tbbb_t^{i,j})\right)\geq C_1\left(\frac{\sqrt{(x_i-\tx_i)^2+(x_j-\tx_j)^2}}{\sqrt{t}}\mathbb{1}_{\{(x_i,x_j)\neq(\tx_i,\tx_j)\}}+\frac{|\zeta_{i,j}|}{t}\mathbb{1}_{\{\zeta_{i,j}\neq 0\}}\right).
\end{equation}
In particular, $C_1$ does not depend on the starting points $g$, $\tg$. Thus, {for $t$ large enough}, there exists a constant $C_2$ (not depending on $g$, $\tg$) such that:
\begin{align}
d_{TV}\left(\mathcal{L}(\bbb_t),\mathcal{L}(\tbbb_t)\right)&\geq \max\limits_{1\leq i<j\leq n} d_{TV}\left(\mathcal{L}(\bbb_t^{i,j}),\mathcal{L}(\tbbb_t^{i,j})\right)\notag\\
&\geq C_2\left(\frac{\|x-\tx\|_2}{\sqrt{t}}\mathbb{1}_{\{x\neq \tx\}}+\frac{\|\zeta\|_2}{t}\mathbb{1}_{\{x=\tx\}}\right).
\end{align}
The coupling from Theorem \ref{thm: successfulGn} is efficient on $\Ge_n$.
\end{NB}}

Compared to the strategy used for the Heisenberg group ($n=2$) in \cite{banerjee2017coupling}, the main difficulty for $n\geq 3$ is to deal with $\frac{n(n-1)}{2}$ Lévy areas instead of just one. From a quick look, there is no reason that moving one area $z_t^{i,j}$ will not impact another one. 
As in \cite{banerjee2017coupling}, the first step of this strategy is to apply a reflection coupling on the horizontal parts $(X_t,\tX_t)_t$ until the Brownian motions are in the same fiber. The second step consists in looking at the decomposition of the Brownian motions $(X_t)_t$ and $(\tX_t)_t$ using the Karhunen-Loève expansion of the Brownian bridge and coupling two by two the Gaussian coefficients appearing with this expansion. We then act on the skew-symmetric matrices $(z_t)_t$ and $(\tz_t)$ to couple the matrices one line after another. In particular we use properties of the inverse Wishart distribution.

A first corollary of Theorem \ref{thm: successfulGn} is an estimate of the total variation distance between the laws of the Brownian motions (Corollary \ref{Cor: dtvGnSucces}). We also obtain the existence of a successful coupling (see Corollary \ref{cor: couplingAllHomogeneous}) and an estimate of the Total Variation distance (see Corollary \ref{Cor: dtvGnSucces2}) for all homogeneous Carnot groups by using lifts. 
As for the Heisenberg group, using \eqref{eq: Chap1SemigpIn}, we also obtain results for the horizontal and vertical gradient of the heat semi group (see Corollary \ref{cor: horGradGnSucces} and Corollary \ref{cor: vertGradGnSucces}).

The second main result (see Theorem \ref{thm: sortieDomaine}) is an estimate of $\pr\left(\tau>\tau_{D}(\bbb)\wedge\tau_{D}(\tbbb)\right)$,  $\tau_D(\bbb)$ (resp. $\tau_D(\tbbb)$) denoting the first exit time of a domain $D\subset\Ge_n$ for the process $(\bbb_t)_t$ (resp. $(\tbbb_t)_t$). Although the strategy is not so different from the one developed in \cite{banerjee2017coupling}, some parts of the proof need extra care.
On $\Ge_n$ we then directly obtain estimates similar than (\ref{eq: Chap1 GradientDomainHeisenberg}) for the horizontal gradient of harmonic functions (Corollary \ref{cor: osculateur1} and Corollary \ref{cor: osculateur2}) as well as the Cheng Yau inequality (Corollary \ref{cor: cheng-Yau}). We also show how to extend these results to all homogeneous step $2$ Carnot groups. 

Note that the Cheng Yau inequality has been proven for all Carnot groups by Baudoin, Gordina and Mariano in~\cite{baudoin2019cheng} by using analytic methods.

\begin{NB}
{In a work in collaboration with Marc Arnaudon, Michel Bonnefont and Delphine Féral~\cite{OneSweep}, we propose two other strategies for a non co-adapted coupling of Brownian motion on $\Ge_n$.
\begin{itemize}
    \item The first one define a coupling of Brownian motion but only on a finite (and deterministic) interval of time $[0,T]$. As for the coupling of Theorem \ref{thm: successfulGn}, the strategy is using an expansion of the Brownian motion. There are however three major differences. The first one is that the couplings of the Gaussian coefficients appearing in the expansion are chosen such that only $\pr(\bbb_T\neq\tbbb_T)$ has a good order, i.e., it satisfies
    \begin{equation}\label{eq: OnesweepInequality}
        \pr(\bbb_T\neq\tbbb_T)\leq C_1(n)\frac{\|x-\tx\|_2}{\sqrt{t}}+C_2(n)\frac{\|\zeta\|_2}{T}.
    \end{equation} The second one is that the expansion of the Brownian motion in~\cite{OneSweep} uses Legendre polynomials. The third one is that the horizontal and vertical parts of the processes are considered together in a unique step. As $d_{TV}(\bbb_T,\tbbb_T)\leq\pr(\bbb_T\neq\bbb_T')$, this coupling provides the same inequalities than in Corollaries \ref{Cor: dtvGnSucces}, \ref{Cor: dtvGnSucces2}, \ref{cor: horGradGnSucces} and \ref{cor: vertGradGnSucces}. Moreover, in \cite{OneSweep}, the constants $C_1(n)$ and $C_2(n)$ are improved. \
    
    It is however important to notice that the coupling from \cite{OneSweep} is not successful. 
    As we cannot compare the first coupling time to the first exit time from a domain, this coupling from \cite{OneSweep} does not provide results comparable to Corollaries \ref{cor: osculateur1}, \ref{cor: osculateur2} and \ref{cor: cheng-Yau}. It seems that a simple repetition of the strategy with geometrically growing steps, using reflection coupling and concatenation of Brownian motions as in Proposition \ref{Brique1}, should also lead to a successful coupling at least when the two processes are starting from the same fiber. Nevertheless it may be more difficult to compare the first coupling time with the first exit time from a domain.
    \item The second strategy developed in \cite{OneSweep} is not exactly a coupling of Brownian motions. The idea is to couple two random variables $\bbb_T$ and $\tbbb_T$ such that $\bbb_T$ has the distribution of a Brownian motion at time $T$ (starting from $g$) whereas, only under an absolutely continuous change of probability, does $\tbbb_T$ have the distribution of a Brownian motion at time $T$ (starting at $\tg$). This allows us to construct the two variables such that $\tbbb_T$ is always equal to $\bbb_T$. Girsanov Theorem permits then to measure the distance between the corresponding semi groups. The applications as well as the method are then different than the one developed in the present paper.
\end{itemize}}

\end{NB}
\subsection{Organisation of the paper}
In Section \ref{Section: free Carnot}, we give some preliminaries about the SubRiemannian structure of the free, step $2$ Carnot groups $\Ge_n$ and its associated subRiemannian Brownian motions. In Section \ref{Section:Brownian BridgeGn} we describe the coupling strategy and give the proof of Theorem \ref{thm: successfulGn}. We also state the Total Variation distance estimate on $\Ge_n$ (Corollary \ref{Cor: dtvGnSucces}). The generalisation to all homogeneous step $2$ Carnot groups and the estimates for the Total Variation distance are dealt with in Section \ref{section: homogeneous}. In Section \ref{sec: coupling Vs Exit}, we compare the first coupling time and the first exit times from a domain (Theorem \ref{thm: sortieDomaine}) for this coupling strategy on free, step $2$ Carnot groups. Finally, we present the gradient inequalities obtained with this coupling in Section \ref{Sec: gradientEstimates}.
\section{The free, step $2$ Carnot groups}\label{Section: free Carnot}
\subsection{SubRiemannian structure on the free, step $2$ Carnot groups}\label{subsec: Chap1 Carnot}

For all $a\in\Ge_n$, define the left-translation $\trans_a:=g\in \Ge\mapsto a\star g\in \Ge$. A vector field $\bX$ on $\Ge_n$ is called left-invariant if, for all $g, \ a\in \Ge$ and for every smooth function $f$,
$\bX\left(f\circ \trans_a\right)(g)=\bX\left(f\right)({\trans_a(g)})$.

Consider $(\bX_1,\hdots\bX_n)$ the "horizontal left-invariant vector fields" such that $\bX_i(0)=\partial_{x_i}$ for all $1\leq i\leq n$. The horizontal subbundle $\mathcal{H}:=\Span\{\bar{X}_1,\hdots,\bar{X}_n\}$ satisfies the Hörmander condition, i.e., $(\bX_i)_{1\leq i\leq n}$ and $\left([\bX_i,\bX_j]\right)_{1\leq i<j\leq n}$ generate the tangent bundle $T\Ge_n$ (where $[\cdot,\cdot]$ denotes the usual Lie bracket operation). Using the Chow Theorem, we can then endow $\Ge_n$ with a structure of connected subRiemannian manifold.

We recall that the subRiemannian metric, also called, Carnot Carathéodory distance is defined for all $g,\tg\in \Ge_n$ by 
\begin{equation*}
		d_{cc}(g,\tg):=\inf\limits_{\gamma}L(\gamma)\text{ with }L(\gamma)=\int_I \sqrt{\langle\dot{\gamma}(t),\dot{\gamma}(t)\rangle_{\mathcal{H}_{\gamma(t)}}}dt
	\end{equation*}
	where $\gamma$ ranges over the horizontal curves joining $g$ and $\tg$, i.e.,  the smooth curves $\gamma:I\subset \mathbb{R}\to G$ such that $\dot{\gamma}(t)\in \mathcal{H}_{\gamma(t)}$. Here, for all $g\in \Ge_n$, the inner-product $\langle\cdot,\cdot\rangle_{\mathcal{H}_{g}}$ is defined such that $\left(\bX_1(g),\hdots,\bX_n(g)\right)$ is an orthonormal basis of $\mathcal{H}(g)$.

By construction of the horizontal subbundle, the Carnot Carathéodory distance $d_{cc}$ is left-invariant.
{\begin{NB}\label{NB: distanceHorizontale}
By decomposing any horizontal curve $t\mapsto\gamma(t)$ in $\Ge_n$ into the form $(x(t),z(t))\in\mathbb{R}^n\times\mathfrak{so}(n)$, and by considering explicit representations of the horizontal left-invariant vector fields, it is not difficult to prove that for any $g=(x,z)$, $\tg=(\tx,\tz)\in \Ge_n$:
 \begin{equation}\label{distanceHorizontale}
     \|x-\tx\|_2\leq d_{cc}(g,\tg).
 \end{equation}
\end{NB}}

\subsection{Definition of an equivalent pseudo-distance}\label{subsec: pseudo-distance}

As it can be difficult to work with the Carnot-Carathéodory distance, we introduce an equivalent pseudo-distance. (This pseudo-distance satisfies the properties of positivity and symmetry. The triangular inequality is replaced by a pseudo-triangular inequality.).

For $z=\left(z_{i,j}\right)_{1\leq i<j\leq n}\in\mathfrak{so}(n)$, denote: \begin{equation*}||z||_p:=\left(\sum\limits_{1\leq i<j\leq n}|z_{i,j}|^p\right)^{\frac{1}{p}}.\end{equation*} In fact, for all $z\in\mathfrak{so}(n)$, $\|z\|_2$ is, up to the multiplicative constant ${\sqrt{2}}$, the usual Hilbert-Schmidt norm of $z$. Note that, for any $x,\tx\in \mathbb{R}^n$, we have $\|x\symp \tx\|_2\leq \|x\|_2\|\tx\|_2$.

Define the pseudo-norm $\|\cdot\|_{\Ge_n}$:  
  \begin{equation}\label{eq: Chap1HomogeneousNorm}
   	||(x,z)||_{\Ge_n}:=\sqrt{||x||_2^2+\|z\|_2} \text{ for all } (x,z)\in \mathbb{R}^n\times\mathfrak{so}(n).
\end{equation}

Here we make the choice of quadratic norms. This way, $\|\cdot\|_{\Ge_n}$ is invariant under any change of basis on $\mathbb{R}^n$ in the sense that for any $P\in\mathcal{O}(n)$, $\left\|\left(Px,PzP^{t}\right)\right\|_{\Ge_n}=\left\|\left(x,z\right)\right\|_{\Ge_n}$.

From this pseudo-norm we can define the pseudo-distance 
\begin{equation}
\Big{\{}\begin{array}{ccc}
		\bd:\Ge_n\times \Ge_n&\to &\mathbb{R}\\
		(g,\tg)&\mapsto &\|g^{-1}\tg\|_{\Ge_n}
	\end{array}.
	\end{equation}
In particular, it satisfies the following pseudo-triangular inequality for all $(x,z)$, $(\tx,\tz)$, $(y,v)\in \Ge_n$:
	\begin{equation}\label{eq: pseudo-triangle}
	    	\bd\left((x,z),(\tx,\tz)\right)\leq {\frac{3}{2}}\big(\bd\left((x,z),(y,v)\right)+\bd\left((y,v),(\tx,\tz)\right)\big)
	\end{equation}



As $\|\cdot\|_{\Ge_n}$ and $g\in \Ge_n\mapsto d_{cc}(0,g)\in\mathbb{R}^+$ define two homogeneous norms on $\Ge_n$, the pseudo-distance $\bd$ and the subRiemannian distance $d_{cc}$ are equivalent (see \emph{Proposition 5.1.4} in \cite{bonfiglioli2007stratified}):
there exist {$m_1(n)$} and {$m_2(n)$} two positive constant such that for all $g,\tg\in \Ge_n$:
\begin{equation}\label{eq: equivSeminorme}
{m_1(n)}\bd(g, \tg)\leq d_{cc}(g,\tg)\leq {m_2(n)}\bd(g, \tg).
\end{equation}

{\begin{NB}
As far as the author knows, estimates for $m_1(n)$ and $m_2(n)$ do not seem to be commonly computed. However, they will be used several times in the corollaries of Sections \ref{Sec: couplingAllHomogeneous} and \ref{Sec: gradientEstimates}, in particular when generalizing our results to homogeneous Carnot groups. In what follows we try to gather some results that can help to compute these estimates.
\begin{itemize}

\item For some particular values of $g$ and $\tilde{g}$, the subRiemannian distance can be explicitly computed. For $x\in\mathbb{R}^n$, it is well known that:
\begin{equation}\label{eq: disthor}
 d_{cc}(0,(x,0))=\|x\|_2.
 \end{equation}
 For $z\in\mathfrak{so}(n)$, denote by $\lambda_1>\hdots>\lambda_r$ the absolute values of the non zero eigenvalues of $z$. \emph{Proposition 13.19} from~\cite{BoscainAgrachev} states:
\begin{equation}\label{eq: distvert}
    d_{cc}(0,(0,z))^2=4\pi \sum\limits_{i=1}^r i\lambda_i.
\end{equation}
From \eqref{eq: distvert}, using the fact that $\Tr(z^tz)=2\|z\|_2^2$ we get the equivalence relation:
\begin{equation}\label{eq: distvert2}
	  4\sqrt{2}\pi\|z\|_2\leq d_{cc}(0,(0,z))^2\leq {\frac{4\pi}{\sqrt{3}} \sqrt{n(n+1)(2n+1)}\|z\|_2}.
	\end{equation} 

\item  Using \eqref{eq: disthor} and \eqref{eq: distvert2}, we can obtain an estimate for the constant $m_2(n)$ intervening in \eqref{eq: equivSeminorme}:
\begin{equation}\label{equib bd dcc}
	  d_{cc}(g,\tg)\leq d_{cc}(0,(\tx-x,0))+d_{cc}(0,(0,\zeta))
	  \leq 2\sqrt{2\pi}\left(\frac{n(n+1)(2n+1)}{3}\right)^{\frac{1}{4}}
	  \bd(g,\tg)
	\end{equation}
	where $\zeta\in\mathfrak{so}(n)$ such that $g^{-1}\tg=(\tx-x,\zeta)$.
	Thus: 
	\begin{equation}
	m_2(n)\leq  2\sqrt{2\pi}\left(\frac{n(n+1)(2n+1)}{3}\right)^{\frac{1}{4}}.
	\end{equation}
	
\item It seems to be more difficult to obtain estimates for the constant $m_1(n)$. By considering projections of $\Ge_n$ on the Heisenberg group, we can however state that
\begin{equation}
    m_1(2)\sqrt{\frac{2}{n(n-1)}}\leq m_1(n).
\end{equation}
Indeed, let $\gamma:t\in[0,1]\mapsto (x(t),z(t))\in\Ge_n$ be an horizontal curve joining $g$ and $\tilde{g}$. For any $1\leq i<j\leq n$, $\gamma^{i,j}(t):=\begin{pmatrix}
x_i(t)\\
x_j(t)\\
z_{i,j}(t)
\end{pmatrix}$ is an horizontal curve on the Heisenberg group $\hh$ and $L(\gamma^{i,j})\leq L(\gamma)$. Thus, $d_{cc}\left((x_i,x_j,z_{i,j}),(\tx_i,\tx_j,\tz_{i,j})\right)\leq d_{cc}(g,\tg)$. Using the equivalence relation \eqref{eq: equivSeminorme} on the Heisenberg group ($n=2$), we get:
\begin{equation}
    m_1(2)\sqrt{(n-1)\|\tx-x\|_2^2+\|\zeta\|_2}\leq \sqrt{ \frac{n(n-1)}{2}}d_{cc}(g,\tg).
\end{equation}

\end{itemize}
\end{NB}}

 \subsection{Subelliptic Brownian motion}
We now introduce the sub-Laplacian operator: \begin{equation}\label{eq: Chap1 GenInf}
 	L:=\frac{1}{2}\sum_{1}^{n}\bar{X_i}^2.
	\end{equation}
	Because the Hörmander condition is satisfied, this operator is hypoelliptic. It is also a diffusion operator.
	\begin{defi}
	The {subelliptic Brownian motion} on $\Ge_n$ is defined as the diffusion process whose infinitesimal generator is $L$. 
	It satisfies the stochastic differential equation:
\begin{equation}\label{StratoMBCarnot}
    d\bbb_t=\sum\limits_{i=1}^{n}\bX_i(\bbb_t)\circ dB_t^i
\end{equation} where $\circ d$ denotes the Stratonovitch differential, $\left(B_t^1,\hdots,B_t^n\right)$ a Brownian motion on $\mathbb{R}^n$ and $(\bX_i)_{1\leq i\leq n}$ still denote the horizontal left-invariant vector fields on $\Ge_n$.

 In particular, the subelliptic Brownian motion starting from $g=(x,z)\in \Ge_n$ can be written $(\bbb_t)_t:=(X_t,z_t)_t$ with $(X_t)_t$ a Brownian motion starting at $x$ on $\mathbb{R}^n$ and $z_t:=z+\frac{1}{2}\int_0^t X_s\symp dX_s$. In fact, 
 \begin{equation*}
 \mathcal{A}_t^{i,j}:=z_t^{i,j}-z_0^{i,j}=\int_0^t X_s^idX_s^j-\int_0^t X_s^j dX_s^j
 \end{equation*}
 is the Lévy area between $(X_s^i)_{s\leq t}$ and $(X_s^j)_{s\leq t}$.
 \end{defi}
 \begin{NB}\label{RemChap1 Heisenberg}
	For $n=2$, $\Ge_2$ can be identified with the well known Heisenberg group $\hh=(\mathbb{R}^3,\star)$ where $(x_1,x_2,z)\star(\tx_1,\tx_2,\tz)=(x_1+\tx_1,x_2+\tx_2,z+\tz+\frac{1}{2}(x_1\tx_2-\tx_1x_2))$.
	
	On $\hh$, the Brownian motion can be written $(\bbb_t)_t=(X_t,z_t)_t$ with $(X_t)_t$ a Brownian motion on $\mathbb{R}^2$ and $z_t-z_0$ the signed area swept by $(X_s)_{s\leq t}$ with respect to the origin of the coordinate system, i.e., the Lévy area $\int_0^t X_s^1dX_s^2-\int_0^t X_s^2 dX_s^1$. 
	
\end{NB}
 Denote by $(P_t)_t$ the {heat semi-group}, i.e., the semi-group associated to $L$. For all $g\in\Ge_n$, consider a Brownian motion $(\bbb_t)_t$ starting at $g$. Then for all bounded measurable function $f$ on $\Ge_n$:
	\begin{equation}
	P_tf(g)=\esp[f(\bbb_t)].
	\end{equation}

\section{The non co-adapted coupling strategy for $\Ge_n$}\label{Section:Brownian BridgeGn}
In this section we prove Theorem \ref{thm: successfulGn}. Set $n\geq 2$. In the remainder of the article we use the following notations. Let $(\bbb_t)_t=(X_t,z_t)_t$ and $(\tbbb_t)_t=(\tX_t,\tz_t)_t$ be two Brownian motions on $\Ge_n$ starting at $g=(x,z)$ and $\tg=(\tx,\tz)$.
We have $\bbb_t^{-1}\tbbb_t=(\tX_t-X_t,\zeta_t)$ with $\zeta_t:=\tz_t-z_t-\frac{1}{2}X_t\symp \tX_t\in\mathfrak{so}(n)$. In particular, for any $1\leq i\neq j\leq n$, using the Itô formula, we can write:
\begin{align}\label{eq: coordonneesZeta}
d\zeta_t^{i,j}&=\frac{1}{2}\tX_t^id\tX_t^j-\frac{1}{2}\tX_t^jd\tX_t^i-\left(\frac{1}{2}X_t^idX_t^j-\frac{1}{2}X_t^jdX_t^i\right)-\frac{1}{2}d\left(X_t^i\tX^j_t-X_t^j\tX_t^i\right)\notag\\
&=\tX_t^id\tX_t^j-X_t^idX_t^j+\frac{1}{2}d\left(\left(\tX^i_t+X_t^i\right)\left(X_t^j-\tX_t^j\right)\right).
\end{align}
Note that if $X_T=\tX_T$, $\zeta_T=\tz_T-z_T$.

\subsection{The Brownian motions start from the same fiber}\label{subsec: memefibreGn}
We first describe the strategy for the case when $x=\tx$, i.e., when the Brownian motions start from the same fiber. The main idea is to work on the skew-symmetric matrix $(\zeta_t)_t$ line by line. For each line we will apply the following Proposition:
\begin{prop}\label{Brique1}
	Let $g=(x,z)$, $\tilde{g}=(\tilde{x},\tilde{z})$ be two points in $\Ge_n$ such that $x=\tilde{x}$.
Let $1\leq i_0\leq n-1$. For two Brownian motions $\bbb_t$ and $\tbbb_t$, denote $v^{i_0}_t:=\left(\zeta^{i_0,i}_t\right)_{i\in\{1,\hdots,n\}\setminus\{ i_0\}}\in\mathbb{R}^{n-1}$ and \begin{equation*}\bar{\tau}_{i_0}:=\inf\{t\geq0 \ | \ X_t=\tX_t\text{ and } z_t^{i_0,i}=\tilde{z}_t^{i_0,i} \ \forall 1\leq i\leq n \}.\end{equation*}
 There exists a non co-adapted coupling $(\bbb_t,\tilde{\bbb}_t)_t$ such that $\bar{\tau}_{i_0}$ is a.s. finite and satisfies for all $t\geq \|v^{i_0}_0\|_2$:
\begin{equation}\label{eq: InegaliteBrique}
\pr\left(\bar{\tau}_{i_0}>t\right)\leq\frac{\|v^{i_0}_0\|_2}{t}\times b_n
\end{equation}
{with $b_n={2\sqrt{\pi n}}
$.}
Moreover, for $l,i\in\{1,\hdots n\}\setminus\{i_0\}$, we have $\zeta_t^{l,i}=\zeta_0^{l,i}$ for all $t\in[0,\bar{\tau}_{i_0}]$.
\end{prop}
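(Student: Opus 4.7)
Since $x=\tx$, the horizontal components coincide at time $0$. My plan is to use a \emph{synchronous} coupling on all horizontal components except $i_0$: set $\tX^k_t=X^k_t$ for every $k\neq i_0$ and every $t\ge 0$. Substituting into \eqref{eq: coordonneesZeta}, one checks immediately that $d\zeta^{i,j}_t=0$ whenever both $i$ and $j$ differ from $i_0$, which already gives the ``moreover'' part of the statement. For $j\neq i_0$, setting $Y_t:=\tX^{i_0}_t-X^{i_0}_t$, the formula collapses to
\begin{equation*}
d\zeta^{i_0,j}_t \;=\; Y_t\, dX^j_t,\qquad j\neq i_0.
\end{equation*}
The task therefore reduces to coupling the scalar pair $(X^{i_0},\tX^{i_0})$, started from a common point, so that at some a.s.\ finite time $\bar\tau_{i_0}$ one has $Y_{\bar\tau_{i_0}}=0$ and $\int_0^{\bar\tau_{i_0}} Y_s\,dX^j_s=-\zeta^{i_0,j}_0$ for every $j\neq i_0$.

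\textbf{Karhunen-Loève construction on $[0,T]$.}
Fix a deterministic horizon $T>0$ of order $\|v^{i_0}_0\|_2$. On $[0,T]$, I would expand $X^{i_0}$ and $\tX^{i_0}$ in the sine basis adapted to the Brownian bridge: write $X^{i_0}_t-X^{i_0}_0-\tfrac{t}{T}(X^{i_0}_T-X^{i_0}_0)=\sum_{k\ge 1}\xi_k\phi_k(t)$ with $\phi_k(t)=\frac{\sqrt{2T}}{k\pi}\sin(k\pi t/T)$ and $(\xi_k)$ i.i.d.\ $\mathcal{N}(0,1)$, and analogously for $\tX^{i_0}$ with coefficients $(\tilde\xi_k)$. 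First I would couple the two endpoints so that $\tX^{i_0}_T=X^{i_0}_T$, which forces $Y_T=0$. With $\Delta_k:=\tilde\xi_k-\xi_k$, the remaining area constraints become
\begin{equation*}
\sum_{k\ge 1}\Delta_k\,\frac{\sqrt{2T}}{k\pi}\,\eta^j_k \;=\; -\zeta^{i_0,j}_0,\qquad j\neq i_0,
\end{equation*}
where the Wiener integrals $\eta^j_k:=\int_0^T\sin(k\pi s/T)\,dX^j_s$ are jointly independent $\mathcal{N}(0,T/2)$ across both $k$ and $j$. Declaring $\tilde\xi_k=\xi_k$ for $k\ge n$, I would solve this $(n-1)\times(n-1)$ linear system in the unknowns $\Delta_1,\ldots,\Delta_{n-1}$.

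\textbf{Inverse Wishart control and conclusion.}
The system matrix $M=\bigl(\frac{\sqrt{2T}}{k\pi}\,\eta^j_k\bigr)_{1\le k\le n-1,\ j\neq i_0}$ is a Gaussian matrix independent of everything else, so the squared length $\|M^{-1}v^{i_0}_0\|_2^2$ follows an inverse Wishart law for which sharp $L^1$ bounds are available. This yields explicit control on the adjustments $\Delta_k$ needed to kill the $n-1$ target areas simultaneously. The resulting $\tX^{i_0}$ is a measurable functional of $X^{i_0}$ and of $(X^j)_{j\neq i_0}$ over the whole $[0,T]$, which makes the coupling explicitly non-co-adapted; a Gaussian conditioning argument in the spirit of \cite{banerjee2017coupling} should then show that $\tX^{i_0}$ is nevertheless a Brownian motion in its own filtration. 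Tuning $T$ proportional to $\|v^{i_0}_0\|_2$ and, if necessary, iterating the construction on successive intervals $[T,2T], [2T,4T],\ldots$ whenever the sampled adjustments violate the normalisation, produces a stopping time $\bar\tau_{i_0}$ with $\esp[\bar\tau_{i_0}]\le b_n\|v^{i_0}_0\|_2$; Markov's inequality then gives the announced tail estimate for $t\ge \|v^{i_0}_0\|_2$.

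\textbf{Main obstacle.}
The delicate point is preserving the Brownian motion law of $\tX^{i_0}$ while simultaneously enforcing $n-1$ Lévy area constraints. The single-constraint case on $\hh$ in \cite{banerjee2017coupling} relies on a scalar sign-flip trick that does not generalise directly; here the extra factor $\sqrt{n}$ in the constant $b_n=2\sqrt{2\pi n}$ is exactly the price, via the inverse Wishart moment, of inverting an $(n-1)\times(n-1)$ Gaussian matrix rather than a scalar.
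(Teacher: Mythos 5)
Your high-level plan matches the paper's (synchronous coupling on the non-$i_0$ horizontal coordinates, Karhunen--Lo\`eve expansion of the $i_0$-th coordinate, solving a linear system for the adjustments $\Delta_k$, and iterating over geometrically growing intervals). However, two key steps as written would fail.

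\textbf{(1) A square Gaussian system has infinite inverse moments.} You zero out $\Delta_k$ for $k\geq n$ and invert a square $(n-1)\times(n-1)$ Gaussian matrix $M$. But $\|M^{-1}v^{i_0}_0\|_2$ has an infinite first moment: as noted in Remark~\ref{NB: ComparaisonHeisenberg} of the paper, with a square system $1/\|M^{-1}v\|_2^2$ is a $\chi^2_1$ variable, so $\esp\bigl[\|M^{-1}v\|_2\bigr]=\esp\bigl[(\chi^2_1)^{-1/2}\bigr]=\infty$. Your ``sharp $L^1$ bounds for the inverse Wishart'' only hold when the number of degrees of freedom exceeds the dimension by at least two; a square sample covariance sits exactly on the boundary where the mean of the inverse blows up. The paper deals with this by keeping $m\geq n+1$ modes free (an underdetermined system with least-norm solution $w=R^t(RR^t)^{-1}v$, where $RR^t$ is Wishart with $m\geq n+1$ d.o.f.), which gives $\esp[\|w\|_2]\leq 1/\sqrt{m-n}<\infty$; optimising $m=2n$ is what produces $b_n=2\sqrt{2\pi n}$. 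With $m=n-1$ the construction can be salvaged, but only by pooling at least two independent iterations before taking the inverse square root (and it yields the different constant $2\pi(n-1)$, which matches $b_n$ only at $n=2$).

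\textbf{(2) Markov's inequality cannot produce a $1/t$ tail here.} You propose to first bound $\esp[\bar\tau_{i_0}]\leq b_n\|v^{i_0}_0\|_2$ and then apply Markov. But $\esp[\bar\tau_{i_0}]=\infty$ for \emph{any} coupling: the coupling inequality~\eqref{eq: Chap1Aldous} forces $\pr(\bar\tau_{i_0}>t)\geq d_{TV}\bigl(\mathcal L(\bbb_t),\mathcal L(\tbbb_t)\bigr)$, and on the Heisenberg sub-block $(X^{i_0},X^{i},z^{i_0,i})$ this total variation distance decays exactly like $C/t$ (Theorem 3.6 of \cite{banerjee2017coupling}), so $\int^\infty\pr(\bar\tau_{i_0}>t)\,dt$ diverges. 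The paper's argument avoids this entirely: it represents the running target area as a time-changed scalar Brownian motion $M_t=\|v^{i_0}_0\|_2+\Bet_{S(t)}$, so that $\{\bar\tau_{i_0}>t_N\}=\{D_{-\|v^{i_0}_0\|_2}>S(t_N)\}$, then applies the hitting-time estimate of Lemma~\ref{hittingTime} and bounds $\esp[1/\sqrt{S(t_N)}]$ via the Wishart moment. This bounds the tail directly, without ever needing $\bar\tau_{i_0}$ to be integrable. Without a replacement for your Markov step, the tail estimate~\eqref{eq: InegaliteBrique} does not follow.

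As a smaller point, the sign-flip mechanism of \cite{banerjee2017coupling} does in fact generalise: the paper reflects the first coordinate of an auxiliary $m$-dimensional Brownian motion $W$ at a hitting time $\sigma$, where $W$ is expressed in an orthonormal frame whose first vector is $\Sigma^{-1/2}w/\|\Sigma^{-1/2}w\|_2$ (the direction required to cancel the area vector). This is the concrete realisation of your vague ``Gaussian conditioning argument'' and is needed to make $\tX^{i_0}$ a genuine Brownian motion while hitting the target.
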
	
\begin{NB}
As $(\zeta_t)_t$ is skew-symmetric, at time $\bar{\tau}_{i_0}$ the $i_0^{th}$ line \textbf{and} the $i_0^{th}$ column are null.
\end{NB}
\begin{NB}\label{Rem: Brique}
As $b_n>1$
for all $n\geq 2$, Inequality (\ref{eq: InegaliteBrique}) can be replaced by 
\begin{equation*}
\pr\left(\bar{\tau}_{i_0}>t\right)\leq\left(\frac{\|v^{i_0}_0\|_2}{t}\times b_n\right)\wedge 1\text{ for all }t\geq 0.
\end{equation*}

\end{NB}
	
To prove Proposition \ref{Brique1}, we recall the well known estimate of the first hitting time of a standard Brownian motion:
\begin{lemme}\label{hittingTime}
	Set $a\in\mathbb{R}$. Let $(W_t)_t$ be a standard Brownian motion on $\mathbb{R}$. Denote by $D_a:=\inf\{t >0\ | \ W_t=a\}$ the first hitting time of $a$ by $(W_t)_t$. Then for all $t>0$:
	\begin{equation*}
	\pr(D_a>t)\leq \left(\sqrt{\frac{2}{\pi}}\frac{|a|}{\sqrt{ t}}\right)\wedge 1.
	\end{equation*}
\end{lemme}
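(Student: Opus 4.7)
The plan is to follow the standard route: reduce to the case $a\geq 0$ by symmetry, apply the reflection principle to express $\pr(D_a>t)$ in terms of the marginal law of $W_t$, and then bound the resulting Gaussian integral by the maximum of its density.

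First, since $(-W_t)_t$ is also a standard Brownian motion, $D_a$ and $D_{-a}$ have the same law, so one may assume $a\geq 0$; the case $a=0$ being trivial, fix $a>0$. The reflection principle gives $\pr(D_a\leq t)=2\pr(W_t\geq a)$, and consequently, using the symmetry of $W_t\sim\mathcal{N}(0,t)$,
\begin{equation*}
\pr(D_a>t)=1-2\pr(W_t\geq a)=\pr(-a<W_t<a)=2\int_0^a\frac{1}{\sqrt{2\pi t}}e^{-x^2/(2t)}\,dx.
\end{equation*}

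Bounding the Gaussian density by its value at $0$, namely $\frac{1}{\sqrt{2\pi t}}$, yields
\begin{equation*}
\pr(D_a>t)\leq \frac{2a}{\sqrt{2\pi t}}=\sqrt{\frac{2}{\pi}}\,\frac{|a|}{\sqrt{t}}.
\end{equation*}
Combined with the trivial inequality $\pr(D_a>t)\leq 1$, this produces the stated bound with the $\wedge 1$ term. There is no real obstacle here: the lemma is a textbook consequence of the reflection principle, and the $\wedge 1$ simply records the obvious probability bound, which becomes the relevant one precisely when $|a|/\sqrt{t}$ is not small.
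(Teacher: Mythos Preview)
Your proof is correct and entirely standard; the paper itself does not supply a proof of this lemma, merely recalling it as a ``well known estimate of the first hitting time of a standard Brownian motion'' before using it. Your reflection-principle argument is exactly the textbook route one would expect.
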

	
\begin{proof}[Proof of Proposition \ref{Brique1}]
	For the proof we can suppose without lost of generality that $i_0=1$. Then $v_t^1=\begin{pmatrix}
\zeta_t^{1,2}\\
\vdots\\
\zeta_t^{1,n}
\end{pmatrix}$. Let $T>0$ and $\ve:=\frac{v_0^1}{\|v_0^1\|_2}$. 

We first choose $X_t^i=\tilde{X}^i_t$ for $2\leq i \leq n$. In particular $\zeta_t^{l,i}$ stays constant for $2\leq l<i \leq n$.

We now define the coupling $({X}_t^1,\tX_t^1)_t$. Let $(\xi_j)_{j\geq 1}$ (resp. $(\txi_j)_{j\geq 1}$) be a sequence of independent variables with the same distribution $\mathcal{N}(0,1)$. Using the Karhunen-Loève expansion we can define two Brownian bridges $(\bbr_t)_{0\leq t\leq T}$ and $(\tbbr_t)_{0\leq t\leq T}$ on the interval of time $[0,T]$:
    \begin{equation}\label{eq: Chap1Karhunen Loeve}
    \bbr_t=\sqrt{T}\sum\limits_{j\geq1}\xi_j\frac{\sqrt{2}}{j\pi}\sin\left(\frac{j\pi t}{T}\right)
    \text{ and } {\tbbr}_t=\sqrt{T}\sum\limits_{j\geq1}\tilde{\xi}_j\frac{\sqrt{2}}{j\pi}\sin\left(\frac{j\pi t}{T}\right).
    \end{equation}
    
    Then, for $t\in[0,T]$
    \begin{equation}\label{eq: Chap1Bridges}
    X_t^1:=x_1+\bbr_t+\frac{t}{\sqrt{T}}\xi_0 \text{ and } \tX_t^1:=\tx_1+{\tbbr}_t+\frac{t}{\sqrt{T}}\tilde{\xi}_0
    \end{equation}
    define two Brownian motions starting from $x_1$ and $\tx_1$ respectively.
    In particular, we have for $t\in[0,T]$:
    \begin{equation*}
		\tX_t^1-{X}_t^1=\sum\limits_{j\geq 1}\frac{\tilde{\xi}_j-\xi_j}{2}\times 2\frac{\sqrt{2T}}{j\pi}\sin\left(\frac{j\pi t}{T}\right)+\left(\txi_0-\xi_0\right)\frac{t}{\sqrt{T}}.
		\end{equation*}

	Let {$m\geq n+1$}. We choose $\xi_j=\tilde{\xi}_j$ for $j=0$ and for {$j\geq m$}. With these choices, for all $2\leq i\leq n$, using (\ref{eq: coordonneesZeta}) we get:
	\begin{equation*}
	\zeta_t^{1,i}=\zeta_0^{1,i}+\int_0^t\left(\tX_s^1-X_s^1\right)dX_s^i=\zeta_0^{1,i}+\sum\limits_{j=1}^{{m}}\frac{\tilde{\xi}_j-\xi_j}{2}K_{(i-1),j}(t)
	\end{equation*}
	with: $K_{(i-1),j}(t):=2\frac{\sqrt{2T}}{j\pi}\int_0^t\sin\left(\frac{j\pi s}{T}\right)dX_s^i$. 
		Note that, as we consider the coordinates $\zeta_t^{1,i}$ for ${2\leq i\leq n}$, we introduce a shift of the index $i$ to define $K_{(i-1),j}(t)$. This will simplify the notation in the remainder of the proof.
		For $j,l>0$, we have: \begin{align*}
	\esp\left[\int_0^T\sin\left(\frac{j\pi s}{T}\right)dX_s^i\int_0^T\sin\left(\frac{l\pi s}{T}\right)dX_s^i\right]&=\int_0^T\sin\left(\frac{j\pi s}{T}\right)\sin\left(\frac{l\pi s}{T}\right)ds\\
	&=\frac{1}{2}\int_0^T\left(\cos\left(\frac{(j-l)\pi s}{T}\right)-\cos\left(\frac{(j+l)\pi s}{T}\right)\right)ds\\
	&=\begin{cases}
	0 \ &\text{if}\ j\neq l\\
	\frac{T}{2} \ &\text{if}\ j=l.
	\end{cases}
	\end{align*}
 Thus, we have $K_{(i),j}(t)=\frac{2T}{j\pi}G_{(i),j}(t)$ for all {$1\leq i\leq n-1$, $1\leq j\leq m$} with $(G_{(i),j}(T))_{i,j}$ some independent standard Gaussian variables.

	{Denote $\Sigma:=\diag\left(1,\frac{1}{4},\hdots,\frac{1}{m^2}\right)$ the $m\times m$ diagonal matrix with $\left(1,\frac{1}{4},\hdots,\frac{1}{m^2}\right)$ on the diagonal and $R(T)=\left(G_{(i),j}(T)\right)_{\substack{ 1\leq i\leq n-1\\ 1\leq j\leq m}}$. In particular, denoting $\xi:=\left(\xi_j\right)_{1\leq j\leq m}\in\mathbb{R}^{m}$ and $\txi:=(\txi_j)_{1\leq j\leq m}\in\mathbb{R}^{m}$, we have:
	\begin{equation*}
	v^{1}_t=v^{1}_0+\frac{2T}{\pi}R(t)\Sigma^{\frac{1}{2}}\frac{\tilde{\xi}-\xi}{2}.
	\end{equation*}
	Note that $R(T)$ is measurable with respect to the $\sigma$-field $\sigma\left(X_t^j\, ,0\leq t\leq T\,, 2\leq j\leq n\right)$ and thus will be independent of $(X_t^1)_t$ and $(\tX_t^1)_t$.}

	{ The distribution of $R(T)R(T)^t$ is known: it is a $\mathcal{W}_m(n-1,I_{m})$  standard Wishart distribution with $m\geq n+1$ degrees of freedom\footnote{{Let A be a $n\times n$ random matrix. Using the notations of~\cite{Muirhead}, we say that $A$ has the Wishart distribution with $m$ degrees of freedom and with the covariance matrix $\Xi$ if it can be written under the form $A=XX^t$ with $X$ a $n\times m$ matrix whose lines are i.i.d. with distribution $\mathcal{N}(0,\Xi)$. This distribution is written $\mathcal{W}_m(n,\Xi)$.}} (see for example \emph{Chapter 3} from~\cite{Muirhead} for some properties of the Wishart distribution). A first consequence is that $R(T)R(T)^t$ has a density and is thus a.s. invertible.} Denote then $w:={R(T)}^t\left({R(T)}{R(T)}^t\right)^{-1}v\in\mathbb{R}^m$. In particular, $R(T)w=v$.
{	Define {$f_1:=\frac{\Sigma^{-\frac{1}{2}}w}{||\Sigma^{-\frac{1}{2}}w||_2}$. Suppose that $\frac{\tilde{\xi}-\xi}{2}=\rho f_1$. Then, \begin{equation*}R(T)\Sigma^{\frac{1}{2}}\frac{\tilde{\xi}-\xi}{2}=\frac{\rho}{||\Sigma^{-\frac{1}{2}}w||_2\|v_0^1\|_2}v_0^1
\end{equation*}
and the coupling is successful at time $T$ if and only if:
	\begin{equation}\label{eq: xi-xi/2}
 	\rho=-\frac{\pi}{2T}\|v_0^1\|_2\|\Sigma^{-\frac{1}{2}}w\|_2.
 	\end{equation}}}
	Let $W_t$ be a ${m}$-dimensional Brownian motion starting from $0$ and 
{$\sigma:=\inf\{t\geq0 \ | \ W_t^1=\frac{\pi}{2T}\|v_0^1\|_2||\Sigma^{-\frac{1}{2}}w||_2\}$.}
	We define $\tilde{W}_t$ such that $W^j_t=\tilde{W}^j_t$ for all $j\geq 2$
	and \begin{equation*}
	\tilde{W}_t^1:=
	\begin{cases}
	-W_t^1 &\text{ if } t\leq \sigma \\\
	W_t^1 -2W_{\sigma}^1  &\text{else}
	\end{cases}.
	\end{equation*}
	In particular, $\frac{\tilde{W}_t^1-W_t^1}{2}=-W^1_{\sigma\wedge t}$. 
	
	We can now define the coupling for the Gaussians vectors $(\xi,\tilde{\xi})$. We choose:
	$
	\xi=\sum\limits_{l=1}^{{m}}W_1^l f_l$ and $
\tilde{\xi}=\sum\limits_{l=1}^{{m}}\tilde{W}_1^l f_l$. As $(W_t)_t$ is independent of $(f_l)_{1\leq l\leq {m}}$, this provides a well defined coupling for $\left(\bbb_t,\tbbb_t\right)_t$.
	In particular, 
	{$\frac{\tilde{\xi}-\xi}{2}=-W^1_{\sigma\wedge 1}f_1=-W^1_{\sigma\wedge 1}\frac{\Sigma^{-\frac{1}{2}}w}{||\Sigma^{-\frac{1}{2}}w||_2}$. }
		According to (\ref{eq: xi-xi/2}) and by definition of $\sigma$, the coupling is successful at time $T$ if and only if $\sigma<1$.\\
	Note that:
	{	\begin{equation*}
	v_t^1=
	\|v_0^1\|_2\ve-\frac{2T}{\pi}\frac{W^1_{\sigma\wedge1}}{||\Sigma^{-\frac{1}{2}}w||_2} R(t)w
	\end{equation*}}
	and at time $t=T$: 
	\begin{equation}\label{v_T vs M_t}
	v_T^1=\left(\|v_0^1\|_2-\frac{2T}{\pi}\frac{W^1_{\sigma\wedge1}}{||\Sigma^{-\frac{1}{2}}w||_2}\right) \ve.
	\end{equation}
In particular, $\|v_T^1\|_2=\|v_0^1\|_2-\frac{W^1_{\sigma\wedge1}}{||\Sigma^{-\frac{1}{2}}w||_2}$ by definition of $\sigma$.

{We now take a look at the behaviour of the horizontal parts induced by this coupling. Denote by $(e_1,\hdots,e_{m})$ the canonical basis in $\mathbb{R}^{m}$. For $0< t< T$:
\begin{align*}
\tX_t^1-X_t^1&=\sum\limits_{j=1}^m -2\frac{\sqrt{2T}}{j\pi}\sin\left(\frac{j\pi t}{T}\right)W^1_{\sigma\wedge 1}\frac{\langle\Sigma^{-\frac{1}{2}}w,e_j\rangle}{||\Sigma^{-\frac{1}{2}}w||_2}\\
&=-2\frac{\sqrt{2T}}{\pi||\Sigma^{-\frac{1}{2}}w||_2}W^1_{\sigma\wedge 1}\langle w,u(t)\rangle\\
&=-2\frac{\sqrt{2T}}{\pi||\Sigma^{-\frac{1}{2}}w||_2}W^1_{\sigma\wedge 1}\left\langle R(T)^t\left(R(T)R(T)^t\right)^{-1}v,u(t)\right\rangle
\end{align*}
where $u(t)=\sum\limits_{j=1}^m \sin\left(\frac{j\pi t}{T}\right)e_j$ is deterministic and nonzero.
In particular, a.s., $X_t=\tX_t$ if and only if $R(T)^t\left(R(T)R(T)^t\right)^{-1}$ lies in the hyperplane $L_t^{-1}(0)$ where $L_t:=A\in\mathcal{M}_{m,n-1}\mapsto\langle Av,u(t)\rangle$. Since $\left(R(T)^t\left(R(T)R(T)^t\right)^{-1}\right)^t$ has a pseudo-inverse matrix-variate normal distribution, it admits a density (see~\cite{Pseudo-inverse-Z} \emph{Section 4} or~\cite{Moore-Penrose-HS} for more general results about pseudo-inverse random matrices). Then $\pr(X^1_t=\tX^1_t)=0$ for $0<t<T$. In particular the coupling can only occurs at time $T$. }

		For the rest of the proof, we will need to study an upper bound of $||\Sigma^{-\frac{1}{2}}w||_2^2$. {We have 
	\begin{equation*}
{2}\|w\|_2^2=\Tr(w^tw)=\Tr\left(v^t\left({R(T)}{R(T)}^t\right)^{-1}v\right).
\end{equation*}
Then, with similar computations than in \emph{Proposition 3.1} from \cite{OneSweep}, using the fact that the Wishart is standard, we have:
	\begin{equation*}
{2}\esp[\|w\|_2^2]\leq\frac{\Tr(v v^t)}{n-1}\esp\left[\Tr\left(\left({R(T)}{R(T)}^t\right)^{-1}\right)\right]
\end{equation*}
As in~\cite{OneSweep}, using the result from \emph{Example 3.1} by~\cite{PH21} about the first moment of the inverse of a standard Wishart, we get:
\begin{equation}\label{eq: espWishart}
{2}\esp[\|w\|_2^2]\leq \frac{\|v\|^2_2}{n-1}\frac{n-1}{{m-(n-1)-1}}=\frac{1}{{m-n}}.
\end{equation}
Then:
	\begin{equation}\label{eq: Wishart-chi2}
	    \esp\left[||\Sigma^{-\frac{1}{2}}w||_2\right]=\esp\left[\sqrt{w^{t}\Sigma^{-1}w}\right]\leq m\sqrt{\esp[\|w\|_2^2]}\leq \frac{m}{\sqrt{{2(m-n)}}}.
	\end{equation}}

	We now give the global construction of our successful coupling. As for the Heisenberg group, we define a partition of the time line with geometrically growing lengths: $t_0=0<t_1<...<t_k<...$ such that for all $k\geq 0$, $T_k:=t_{k+1}-t_k=\frac{\|v_0^1\|_2}{3}2^{k}$. We reproduce the above construction on each interval $[t_k,t_{k+1}]$ and we use the exponent "$(k)$" to distinguish the objects define above. Note that we do not update the vector $\ve$ at each iteration but keep it equal to $\frac{v_0^1}{\|v_0^1\|_2}$ all the time. 
	The obtained matrices {$R^{(k)}(T_k)$}
	are independent and the same is true for the Brownian motions $\left(W_t^{1,(k)}\right)_t$. Then we get $\tau>t_{N}$ if and only if $v_t^1\neq 0$ for all $t=t_k$, $k\leq N$ which is equivalent to $\sigma^{(k)}>1$ for all $k\leq N-1$. We define the process $(M_t)_t$ at least on $[0,t_{N}]$ such that:
{	\begin{equation*}
	    \left\{\begin{array}{l}
		M_0=\|v_0^1\|_2\\
		M_t=M_{t_k}-W_{\frac{t-t_k}{T_k}}^{1,(k)}\times\frac{2T_k}{\pi}\frac{1}{\left\|\Sigma^{-\frac{1}{2}}w^{(k)}\right\|_2}\text{ for }t_k\leq t\leq t_{k+1}.
	\end{array}\right.
	\end{equation*}}
	In particular, from \eqref{v_T vs M_t}, for all $k\leq M$, we have $v_{t_k}=M_{t_k}\ve$.
	
	As $(M_t)_t$ is a martingale, it can be written $M_t=\|v_0^1\|_2+\Bet_{S(t)}$ with $\Bet$ a Brownian motion starting from $0$ and {$S(t)=\sum\limits_{k=0}^{N-1} \left(\frac{2T_k}{\pi}\right)^2\frac{1}{\left\|\Sigma^{-\frac{1}{2}}w^{(k)}\right\|_2^2}\left(\frac{t-t_k}{T_k}\wedge 1\right)\mathbb{1}_{\{t\geq t_k\}}$.}
	By construction of $(M_t)_t$, we have $\bar{\tau}_1>t_{N}$ if and only if $D_{-\|v_0^1\|_2}>S(t_{N})$ with $D_{-\|v_0^1\|_2}:=\inf\{s>0\ | \ \Bet_s=-\|v_0^1\|_2\}$. Thus, using an estimate of the first hitting time of a Brownian motion (Lemma \ref{hittingTime}), we get $\pr(\bar{\tau}_1>t_{N})\leq \frac{\sqrt{2}\|v_0^1\|_2}{\sqrt{\pi}}\esp\left[\frac{1}{\sqrt{S(t_{N})}}\right]$.
	For $N\geq {1}$, using (\ref{eq: Wishart-chi2}), we have:

	{	\begin{align*}
	\esp\left[\frac{1}{\sqrt{S(t_{N})}}\right]
	&\leq\frac{\pi }{2T_N}\esp\left[\left\|\Sigma^{-\frac{1}{2}}w^{(N)}\right\|_2\right]\leq \frac{\pi}{2T_N}\frac{m}{\sqrt{{2(m-n)}}}.
	\end{align*}}

We get: 
\begin{equation*}
\pr(\bar{\tau}_1>t_N)\leq \frac{\sqrt{2\pi}\|v_0^1\|_2}{T_{N+1}}\frac{m}{\sqrt{{2(m-n)}}}.
\end{equation*}
	Thus, for $t\in[t_N,t_{N+1}[=\left[\|v_0^1\|_2\frac{2^{N}-1}{3},\|v_0^1\|_2\frac{2^{N+1}-1}{3}\right[$ with $N\geq 1$ we have:
	\begin{equation}\label{eq: couplingRateGeneralm}
	\pr(\bar{\tau}_1>t)=\pr(\bar{\tau}_1>t_N)\leq \frac{\sqrt{2\pi}\|v_0^1\|_2}{t}\frac{m}{\sqrt{{2(m-n)}}}.
	\end{equation}
	By construction $t_1=\frac{\|v_0^1\|_2}{3}$, and \eqref{eq: couplingRateGeneralm} is true for all $t\geq \frac{\|v_0^1\|_2}{3}$. In particular, \eqref{eq: couplingRateGeneralm} is optimal for $m=2n$ which ensues the expected result.
\end{proof}
\begin{NB}\label{NB: ComparaisonHeisenberg}
In \cite{banerjee2017coupling} where the case $n=2$ is treated; the choice is made to simply take $m=n-1$. The above strategy is also successful for all $n\geq 2$ with $m=n-1$. In this case, $R(T)$ is invertible and $w=R(T)^{-1}v$. In particular $\frac{1}{\|w\|_2^2}$ is a chi-square with 1 degree of freedom (see for example~\cite{Muirhead}, Chapter 3). As in \cite{banerjee2017coupling}, at least two iterations of the Brownian bridges coupling are then necessary to have $\frac{1}{\sqrt{S(t_N)}}$ integrable: for $N\geq 2$,
	\begin{align*}
	    S(t_{N})\geq \left(\frac{2T_{N-1}}{\pi(n-1)}\right)^2\left(\frac{1}{||w^{(N)}||^2_2}+\frac{1}{||w^{(N-1)}||^2_2}\right)
	\end{align*}
and \begin{equation*}\esp\left[\left(\frac{1}{||w^{(N)}||^2_2}+\frac{1}{||w^{(N-1)}||^2_2}\right)^{-\frac{1}{2}}\right]=\sqrt{\frac{\pi}{2}}.\end{equation*}
Then, for $t\in[t_N,t_{N+1}[=\left[\|v_0^1\|_2\frac{2^{N}-1}{3},\|v_0^1\|_2\frac{2^{N+1}-1}{3}\right[$ with $N\geq 2$ we have:
	\begin{equation}\label{eq: couplingRateCarre}
	\pr(\bar{\tau}_1>t)=\pr(\bar{\tau}_1>t_N)<\|v_0^1\|_2\frac{2\pi(n-1)}{t_{N+1}}< \|v_0^1\|_2\frac{2\pi(n-1)}{t}.
	\end{equation}
	As $t_2=\|v_0^1\|_2$, the inequality is true for all $t\geq \|v_0^1\|_2$.
\end{NB}

We can now give the global construction for the case when the Brownian motions start from the same fiber, i.e. $x=\tx$:
\begin{thm}\label{Couplage par lignes}
	Set $n\geq 2$. Let $g=(x,z)$, $\tilde{g}=(\tilde{x},\tilde{z})$ be two points in $\Ge_n$ and $\zeta\in\mathfrak{so}(n)$ such that $g^{-1}\star \tg=(\tx-x,\zeta)$. Suppose that $x=\tilde{x}$. Then there exists a successful non co-adapted coupling $(\bbb_t,\tilde{\bbb}_t)$ starting from $(g,\tg)$ such that, for all $t\geq (n-1)\|\zeta\|_2$:
	\begin{equation}\label{eq: InegaliteMemeFibre}
		\pr(\tau>t)\leq ||\zeta||_2\frac{\beta_n}{t} \text{ with }{\beta_n=b_n(n-1)^{\frac{3}{2}}}.
	\end{equation}  
\end{thm}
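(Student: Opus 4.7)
The plan is to iterate Proposition \ref{Brique1} row by row on the skew-symmetric matrix $\zeta$. For $i \in \{1, \dots, n-1\}$, set $\alpha_i := \sqrt{\sum_{j>i} (\zeta_0^{i,j})^2}$, so that by skew-symmetry
\[
\sum_{i=1}^{n-1} \alpha_i^2 = \|\zeta\|_2^2.
\]

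First I will concatenate $n-1$ couplings using the strong Markov property. Starting from $(g, \tg)$ in the same fiber, I apply Proposition \ref{Brique1} with $i_0 = 1$ until the stopping time $\bar{\tau}_1$. At that time one has $X = \tX$ and the first row of the current $\zeta_t$ (and, by skew-symmetry, its first column) is zero, while every other entry still equals its value at time $0$. I then restart in the common fiber and apply Proposition \ref{Brique1} with $i_0 = 2$ up to a further time $\bar{\tau}_2$: the constancy clause of Proposition \ref{Brique1} guarantees that the first row and column stay zero during this second phase, and that the initial vector $v_0^{(2)}$ has first coordinate $0$ and other coordinates $\zeta_0^{2, j}$ for $j > 2$, hence norm exactly $\alpha_2$. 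Iterating for $i_0 = 3, \dots, n-1$ yields a coupling with total coupling time $\tau = \sum_{i=1}^{n-1} \bar{\tau}_i$ at which the whole matrix $\zeta$ is zero and $X = \tX$, so $\bbb_\tau = \tbbb_\tau$.

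For the rate estimate, fix $t \geq (n-1)\|\zeta\|_2$. Since $\max_i \alpha_i \leq \|\zeta\|_2 \leq t/(n-1)$, inequality \eqref{eq: InegaliteBrique} applies to each iteration with horizon $t/(n-1)$. Combining the inclusion $\{\tau > t\} \subset \bigcup_i \{\bar{\tau}_i > t/(n-1)\}$ with a union bound and Proposition \ref{Brique1}, I get
\[
\pr(\tau > t) \leq \sum_{i=1}^{n-1} \frac{b_n (n-1)\, \alpha_i}{t} = \frac{b_n(n-1)}{t} \sum_{i=1}^{n-1} \alpha_i.
\]
Finally, Cauchy--Schwarz yields $\sum_{i=1}^{n-1} \alpha_i \leq \sqrt{n-1}\,\|\zeta\|_2$, so $\pr(\tau > t) \leq b_n (n-1)^{3/2} \|\zeta\|_2 / t$, which is \eqref{eq: InegaliteMemeFibre} with $\beta_n = b_n (n-1)^{3/2}$.

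The main obstacle in this plan is really a bookkeeping one: one must carefully justify that the zero entries created at step $i_0$ remain zero during all subsequent iterations (which is exactly what the last sentence of Proposition \ref{Brique1} provides), and that applying the strong Markov property at each $\bar{\tau}_i$ yields the estimates of \eqref{eq: InegaliteBrique} independently, with $\|v_0^{(i)}\|_2 = \alpha_i$. The rest is elementary; the only analytic input is Proposition \ref{Brique1}, and the only trick is the Cauchy--Schwarz step that replaces the naive $(n-1)^2$ bound by $(n-1)^{3/2}$.
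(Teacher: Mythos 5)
Your proof is correct and follows essentially the same route as the paper: iterate Proposition \ref{Brique1} row by row (using the constancy clause to ensure already-zeroed rows stay zero), bound $\pr(\tau>t)$ by the sum $\sum_i \pr(\bar\tau_i>t/(n-1))$, and apply Cauchy--Schwarz to $\sum_i\alpha_i$ with $\sum_i\alpha_i^2=\|\zeta\|_2^2$ to obtain $(n-1)^{3/2}$. The paper's decomposition of $\{\tau>t\}$ is written as a disjoint partition rather than a plain union bound, but since it discards the extra conditioning in the next line, the two arguments are the same.
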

\begin{NB}\label{Rem: tGd par rapport à Z}
As $\beta_n \geq 1$ for all $n\geq 2$, (\ref{eq: InegaliteMemeFibre}) is trivially still true if $t<(n-1)\|\zeta\|_2$ and can be replaced by:
\begin{equation*}
    \pr\left(\tau>t\right)\leq\left(||\zeta||_2\frac{\beta_n}{t}\right)\wedge 1\text{ for all }t>0.
\end{equation*}
\end{NB}

\begin{proof}[Proof of Theorem \ref{Couplage par lignes}]
Let $g$, $\tilde{g}\in \Ge_{n}$. 
Denote $\tau_0:=0$ and, for $1\leq i\leq n-1$:
\begin{equation*} 
\tau_{i}:=\inf\left\{t>\tau_{i-1}\ | \  X_t=\tX_t \text{ and }\zeta_t^{i,j}=\tilde{\zeta}_t^{i,j} \ \forall j\in\{1,\hdots n-1\}\setminus\{i\}\right\}.
\end{equation*}
For all $1\leq i\leq n-1$, we apply the coupling from Proposition \ref{Couplage par lignes} for $i_0=i$ on $[\tau_{i-1},\tau_{i}]$. As $\zeta_t$ is skew-symmetric and by construction of the coupling, we have $\sum\limits_{i=1}^{n-1}\tau_{i}=\tau$. {In particular $\|v_{\tau_{i-1}}^i\|_2^2=\sum\limits_{j=i+1}^n\left(\zeta_0^{i,j}\right)^2$.}

If $t\geq (n-1)\|\zeta_0\|_2$, then $\frac{t}{n-1}\geq\|v_{\tau_{i-1}}^i\|_2$ for all $1\leq i\leq n-1$. We obtain:
\begin{align*}
	\pr\left({\tau}>t\right)&\leq \sum\limits_{i=2}^{n-1}\pr\left(\tau_{i}-\tau_{i-1}>\frac{t}{n-1},\tau_{l}-\tau_{l-1}\leq\frac{t}{n-1}\ \forall 1\leq l\leq i-1\right)\\
	&+\pr\left(\tau_{1}>\frac{t}{n-1}\right)\\
	&{\leq \sum\limits_{i=1}^{n-1}\|v_{\tau_{i-1}}^i\|_2\frac{(n-1)b_n}{t}\leq\sqrt{n-1} \|\zeta_0\|_2 \frac{(n-1)b_n}{t}.}
\end{align*} 
\end{proof}

\begin{NB}
As $\zeta$ is skew-symmetric, there exists a matrix $P_v$ such that  $\left(P_v^{t}\zeta P_v\right)_{i,j}\neq0$ only if $i$ is odd and $j=i+1$. In particular, $\|P_v^{t}\zeta P_v\|_2=\|\zeta\|_2$. Then applying the coupling from Proposition \ref{Couplage par lignes} to this new matrix, we can reduce the number of {lines} to be treated. In particular, we obtain the previous result with $\lfloor\frac{n}{2}\rfloor^{\frac{3}{2}}b_n$ instead of $\beta_n$.
\end{NB}
\begin{NB}
{The estimation of the coupling rate obtained in \eqref{eq: InegaliteMemeFibre} is enough to conclude that this vertical coupling is efficient. The obtained constant $\beta_n$ is however not good enough to say that it is maximal. Indeed, the coupling rate obtained with the maximal coupling of Luo and Neel in~\cite{luo2024nonmarkovian} is equal to $\frac{\|\zeta\|_2}{t}$. Even for $n\geq 3$, the estimate of the total variation distance in \emph{Theorem 3.2} from~\cite{OneSweep} is better than the one obtained here. Of course some estimates such as \eqref{eq: Wishart-chi2} are maybe not good enough. In particular it forces us to chose $m=2n$ when it would seem more efficient to release the constraints $\xi_k=\txi_k$ as much as possible, i.e., to take $m$ as large as possible.}
\end{NB}
\subsection{General case}
We can now deal with the general case, i.e., for $x\neq \tilde{x}$. 
\begin{proof}[Proof of Theorem \ref{thm: successfulGn}]
To obtain a successful coupling on $\Ge_n$, we are going to use first the reflection coupling until the time $\tau_0:=\inf\{t\geq0\ | \  X_t=\tilde{X}_t\}$ and then the coupling from Theorem \ref{Couplage par lignes}. We remind how to define the reflection coupling. There exists $P_h\in\mathcal{O}(n)$ 
 such that $(P_hx)_i=(P_h\tx)_i$ for all $2\leq i\leq n$, and $(P_hx)_1-(P_h\tx)_1=||x-\tx||_2$. We first define a coupling $\left({\uX}_t,\utX_t\right)$ of Brownian motions on $\mathbb{R}^n$ starting from $\left(P_hx,P_h\tx\right)$: we choose $\uX^i_t={\utX}^i_t$ for $2\leq i\leq n$ and $d\uX_t^1=-d\utX_t^1$. We are then able to deduce a coupling of Brownian motion on $\Ge_n$ such that $(X_t,\tX_t):=\left(P_h^{t}\uX_t,P_h^{t}\utX_t\right)$. In particular, we have $\tau_0=\inf\{t\geq0\ | \  \uX_t=\tilde{\uX}_t\}$. Thus $\tau_0$ has the distribution of the first hitting time $D_a$ of a Brownian motion with $a=\frac{1}{2}||x-\tx||_2$ and $\pr(\tau_0>t) \leq \left(\frac{||x-\tx||_2}{\sqrt{2\pi t}}\right)\wedge 1$ (see Lemma \ref{hittingTime}). At time $\tau_0$, the Brownian motions are in the same fiber and we can apply the coupling from Theorem \ref{Couplage par lignes} on $[\tau_0,\tau]$.
For $t>0$, we get:
\begin{align}\label{eq: couplingRate}
\pr(\tau>t)&=\pr\left(\tau>t,\tau_0\leq\frac{t}{2}\right)+\pr\left(\tau>t,\tau_0>\frac{t}{2}\right)\leq\pr\left(\tau-\tau_0>\frac{t}{2},\tau_0\leq\frac{t}{2}\right)+\pr\left(\tau_0>\frac{t}{2}\right)\notag\\
&\leq\esp\left[\pr\left(\tau-\tau_0>\frac{t}{2}\ \bigg|\	\|\zeta_{\tau_0}\|_2\right)\mathbb{1}_{\{\tau_0\leq\frac{t}{2}\}}\right]+\frac{||x-\tx||_2}{\sqrt{\pi t}}\notag\\
&\leq\esp\left[\left(||\zeta_{\tau_0}||_2\frac{2\beta_n}{t}\right)\wedge 1\right]+\frac{||x-\tx||_2}{\sqrt{\pi t}}.
\end{align}
We now need a good estimate of $\esp\left[||\zeta_{\tau_0}||_2\right]$. This can be done by using the following Lemma which will be proven just after the end of the current proof: 
\begin{lemme}\label{lemme: aires multiples2}
For every $m\geq \frac{1}{2}\|x-\tx\|_2^2$, we have:
\begin{equation*}
\esp\left[\frac{\left\|\zeta_{\tau_0}\right\|_2}{{m}}\wedge 1\right]\leq \frac{\|\zeta\|_2}{m}+\frac{2\sqrt{2}}{\sqrt{m}}(n-1)||x-\tx||_2.
\end{equation*}
\end{lemme}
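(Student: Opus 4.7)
The first step is a triangle inequality in the rotated frame of the reflection coupling. Recall that on $[0,\tau_0]$ one introduces the rotation $P_h\in O(n)$ and the processes $\uX:=P_hX$, $\utX:=P_h\tX$, which satisfy $\uX^i=\utX^i$ for $i\ge 2$, while $W_t:=\uX^1_t-\utX^1_t$ is a Brownian motion of quadratic variation $4t$ started at $a:=\|x-\tx\|_2$ and hitting $0$ at $\tau_0$. Writing $\underline\zeta_t := P_h\zeta_t P_h^{t}$, note that $\|\underline\zeta_t\|_2=\|\zeta_t\|_2$ by the orthogonal invariance of $\|\cdot\|_2$ recorded in Section~\ref{subsec: pseudo-distance}. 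Equation~\eqref{eq: coordonneesZeta} rewritten in the rotated frame gives $d\underline\zeta^{i,j}_t = \utX^i_t d\utX^j_t - \uX^i_t d\uX^j_t + \tfrac{1}{2}d((\utX^i+\uX^i)(\uX^j-\utX^j))$, so $\underline\zeta^{i,j}$ is constant on $[0,\tau_0]$ for $2\le i<j\le n$, and
\[
\underline\zeta^{1,j}_{\tau_0}-\underline\zeta^{1,j}_0 \;=\; -\int_0^{\tau_0}W_s\,d\uX^j_s \;=:\; M_j \quad (j\ge 2).
\]
Combining $\|\zeta_{\tau_0}\|_2\le\|\zeta\|_2+\sqrt{\sum_{j=2}^n M_j^2}$ with the pointwise bound $(u+v)\wedge 1\le u+(v\wedge 1)$ for $u,v\ge 0$, the lemma reduces to showing
\[
\esp\!\left[\tfrac{1}{m}\sqrt{\textstyle\sum_{j=2}^n M_j^2}\,\wedge\, 1\right] \;\le\; \frac{2\sqrt{2}\,(n-1)\,a}{\sqrt m}.
\]

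The second step reduces this to a one-dimensional Heisenberg estimate. Using $\sqrt{\sum_j v_j^2}\le\sum_j|v_j|$ and $(\sum_j w_j)\wedge 1\le\sum_j(w_j\wedge 1)$ for $w_j\ge 0$, the left-hand side is bounded by $(n-1)\,\esp[|M_2|/m\wedge 1]$, so it remains to prove $\esp[|M_2|/m\wedge 1]\le 2\sqrt{2}\,a/\sqrt m$. For each $j\ge 2$, the triple $(\uX^1,\uX^j,\underline\zeta^{1,j})_t$ together with its counterpart under $\utX$ realises a pair of Brownian motions on the Heisenberg group $\hh$ coupled by reflection of the first horizontal coordinate and synchronisation of the second; the desired inequality is exactly the $n=2$ version of the present lemma, which can be invoked from Banerjee--Gordina--Mariano~\cite{banerjee2017coupling} or else proved here via the tail representation
\[
\esp\!\left[\tfrac{|M_2|}{m}\wedge 1\right] \;=\; \tfrac{1}{m}\int_0^m \pr(|M_2|>u)\,du,
\]
combined with the bound $\pr(\tau_0>m)\le a/\sqrt{2\pi m}$ of Lemma~\ref{hittingTime} on the event $\{\tau_0>m\}$.

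The main obstacle is precisely this tail estimate $\pr(|M_2|>u)\lesssim a/\sqrt u$: since $\esp[|M_2|]=\infty$, a naive Markov inequality fails, and one must fully exploit the $\wedge 1$ truncation. Conditionally on $\mathcal F^{\uX^1}$, $M_2$ is Gaussian of variance $V^*:=\int_0^{\tau_0}W_s^2\,ds$, and by Brownian scaling $V^*\stackrel{d}{=}a^4 J/4$, where $J=\int_0^T\beta_s^2\,ds$ is a universal functional of a standard Brownian motion $\beta$ started at $1$ and stopped at its first hit of $0$. Since $J$ has Pareto-type tails $\pr(J>K)\lesssim K^{-1/4}$ (which follows from the Brownian scaling $J\approx T^2\int_0^1 \hat\beta_u^2\,du$ and $\pr(T>t)\lesssim t^{-1/2}$), one obtains $\pr(\sqrt{V^*}>s)\lesssim a/\sqrt s$, and after averaging the Gaussian tail of $M_2\mid\mathcal F^{\uX^1}$ this propagates to the required $\pr(|M_2|>u)\lesssim a/\sqrt u$. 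The hypothesis $m\ge\tfrac12 a^2$ is exactly what keeps $\pr(\tau_0>m)$ of the same order as the integrated tail, so that the two contributions combine into a universal constant; a careful bookkeeping of the constants arising from the Gaussian concentration and the Pareto-type bound on $V^*$ delivers the explicit factor $2\sqrt{2}$ in the statement.
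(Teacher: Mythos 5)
Your proof follows the paper's route exactly: rotate to the reflection-coupling frame, decompose $\|\zeta_{\tau_0}\|_2\le\|\zeta\|_2+\|\underline\zeta_{\tau_0}-\underline\zeta_0\|_2$ (with $\underline\zeta$ supported on the first row/column, so $\|\underline\zeta_{\tau_0}-\underline\zeta_0\|_2=\sqrt{\sum_{j\ge2}M_j^2}$), pass to a coordinate-wise sum of truncated expectations, and apply the single-Lévy-area estimate (the paper's Lemma~\ref{AiresHmultiples}, a shifted version of Lemma~3.4 in~\cite{banerjee2017coupling}) to each $\underline\zeta^{1,j}_{\tau_0}-\underline\zeta^{1,j}_0$. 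Your final paragraph sketches a direct proof of that Heisenberg-level bound where the paper simply cites it; the constant bookkeeping there is only gestured at, but your fallback to the citation route reproduces the paper's argument verbatim.
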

If we choose $m:=\frac{t}{2\beta_n}$ with $t$ such that $\frac{t}{2\beta_n}\geq \frac{1}{2}\|x-\tx\|_2^2$, we can use Lemma \ref{lemme: aires multiples2} together with (\ref{eq: couplingRate}) to obtain the expected result. This ends the proof of Theorem \ref{thm: successfulGn}.
\end{proof}
We now proceed with the proof of Lemma \ref{lemme: aires multiples2}.
We use the results from the coupling on the Heisenberg group. 
In particular \emph{Lemma 3.4} from~\cite{banerjee2017coupling} can be written this way:
\begin{lemme}\label{AiresHmultiples}
	Let $\left((X_t,z_t),(\tX_t,\tz_t)\right)_t$ be a coupling of two Brownian motion on $\Ge_n$ such that:
	\begin{itemize}
	    \item $X_t^j=\tX_t^j$ for all $t$ and $j\geq 2$;
	    \item $dX_t^1=-d\tX_t^1$ on $[0,S]$ with $S=\inf\{t\geq0 | X_t^1=\tX_t^1\}$.
	\end{itemize}
  For all $m\geq \frac{1}{2}|X_0^1-\tX_0^1|^2$, we have: \begin{equation*}\esp\left[\left|\zeta^{1,j}_{S}-\zeta^{1,j}_{0}\right|\wedge {m}\right]\leq 
  2\sqrt{2m}|X_0^1-\tX_0^1|
  .\end{equation*}
\end{lemme}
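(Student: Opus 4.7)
The strategy is to exhibit $\zeta^{1,j}_S-\zeta^{1,j}_0$ as a stochastic integral against a Brownian motion independent of the reflection-coupled pair, and then estimate that integral by conditioning. Applying (\ref{eq: coordonneesZeta}) with $i=1$ together with the hypothesis $X^j=\tX^j$ for $j\ge 2$, the mixed-area term cancels and one obtains
\begin{equation*}
U_S\,:=\,\zeta^{1,j}_S-\zeta^{1,j}_0 \;=\; \int_0^S Y_s\, dX^j_s, \qquad Y_s:=\tX^1_s-X^1_s.
\end{equation*}
The reflection $dX^1_s=-d\tX^1_s$ on $[0,S]$ makes $(Y_s)_{s\le S}$ a Brownian motion of variance $4$ starting from $Y_0:=\tX^1_0-X^1_0$, stopped when it first hits $0$; and since $X^j$ is independent of $X^1,\tX^1$, it is independent of $(Y_s)$.

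Conditioning on the path of $Y$, the random variable $U_S$ is a centred Gaussian with random variance $V:=\int_0^S Y_s^2\,ds$. I would then write $\esp[|U_S|\wedge m]=\int_0^m\pr(|U_S|>t)\,dt$ and bound $\pr(|U_S|>t)$ via the pointwise estimate $\pr(|N(0,V)|>t)\le 1\wedge(V/t^2)$ combined with a tail bound on $V$ itself. The Brownian scaling $V\stackrel{d}{=}Y_0^4\,\hat V$ for a universal law $\hat V$ reduces the latter to the classical estimates $\pr(S>t)\lesssim |Y_0|/\sqrt t$ for the first hitting time of $0$ by $Y$ and $\pr(\sup_{s\le S}|Y_s|>r)=|Y_0|/r$ for the maximum before hitting, together yielding $\pr(V>v)\lesssim |Y_0|/v^{1/4}$ for $v$ large.

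The main obstacle is that $V$ has infinite mean (already $\esp[S]=+\infty$), so the truncation at $m$ has to be exploited throughout. Splitting $\int_0^m$ at the critical threshold $t\sim |Y_0|^2$ (below it one uses the trivial bound $\pr(|U_S|>t)\le 1$; above it, the conditional Gaussian estimate combined with the tail of $V$ gives $\pr(|U_S|>t)\lesssim |Y_0|/\sqrt t$) and using the hypothesis $m\ge |Y_0|^2/2$ to stay in the large-$v$ regime produces a bound of the form $C\,|Y_0|\sqrt m$; the only delicate work is the tracking of the explicit constants to reach $2\sqrt{2m}\,|Y_0|$. Alternatively, since the triple $(X^1_s,X^j_s,\zeta^{1,j}_s)_s$ is itself a Brownian motion on the Heisenberg group $\hh\simeq\Ge_2$ and the construction restricts to the standard 2D reflection coupling on its horizontal component, the statement is precisely Lemma~3.4 of \cite{banerjee2017coupling} rewritten in the present notation; this is in fact the most economical route.
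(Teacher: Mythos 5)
Your proposal is correct and follows essentially the route the paper itself takes: the paper's proof of Lemma~\ref{AiresHmultiples} is precisely the observation (stated in the text just after the lemma) that the triple $(X^1,X^j,\zeta^{1,j})$ is a Heisenberg Brownian motion under the standard reflection coupling, so the claim is \emph{Lemma~3.4} of~\cite{banerjee2017coupling} rewritten for $\zeta_S^{1,j}-\zeta_0^{1,j}$ instead of $\zeta_S^{1,j}$, which is exactly your second (and ``most economical'') route. Your self-contained sketch — representing $\zeta_S^{1,j}-\zeta_0^{1,j}=\int_0^S(\tX^1_s-X^1_s)\,dX^j_s$ via \eqref{eq: coordonneesZeta}, conditioning on the independent difference process to get a centred Gaussian of random variance $V=\int_0^S(\tX^1_s-X^1_s)^2\,ds$, and then combining a Chebyshev tail with hitting-time and gambler's-ruin estimates for $V$ — is the content of the cited BGM lemma, and your truncation argument handles the infinite mean of $V$ correctly; the only part you leave open (as you acknowledge) is chasing the exact constant $2\sqrt{2m}$, which the citation supplies.
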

The original proof can be found in~\cite{banerjee2017coupling} for $\zeta_S^{1,j}$ instead of $\zeta^{1,j}_{S}-\zeta^{1,j}_{0}$. This slight modification allows us to release the hypothesis on $m$ and to obtain better estimates for $C_1(n)$ and $C_2(n)$ in Theorem \ref{thm: successfulGn} (better order for $n$). 

We can now give the proof of Lemma \ref{lemme: aires multiples2}:
\begin{proof}[Proof of Lemma \ref{lemme: aires multiples2}]
We use the same notations as in  the proof of Theorem \ref{thm: successfulGn}. In particular we use the notation $\left({\ubbb}_t,{\utbbb}_t\right)_t$ to describe the couplings of Brownian motions induced by $\left(\uX_t,\utX_t\right)_t$ and $\uzeta_t$ such that $\ubbb_t^{-1}\utbbb_t=(\utX_t-X_t,\uzeta_t)$. Note that $\uzeta_t=P_h\zeta_tP_h^{t}$. As $\left(\uX_t,\utX_t\right)_t$ is defined by using the reflection coupling, the conditions of Lemma \ref{AiresHmultiples} are satisfied.

Note that $|\uX_0^1-\utX_0^1|^2=\|x-\tx\|_2^2$. Then for $m\geq  \frac{1}{2}\|x-\tx\|_2^2$ and for every $j\geq 2$, by applying Lemma \ref{AiresHmultiples}, we obtain:
\begin{equation}\label{eq: majorationZGn}
\esp\left[\frac{\left|\uzeta_{\tau_0}^{1,j}-\uzeta_{0}^{1,j}\right|}{m}\wedge 1\right]=\frac{1}{m}\esp\left[\left|\uzeta_{\tau_0}^{1,j}-\uzeta_{0}^{1,j}\right|\wedge m\right]\leq \frac{2\sqrt{2}}{\sqrt{m}}\|x-\tx\|_2.
\end{equation}
Moreover, for $t\in[0,\tau_0]$, $\uX^j_t=\utX^j_t$ for $2\leq j\leq n$, thus, $(\uzeta_{t})^{i,j}$ is constant on $[0,\tau_0]$ for all $2\leq i\neq j\leq n$. Using equivalences of norms and the fact that $\|\zeta_{\tau_0}-\zeta_0\|_2=\|\uzeta_{\tau_0}-\uzeta_0\|_2$, we get:
\begin{align}
\esp\left[\frac{\left\|\zeta_{\tau_0}\right\|_2}{{m}}\wedge 1\right]&\leq \frac{\|\zeta_0\|_2}{m}+\esp\left[\frac{\left\|\zeta_{\tau_0}-\zeta_{0}\right\|_2}{{m}}\wedge 1\right]=\frac{\|\zeta\|_2}{m}+\esp\left[\frac{\|\uzeta_{\tau_0}-\uzeta_0\|_2}{{m}}\wedge 1\right]\notag\\
&\leq\frac{\|\zeta\|_2}{m}+\sum\limits_{j=2}^n\esp\left[\frac{|\uzeta^{1,j}_{\tau_0}-\uzeta^{1,j}_0|}{{m}}\wedge 1\right]\notag\\
&\leq \frac{\|\zeta\|_2}{m}+\frac{2\sqrt{2}}{\sqrt{m}}(n-1)||x-\tx||_2\notag.
\end{align}
\end{proof}

Using the Coupling Inequality \eqref{eq: Chap1Aldous}, we directly obtain an estimate for Total Variation distance between the the Brownian motions:
\begin{cor}\label{Cor: dtvGnSucces}
Let $n\geq 2$, $g=(x,z)$, $\tg=(\tx,\tz)$ two points in $\Ge_n$ and $\zeta\in\mathfrak{so}(n)$ such that $g^{-1}\star \tg=(\tx-x,\zeta)$. For $t\geq \beta_n \|x-\tx\|^2_2$: 
\begin{equation}\label{eq: dTVGn}
    d_{TV}\left(\mu_t^{g},\mu_t^{\tg}\right)\le C_1(n)\frac{\|\tilde x- x\|_2}{\sqrt t}
    +C_2(n)\frac{\|\zeta\|_2}t.
\end{equation}
with $C_1(n)$ and $C_2(n)$ the constants given in Theorem \ref{thm: successfulGn}.
\end{cor}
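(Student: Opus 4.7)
The plan is a direct application of the coupling inequality to the coupling supplied by Theorem \ref{thm: successfulGn}. First I would invoke that theorem to obtain, on some probability space, a successful coupling $(\bbb_t,\tbbb_t)_t$ of Brownian motions on $\Ge_n$ starting from $(g,\tg)$, with first coupling time $\tau$ satisfying
\begin{equation*}
\pr(\tau>t)\le C_1(n)\frac{\|x-\tx\|_2}{\sqrt t}+C_2(n)\frac{\|\zeta\|_2}{t}
\end{equation*}
for every $t\ge\beta_n\|x-\tx\|_2^2$. Since by construction $\bbb_t$ has law $\mu_t^g$ and $\tbbb_t$ has law $\mu_t^{\tg}$, this is a valid coupling of these two marginal laws.

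Next I would appeal to the Aldous coupling inequality (\ref{eq: Chap1Aldous}), which for any coupling yields $d_{TV}(\mu_t^g,\mu_t^{\tg})\le \pr(\bbb_t\neq\tbbb_t)$. As the constructed coupling is taken to be sticky (the processes are glued together after $\tau$, as is implicit in the successful-coupling setup), one has $\{\bbb_t\neq\tbbb_t\}\subseteq\{\tau>t\}$, so $d_{TV}(\mu_t^g,\mu_t^{\tg})\le \pr(\tau>t)$. Chaining this with the bound from Theorem \ref{thm: successfulGn} gives exactly (\ref{eq: dTVGn}) in the stated range of $t$. There is no real obstacle here: the entire content of the corollary is repackaging the coupling rate as a total-variation bound, all the work having already been done in Theorem \ref{thm: successfulGn}.
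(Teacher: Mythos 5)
Your proposal is correct and matches the paper's argument exactly: the corollary is obtained by combining the coupling rate bound from Theorem~\ref{thm: successfulGn} with the Aldous coupling inequality~\eqref{eq: Chap1Aldous}, using that the coupling is sticky after $\tau$. Nothing further is needed.
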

\section{Coupling on homogeneous Carnot groups}\label{Sec: couplingAllHomogeneous}
\subsection{Homogeneous, step $2$ Carnot groups}\label{section: homogeneous}
We give here an overview of the Homogeneous Carnot groups. For more details, we refer the reader to \cite{bonfiglioli2007stratified}.

\begin{defi}\label{Prop: LoiPolynomiale}
    Set $n\geq 2$ and $m\leq\frac{n(n-1)}{2}$. Let $C^{(1)},\dots,C^{(m)}\in M_{n\times n}(\mathbb{R})$ such that the skew-symmetric parts $D^{(k)}=\frac{1}{2}\left(C^{(k)}-\left(C^{(k)}\right)^{t}\right)$ of $C^{(k)}$ are linearly independent for all $1\leq k\leq m$. We define $\Ge=(\mathbb{R}^{n+ m},\circ)$such that for all $(x,z):=(x_1,\dots,x_n,z_1,\dots z_m)$,  $(\tx,\tz):=(\tx_1,\dots,\tx_n,\tz_1,\dots \tz_m) \in \Ge$:
\begin{equation}\label{gp Carnot}
	(x,z)\circ(\tx,\tz)=\left(x+\tx,\left(z_k+\tz_k+\frac{1}{2}\langle C^{(k)}x|\tx\rangle\right)_{1\leq k\leq m}\right).
\end{equation} 
 Then $\Ge$ is called an homogeneous Carnot group with step $2$ and $n$ generators.  
\end{defi}

\begin{NB}\label{NB: isomorphisme Gn}
Let $n\geq 2$. For all $1\leq i<j\leq n$, let $C^{(i,j)}=\left(c_{l,r}^{(i,j)}\right)_{l,r}$ with  \begin{equation*}c_{l,r}^{(i,j)}=\left\{\begin{array}{ll}
	-1&\text{ if } (l,r)= (i,j)\\
	1 &\text{ if } (l,r)= (j,i)\\
	0&\text{ else }
\end{array}\right.\end{equation*}
As in Definition \ref{Prop: LoiPolynomiale}, these matrices define an homogeneous Carnot group $\left(\mathbb{R}^{n+ \frac{n(n-1)}{2}},\circ\right)$ which is isomorphic to the free, step $2$ Carnot group $\Ge_n$.
\end{NB}
Let $\Ge=(\mathbb{R}^{n+ m},\circ)$ be a homogeneous Carnot group with step $2$ and $n\geq 2$ generators. As for $G_n$, we can consider the horizontal left-invariant vector fields $(\bX_1,\hdots,\bX_n)$ such that $\bX_i(0)=\partial_{x_i}$ for all $1\leq i\leq n$. As the skew-symmetric matrices  $(D^{(k)})_{1\leq k\leq m}$ are linearly independent, for $\mathcal{H}:=\Span\{\bX_1,\hdots,\bX_n\}$, the Hörmander condition are satisfied. Still denoting by $d_{cc}$ the Carnot Carathéodory distance, we can then endow $\Ge$ with a structure of connected subRiemannian manifold.

The associated sub-Laplacian operator is also defined by $L=\sum\limits_{i=1}^n \bX_i^2$ and the associated Brownian motion 
satisfies the stochastic differential equation:
\begin{equation}\label{StratoMBCarnot2}
    d\bbb_t=\sum\limits_{i=1}^{n}\bX_i(\bbb_t)\circ dB_t^i
\end{equation} with $\circ d$ the Stratonovitch differential and $\left(B_t^1,\hdots,B_t^n\right)$ a Brownian motion on $\mathbb{R}^n$.
In particular the Brownian motion starting at $g=(x,z)\in \Ge$ can be defined as the continuous process $(\bbb_t)_t=\left(\Big(X_t,\big(z_{t}^k\big)_{1\leq k\leq m}\Big)\right)_t$
where:
\begin{itemize}
    \item $(X_t)_t=(X_t^1,\hdots,X_t^n)_t$ is a $\mathbb{R}^n$-Brownian motion starting from $x$;
    \item For all $1\leq k\leq m$, $dz_{t}^k=\frac{1}{2}\sum\limits_{i,j=1}^nc_{i,j}^{(k)}X_t^jdX_t^i+\frac{1}{4} \sum\limits_{i=1}^n c_{i,i}^{(k)}dt$.
    Still denoting  by $\mathcal{A}_t^{i,j}$ the Lévy area associated to $(X_t^i,X_t^j)$, we can also write: \begin{align*}z_t^k=z_0^k+\sum\limits_{1\leq i<j\leq n}\frac{1}{2}(c_{j,i}^{(k)}-c_{i,j}^{(k)})\mathcal{A}_t^{i,j}+\frac{1}{4}\sum\limits_{i,j=1}^nc_{i,j}^{(k)}\left(X_{t}^i X_{t}^j-X_{0}^i X_{0}^j\right)
    \end{align*}
    
\end{itemize}

The following proposition allows us to reduce our study to free, step $2$ Carnot groups:
\begin{prop}\label{epimorphisme}
Let $\Ge=\left(\mathbb{R}^{n+ m},\circ\right)$ be a homogeneous Carnot group with step $2$ and $n$ generators. As previously, ${\Ge}_n$ denotes the free, step $2$ Carnot group with $n$ generators. There exists a smooth surjective Lie group morphism $\phi:{\Ge}_n\to \Ge$ such that:
\begin{enumerate}
\item\label{item0} $\phi$ preserves the horizontal coordinate "$x$";
    \item\label{item1} for all $g,\tg\in \Ge_n$, $d_{cc}(g,\tg)\geq d_{cc}(\phi(g),\phi(\tg))$;
    \item\label{item2} for all $a,\tilde{a}\in \Ge$, there exist $g,\tg\in \Ge_n$ such that $\phi(g)=a$, $\phi(\tg)=\tilde{a}$ and $d_{cc}(g,\tg)=d_{cc}(a,\tilde{a})$;
    \item\label{item3} if $(\bbb_t)_t$ is a Brownian motion on $\Ge_n$, then $\left(\phi(\bbb_t)\right)_t$ is a Brownian motion on $\Ge$.
\end{enumerate}
\end{prop}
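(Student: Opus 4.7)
The plan is to write down $\phi$ explicitly and then verify items~\ref{item0}--\ref{item3} using the fact that $\phi$ restricts to the identity on the horizontal layer. For $(x,z) \in \Ge_n$, let $S^{(k)} := \frac{1}{2}\bigl(C^{(k)} + (C^{(k)})^{t}\bigr)$ be the symmetric part of $C^{(k)}$ and define
\begin{equation*}
\phi(x,z) := \Bigl(x,\ \bigl(\tfrac{1}{2}\Tr(D^{(k)} z) + \tfrac{1}{4}\langle S^{(k)} x\mid x\rangle\bigr)_{1\leq k\leq m}\Bigr).
\end{equation*}
The first step is to verify that $\phi$ is a Lie group morphism. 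A direct computation using the skew-symmetry of $D^{(k)}$ yields $\Tr\bigl(D^{(k)}(x\symp\tx)\bigr) = 2\langle D^{(k)} x\mid\tx\rangle$, while the polarisation identity $\langle S^{(k)}(x+\tx)\mid x+\tx\rangle - \langle S^{(k)}x\mid x\rangle - \langle S^{(k)}\tx\mid\tx\rangle = 2\langle S^{(k)}x\mid\tx\rangle$ handles the symmetric contribution. Adding these gives exactly the cross term $\frac{1}{2}\langle C^{(k)}x\mid\tx\rangle$ needed for $\phi((x,z)\star(\tx,\tz)) = \phi(x,z)\circ\phi(\tx,\tz)$. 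Surjectivity then follows from the linear independence of the $D^{(k)}$ in $\mathfrak{so}(n)$: for any target $(x, z_1,\ldots,z_m) \in \Ge$, the $m$ linear equations $\frac{1}{2}\Tr(D^{(k)}\hz) = z_k - \frac{1}{4}\langle S^{(k)}x\mid x\rangle$ are solvable in $\hz \in \mathfrak{so}(n)$. Item~\ref{item0} is then immediate from the definition.

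For item~\ref{item3}, the key step is to show that $d\phi$ carries the horizontal left-invariant frame on $\Ge_n$ to that on $\Ge$. Since $\phi$ is a group morphism, $d\phi$ commutes with left-translation, so it suffices to check at the identity: $d\phi_0(\partial_{x_i}) = \partial_{x_i}$ holds because $\phi$ preserves the horizontal coordinate and each $\psi_k(x,z) = \frac{1}{2}\Tr(D^{(k)}z) + \frac{1}{4}\langle S^{(k)}x\mid x\rangle$ has vanishing $x$-gradient at $x=0$. By left-invariance this upgrades to $d\phi_g(\bX_i(g)) = \bX_i(\phi(g))$ for every $g$; applying this pointwise to the Stratonovich SDE~\eqref{StratoMBCarnot} and using the chain rule for Stratonovich integrals then shows that $\phi(\bbb_t)$ solves~\eqref{StratoMBCarnot2}, hence is a Brownian motion on $\Ge$.

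Items~\ref{item1} and~\ref{item2} follow from the fact that $d\phi$ is a fibrewise isometry between the horizontal subbundles, since on both groups the subRiemannian inner product is defined by declaring $(\bX_1,\ldots,\bX_n)$ orthonormal. For~\ref{item1}, any horizontal curve $\gamma$ from $g$ to $\tg$ in $\Ge_n$ pushes forward to a horizontal curve $\phi\circ\gamma$ from $\phi(g)$ to $\phi(\tg)$ in $\Ge$ of the same length, giving $d_{cc}(\phi(g),\phi(\tg)) \leq d_{cc}(g,\tg)$. For~\ref{item2}, fix any $g \in \phi^{-1}(a)$ (non-empty by surjectivity) and take a minimizing horizontal curve $\gamma:[0,1]\to\Ge$ from $a$ to $\tilde a$, which exists by the classical existence of geodesics on Carnot groups. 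Lift $\gamma$ to the unique horizontal curve $\tilde\gamma$ in $\Ge_n$ starting at $g$ by solving $\dot{\tilde\gamma}(t) = \sum_{i=1}^n \dot x_i(t)\bX_i(\tilde\gamma(t))$; since $d\phi$ intertwines the two horizontal frames, $\phi\circ\tilde\gamma$ satisfies the same horizontal ODE in $\Ge$ as $\gamma$ with the same initial condition, so $\phi(\tilde\gamma(1)) = \tilde a$. Setting $\tg := \tilde\gamma(1)$, the two curves have equal length, hence $d_{cc}(g,\tg) \leq d_{cc}(a,\tilde a)$; combined with~\ref{item1} this gives equality.

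The main obstacle I anticipate is the correct bookkeeping of the symmetric parts $S^{(k)}$ in the definition of $\phi$: the free group $\Ge_n$ only carries a skew-symmetric vertical coordinate, so projecting naively by $z \mapsto \bigl(\tfrac{1}{2}\Tr(D^{(k)}z)\bigr)_k$ alone fails to be a morphism whenever the $C^{(k)}$ are not themselves skew-symmetric. The corrective quadratic term $\frac{1}{4}\langle S^{(k)}x\mid x\rangle$ is precisely what absorbs the symmetric contribution of the $\Ge$-product; once this is set up, the four items reduce to standard Lie-theoretic and ODE bookkeeping.
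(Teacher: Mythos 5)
Your proof is correct and follows essentially the same route as the paper's, which constructs $\phi$ as the canonical lift morphism, deduces items~\ref{item1}--\ref{item2} by pushing forward and lifting horizontal paths (delegating details to Lemma~6.1 of~\cite{DPS19}), and proves item~\ref{item3} via the Stratonovich chain rule. Your version is more self-contained — you write out the explicit formula $\psi_k(x,z)=\tfrac12\Tr(D^{(k)}z)+\tfrac14\langle S^{(k)}x\mid x\rangle$, verify the morphism identity and surjectivity directly, and supply the lift argument — but there is no conceptual divergence from the paper's argument.
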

\begin{proof}
  During this proof we denote by $(\bX_1,\hdots,\bX_n)$ the horizontal left-invariant vector fields on $\Ge_n$ and by $(\tilde{\bX}_1,\hdots,\tilde{\bX}_n)$ the horizontal left-invariant vector fields on $\Ge$. It is well known that a surjective Lie group morphism $\phi:\Ge_n\to \Ge$ preserving the horizontal coordinate can be constructed such that $\tilde{\bX}_i(\gamma\circ\phi)=\left(\bX_i (\gamma)\right)\circ\phi$ (see \cite{bonfiglioli2007stratified,DPS19}) ($\Ge$ can be lifted from $\Ge_n$). In particular, by considering some convenient horizontal path, we obtain the two first points of Proposition \ref{epimorphisme} (see \emph{Lemma 6.1.} in~\cite{DPS19} for more details).
  
   The last point is obtained by considering the stochastic equation (\ref{StratoMBCarnot})  satisfied by the Brownian motions on $\Ge_n$. As, the equation is given with the Stratonovitch integral and $\ph$ is smooth, we get:
	\begin{equation*}
	    d\ph(\bbb_t)= \left(d\ph\cdot\sum\limits_{i=1}^n\bX_i\right)(\ph(\bbb_t))\circ dB_t^i=\sum\limits_{i=1}^nd\tilde{\bar{X}}_i(\ph(\bbb_t))\circ dB_t^i.
	\end{equation*}
	Then $(\ph(\bbb_t))_t$ satisfies Equation (\ref{StratoMBCarnot2}) for $\Ge$.
\end{proof}
\subsection{Coupling results}\label{Subsec: couplingAllHomogeneous}
From Theorem \ref{thm: successfulGn}, we can obtain a successful coupling of Brownian motions for any homogeneous step $2$ Carnot group $\Ge$.
\begin{cor}\label{cor: couplingAllHomogeneous}
Let $\Ge$ be an homogeneous step $2$ Carnot group of rank $n$. For all $g=(x,z),\tg=(\tx,\tz)\in \Ge$ there exists a successful coupling $(\bbb_t,\tbbb_t)$ of Brownian motions starting from $(g,\tg)$ such that, for all $t\geq 4\beta_n\|x-\tx\|_2^2$:
\begin{align}\label{eq: couplingRateAllHomogeneous}
    \pr(\tau>t)\leq &C_1(n)\frac{\|x-\tx\|_2}{\sqrt{t}}\mathbb{1}_{\{x\neq\tx\}}+\frac{C_2(n)}{{m_1(n)^2}
    }\frac{d_{cc}(g,\tg)^2}{t}\\
    \leq &C_1(n)\frac{d_{cc}(g,\tg)}{\sqrt{t}}\mathbb{1}_{\{x\neq\tx\}}+\frac{C_2(n)}{{m_1(n)^2}
    }\frac{d_{cc}(g,\tg)^2}{t}
\end{align}
with $C_1(n)$ and $C_2(n)$ the explicit constants from Theorem \ref{thm: successfulGn} {and $m_1(n)$ the constant from \eqref{eq: equivSeminorme}}.
\end{cor}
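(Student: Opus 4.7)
The plan is to reduce the general homogeneous case to the free case $\Ge_n$ by lifting through the Lie group morphism $\phi\colon\Ge_n\to\Ge$ provided by Proposition \ref{epimorphisme}. Given $g,\tg\in\Ge$, I would first use item \ref{item2} of Proposition \ref{epimorphisme} to choose distance-preserving lifts $g',\tg'\in\Ge_n$: namely $\phi(g')=g$, $\phi(\tg')=\tg$, and $d_{cc}^{\Ge_n}(g',\tg')=d_{cc}^{\Ge}(g,\tg)$. Since $\phi$ preserves the horizontal coordinate (item \ref{item0}), the horizontal parts of $g'$ and $\tg'$ coincide with $x$ and $\tx$, so $g'^{-1}\star\tg'=(\tx-x,\zeta')$ for some $\zeta'\in\mathfrak{so}(n)$.

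The crucial quantitative input is then obtained from the pseudo-distance equivalence \eqref{eq: equivSeminorme} on $\Ge_n$:
$$
\|\zeta'\|_2\;\leq\;\|\tx-x\|_2^2+\|\zeta'\|_2\;=\;\bd(g',\tg')^2\;\leq\;\frac{d_{cc}(g',\tg')^2}{m_1(n)^2}\;=\;\frac{d_{cc}(g,\tg)^2}{m_1(n)^2}.
$$
Next I would apply Theorem \ref{thm: successfulGn} to $(g',\tg')$ to obtain a successful coupling $(\bbb'_t,\tbbb'_t)_t$ on $\Ge_n$ with coupling time $\tau'$, and set $\bbb_t:=\phi(\bbb'_t)$, $\tbbb_t:=\phi(\tbbb'_t)$. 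By item \ref{item3}, $(\bbb_t)_t$ and $(\tbbb_t)_t$ are Brownian motions on $\Ge$ starting from $g$ and $\tg$, and since $\bbb'_t=\tbbb'_t$ forces $\bbb_t=\tbbb_t$, the coupling time $\tau$ downstairs satisfies $\tau\leq\tau'$. In particular, the projected coupling is a.s.\ successful and $\pr(\tau>t)\leq\pr(\tau'>t)$.

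For $t\geq 4\beta_n\|x-\tx\|_2^2$, a fortiori $t\geq\beta_n\|x-\tx\|_2^2$, so the hypothesis of Theorem \ref{thm: successfulGn} is met upstairs and combining its estimate with the vertical bound above yields
$$
\pr(\tau>t)\;\leq\;C_1(n)\frac{\|x-\tx\|_2}{\sqrt{t}}\mathbb{1}_{\{x\neq\tx\}}+\frac{C_2(n)}{m_1(n)^2}\frac{d_{cc}(g,\tg)^2}{t},
$$
the indicator being automatic because when $x=\tx$ the reflection step in the proof of Theorem \ref{thm: successfulGn} is skipped and the horizontal contribution vanishes. The second stated inequality is then immediate from $\|x-\tx\|_2\leq d_{cc}(g,\tg)$, which is \eqref{distanceHorizontale}. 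The only genuinely new ingredient beyond Theorem \ref{thm: successfulGn} is the availability of a distance-preserving lift, which is exactly what item \ref{item2} of Proposition \ref{epimorphisme} provides; this is also where the factor $m_1(n)^{-2}$ enters the constant.
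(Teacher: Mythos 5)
Your proof is correct and follows the same route as the paper: lift via the distance-preserving morphism $\phi$ from Proposition \ref{epimorphisme}, apply Theorem \ref{thm: successfulGn} upstairs, bound $\|\zeta'\|_2$ by $d_{cc}(g,\tg)^2/m_1(n)^2$ through \eqref{eq: equivSeminorme}, and project the coupling down. (As a minor aside, you actually state the inequality $\pr(\tau>t)\leq\pr(\tau'>t)$ in the correct direction, whereas the paper's proof has a typo reversing it.)
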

\begin{proof}
Let $g=(x,z)$, $\tg=(\tx,\tz)\in \Ge$. Using Proposition \ref{epimorphisme}, we can build a surjective morphism $\phi:\Ge_n\to G$ that preserves the horizontal coordinates. Moreover, there exists $a=(x,v),\tilde{a}=(\tx,\tilde v)\in \Ge_n$ such that:
\begin{equation}\label{egaliteDistance}\phi(a)=g,\  \phi(\tilde{a})=\tilde{g}\text{ and } d_{cc}(g,\tg)=d_{cc}(a,\tilde{a}).
\end{equation}
Using the construction from Theorem \ref{thm: successfulGn}, we construct a successful coupling $(\mathbf{B}_t,\tilde{\mathbf{B}}_t)_t$ of Brownian motions starting from $(a,\tilde{a})$ on $\Ge_n$. Denoting by $S$ the first coupling time of $(\mathbf{B}_t,\tilde{\mathbf{B}}_t)_t$ we have for $t\geq4\beta_n\|x-\tx\|_2^2$:
\begin{equation*}
    \pr(S>t)\leq C_1(n)\frac{\|x-\tx\|_2}{\sqrt{t}}\mathbb{1}_{\{x\neq \tx\}}+ C_2(n)\frac{\|\zeta\|_2}{t}.
\end{equation*}
On $\Ge_n$, using the equivalence of the homogeneous norm (\ref{eq: equivSeminorme})
we have: $\|\zeta\|_2\leq \frac {1}{{m_1(n)}^2}
d_{cc}(a,\tilde{a})^2$
Thus:
\begin{equation}\label{eq: couplingRateHomogeneous}
    \pr(S>t)\leq C_1(n)\frac{\|x-\tx\|_2}{\sqrt{t}}\mathbb{1}_{\{x\neq \tx\}}+ \frac{C_2(n)}{{m_1(n)^2}
    } \frac{d_{cc}(a,\tilde{a})^2}{t}.
\end{equation}
Moreover, from Remark \ref{NB: distanceHorizontale}, we have $\|x-\tx\|_2\leq d_{cc}(a,\tilde a)$.

Then, $\left(\phi(\mathbf{B}_t),\phi(\tilde{\mathbf{B}}_t)\right)_t$ defines a coupling of Brownian motions on $\Ge$ starting at $(g,\tg)$. Denoting by $\tau$ its first coupling time, we have $\pr(S>t)\leq \pr(\tau>t)$. Using (\ref{eq: couplingRateHomogeneous}) together with (\ref{egaliteDistance}), we obtain the expected result.
\end{proof}

As for Corollary \ref{Cor: dtvGnSucces}, by applying the Coupling Inequality (\ref{eq: Chap1Aldous}), we directly obtain an estimate for the Total Variation distance between the laws of the Brownian motion.
Denote by $\mu_t^{(x,z)}$ the law at $t$ of the subRiemannian Brownian motion started at $(x,z)$.
\begin{cor}\label{Cor: dtvGnSucces2}
Let $\Ge$ be an homogeneous, step $2$ Carnot group of rank $n$, $n\geq 2$. For any  $g=(x,z),\tg=(\tx,z)\in \Ge$ and for $t\geq \beta_n \|x-\tx\|^2_2$: 
\begin{equation*}
        d_{TV}\left(\mu_t^{g},\mu_t^{\tg}\right)\le C_1(n)\frac{d_{cc}(g,\tg)}{\sqrt t}
    +\frac{C_2(n)}{{m_1(n)^2}
    }\frac{d_{cc}(g,\tg)^2}t.
\end{equation*}
\end{cor}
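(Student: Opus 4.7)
The plan is to combine the Aldous coupling inequality (\ref{eq: Chap1Aldous}) with the explicit successful coupling constructed in Corollary \ref{cor: couplingAllHomogeneous}. The first step is to note that for any coupling $(\bbb_t,\tbbb_t)_t$ of Brownian motions on $\Ge$ starting from $(g,\tg)$, with first meeting time $\tau:=\inf\{t\geq 0\,|\,\bbb_t=\tbbb_t\}$, the coupling inequality yields
\begin{equation*}
d_{TV}\left(\mu_t^{g},\mu_t^{\tg}\right) \leq \pr(\tau > t).
\end{equation*}
This is purely a general Markov-process fact and imposes no conditions on the geometry.

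The second step is to instantiate this with the coupling from Corollary \ref{cor: couplingAllHomogeneous}, which produces a successful coupling on any homogeneous step $2$ Carnot group satisfying, for $t$ larger than a multiple of $\|x-\tx\|_2^2$,
\begin{equation*}
\pr(\tau>t)\leq C_1(n)\frac{\|x-\tx\|_2}{\sqrt t}\mathbb{1}_{\{x\neq\tx\}} + \frac{C_2(n)}{m_1(n)^2}\frac{d_{cc}(g,\tg)^2}{t}.
\end{equation*}
Then I would invoke the inequality $\|x-\tx\|_2\leq d_{cc}(g,\tg)$ from Remark \ref{NB: distanceHorizontale}, which holds in any step $2$ Carnot group because the projection onto the horizontal layer is $1$-Lipschitz for $d_{cc}$. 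Replacing $\|x-\tx\|_2$ by $d_{cc}(g,\tg)$ and dropping the indicator $\mathbb{1}_{\{x\neq\tx\}}$ yields the claimed bound.

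There is no real obstacle here: once the coupling of Corollary \ref{cor: couplingAllHomogeneous} is available, the statement is a direct two-line application of the Aldous inequality plus the horizontal-distance estimate. The only minor bookkeeping is to check that the regime $t\geq\beta_n\|x-\tx\|_2^2$ stated in the corollary is compatible with the validity regime used in Corollary \ref{cor: couplingAllHomogeneous}; if the bound fails to apply directly for small $t$ in this range, one can absorb the discrepancy into the constants or observe that the right-hand side exceeds $1$ and the inequality is then trivial. All of the substantive work, namely the construction of the non co-adapted coupling on the free group $\Ge_n$ (Theorem \ref{thm: successfulGn}) and its transport through the surjective Lie group morphism $\phi$ of Proposition \ref{epimorphisme}, has already been carried out upstream.
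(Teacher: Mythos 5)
Your proof is correct and follows the same route as the paper, which simply applies the Aldous coupling inequality \eqref{eq: Chap1Aldous} to the successful coupling of Corollary~\ref{cor: couplingAllHomogeneous}. You rightly notice the slight mismatch between the range $t\geq 4\beta_n\|x-\tx\|_2^2$ stated in Corollary~\ref{cor: couplingAllHomogeneous} and the range $t\geq\beta_n\|x-\tx\|_2^2$ claimed here (a point the paper glosses over); your observation that the right-hand side already exceeds $1$ on the intermediate range, combined with the fact that the underlying Theorem~\ref{thm: successfulGn} is in fact valid for $t\geq\beta_n\|x-\tx\|_2^2$, resolves it.
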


\section{Comparison between the first coupling time and the first exit time from a domain}\label{sec: coupling Vs Exit}
In this section, we consider a domain $D$ on $\Ge_n$. For any process $(Y_t)_t$ on $\Ge_n$ we define $\tau_D(Y):=\inf\{t\geq0\ | \ Y_t\notin D\}$. Using the pseudo-distance $\bd$ introduced in Subsection \ref{subsec: pseudo-distance}, for any $g\in D$, we denote $\delta_g=\bd(g,D^c)$. For the coupling $(\bbb_t,\tbbb_t)_t$ constructed in Theorem \ref{thm: successfulGn}, we are going to compare the coupling time $\tau$ with $\tau_D(\bbb)\wedge \tau_D(\tbbb)$ in order to obtain gradient inequalities as in~\cite{CranstonSoblogRn,CranstonRefl,banerjee2017coupling}. As previously we adapt the proof used for the Heisenberg group to all free, step $2$ Carnot groups~\cite{banerjee2017coupling}.
\subsection{Main results}
For $n\geq 2$ and $g=(x,z)$, $\tg=(\tx,\tz)\in \Ge_n$ and $\zeta\in\mathfrak{so}(n)$ such that $g^{-1}\star \tg=(\tx-x,\zeta)$, we define $\hx:=\frac{x+\tx}{2}\in\mathbb{R}^n$ and $\hz:=\frac{z+\tz}{2}\in\mathfrak{so}(n)$. In particular, $x-\hx=\frac{x-\tx}{2}$ and $z-\hz-\frac{1}{2}\hx\symp x=\frac{\zeta}{2}$. For any $\alpha>0$ and $\gamma>0$ real, we also define the set $Q(\alpha,\gamma)\subset \Ge_n$ such that:
\begin{equation}\label{eq: hypercube}
    Q(\alpha,\gamma):=\{(y,v)\in \Ge_n\ | \ \|y-\hx\|_2< \alpha, \|v-\hz-\frac{1}{2}\hx\symp y\|_2< \gamma^2\}.
\end{equation}

Consider $(\bbb_t,\tbbb_t)_t$ the coupling starting at $(g,\tg)$ described in Theorem \ref{thm: successfulGn}. We are first dealing with the problem for $D=Q(\alpha,\gamma)$. We have the following result:
\begin{prop}\label{prop: exitTimeCube}
Suppose that $\|x-\tx\|_2\leq \frac{2}{\sqrt{n-1}}$. Then, for any $0<\alpha\leq\gamma$, there exist some constants $D_1$ and $D_2$, that depend neither on $\alpha,\gamma$ nor on the starting points of the coupling, such that:
    \begin{align*}
    \pr\left(\tau>\tau_{Q(\alpha,\gamma)}(\bbb)\right)&\leq D_1(n,\gamma,\alpha)\|x-\tx\|_2+D_2(n,\gamma,\alpha)\max\left(
    \|\zeta\|_2,\|\zeta\|_2^2\right)
    \end{align*}
    with:
    \begin{align*}
        D_1(n,\gamma,\alpha)&\leq D_1\left(n\sqrt{n}+\frac{n\sqrt{n}}{\alpha}+\frac{\alpha^3}{\gamma^4}n+\frac{{n^{7+\frac{1}{4}}b_n^2}}{\gamma^4}+\frac{{n^{9+\frac{1}{4}}b_n^2}}{\alpha^4}\right);\\
        D_2(n,\gamma,\alpha)&\leq D_2 \left(1+\frac{{n^{6+\frac{1}{2}}b_n^2}}{\gamma^4}+\frac{{n^{8+\frac{1}{2}}b_n^2}}{\alpha^4}\right);\\
        b_n&{\leq 2\sqrt{2\pi n}.}
    \end{align*}
    The same inequality is true for $\pr\left(\tau>\tau_{Q(\alpha,\gamma)}(\tbbb)\right)$.
\end{prop}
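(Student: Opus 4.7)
The plan is to fix a time horizon $T>0$ (to be optimised at the end in terms of $n,\alpha,\gamma$, and possibly also on $\|\zeta\|_2$) and decompose
\begin{equation*}
\pr\!\left(\tau>\tau_{Q(\alpha,\gamma)}(\bbb)\right)\ \leq\ \pr(\tau>T)\ +\ \pr\!\left(\tau_{Q(\alpha,\gamma)}(\bbb)\leq T\right).
\end{equation*}
The first summand is handled directly by Theorem \ref{thm: successfulGn}, provided $T\geq \beta_n\|x-\tx\|_2^2$, contributing the terms $C_1(n)\|x-\tx\|_2/\sqrt T$ and $C_2(n)\|\zeta\|_2/T$. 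The crucial point for the second summand is that, although the coupling is intricate, the marginal law of $(\bbb_t)_t$ is that of a standard subRiemannian Brownian motion on $\Ge_n$ starting at $g$. Hence the exit estimate only requires standard martingale tools applied to this marginal.

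Writing $\bbb_t=(X_t,z_t)$, the event $\{\tau_{Q(\alpha,\gamma)}(\bbb)\leq T\}$ is contained in the union of the horizontal event $\{\sup_{t\leq T}\|X_t-\hx\|_2\geq \alpha\}$ and the vertical event $\{\sup_{t\leq T}\|Y_t\|_2\geq\gamma^2\}$, where $Y_t:=z_t-\hz-\tfrac12\hx\symp X_t$. For the horizontal event, triangle inequality together with $\|x-\hx\|_2=\tfrac12\|x-\tx\|_2\leq 1/\sqrt{n-1}$ reduces the problem to an estimate on the submartingale $\|X_t-x\|_2^2$; Doob's maximal inequality then provides a bound of the form $nT/\alpha^2 + \|x-\tx\|_2/\alpha$ (up to explicit constants depending on $n$).

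For the vertical event, Itô's formula applied to the identity $Y_t=z_t-\hz-\tfrac12\hx\symp X_t$ gives
\begin{equation*}
dY_t^{i,j}=\tfrac12\bigl((X_t^i-\hx_i)dX_t^j-(X_t^j-\hx_j)dX_t^i\bigr),
\end{equation*}
so $Y$ is a continuous $\mathfrak{so}(n)$-valued martingale with $Y_0=\zeta/2$ and aggregated quadratic variation $\sum_{i<j}d[Y^{i,j}]_t=\tfrac{n-1}{4}\|X_t-\hx\|_2^2\,dt$. Localising at the horizontal exit time (on which $\|X_t-\hx\|_2\leq \alpha$) and applying Doob's $L^2$ inequality to $Y_t-Y_0$ then yields a vertical exit bound of order $\|\zeta\|_2/\gamma^2 + \sqrt{(n-1)\alpha^2 T}/\gamma^2$.

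Combining everything,
\begin{equation*}
\pr\!\left(\tau>\tau_{Q(\alpha,\gamma)}(\bbb)\right)\ \lesssim\ \frac{C_1(n)\|x-\tx\|_2}{\sqrt T}+\frac{C_2(n)\|\zeta\|_2}{T}+\frac{nT}{\alpha^2}+\frac{\|x-\tx\|_2}{\alpha}+\frac{\sqrt{n\alpha^2T}}{\gamma^2}+\frac{\|\zeta\|_2}{\gamma^2},
\end{equation*}
and optimising $T$ against the combined $T$-dependent contributions (roughly $T$ of order $\gamma^4/(n\alpha^2)$, corrected so that also $T\geq\beta_n\|x-\tx\|_2^2$) produces the stated forms of $D_1(n,\gamma,\alpha)$ and $D_2(n,\gamma,\alpha)$. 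The appearance of $\max(\|\zeta\|_2,\|\zeta\|_2^2)$ rather than just $\|\zeta\|_2$ arises by splitting on whether $\|\zeta\|_2\leq 1$ or $\|\zeta\|_2\geq 1$: in one regime the dominant vertical contribution is $\|\zeta\|_2/\gamma^2$, in the other a term coming from the coupling estimate $\|\zeta\|_2/T$ evaluated at the chosen $T$ scales like $\|\zeta\|_2^2$. The symmetry statement for $\tbbb$ follows by symmetry in $(g,\tg)$: the hypothesis $\|\tx-\hx\|_2=\|x-\hx\|_2\leq 1/\sqrt{n-1}$ is automatic.

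The main obstacle is the careful bookkeeping of the $n$-dependence in the vertical estimate. The martingale $Y$ takes values in a space of dimension $n(n-1)/2$, and a naive coordinate-by-coordinate use of Doob loses a factor $n$ at each step; the aggregate quadratic variation identity above keeps the dependence polynomial but one still needs to track constants through the optimisation of $T$, which is what produces the somewhat unwieldy explicit bounds $n^{7+1/4}b_n^2/\gamma^4$ and $n^{9+1/4}b_n^2/\alpha^4$ in $D_1$ (and their counterparts in $D_2$).
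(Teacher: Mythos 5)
Your decomposition
\[
\pr\!\left(\tau>\tau_{Q(\alpha,\gamma)}(\bbb)\right)\ \leq\ \pr(\tau>T)\ +\ \pr\!\left(\tau_{Q(\alpha,\gamma)}(\bbb)\leq T\right)
\]
is valid as an inequality but cannot produce a bound of the required form, and this is a genuine gap. The target bound is linear in $\|x-\tx\|_2$ and $\|\zeta\|_2$ (hence vanishes as $g\to\tg$), whereas the second summand in your decomposition contains the term of order $nT/\alpha^2$ (your horizontal exit estimate) and the term of order $\sqrt{n}\,\alpha\sqrt{T}/\gamma^2$ (your vertical estimate), neither of which depends on $\|x-\tx\|_2$ or $\|\zeta\|_2$ at all. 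For them to vanish you must send $T\to 0$, but then $C_1(n)\|x-\tx\|_2/\sqrt T$ blows up relative to $\|x-\tx\|_2$. Concretely: to have $nT/\alpha^2\lesssim \|x-\tx\|_2$ you need $T\lesssim \alpha^2\|x-\tx\|_2/n$, and then $\|x-\tx\|_2/\sqrt T\gtrsim \sqrt{n\|x-\tx\|_2}/\alpha$, which is of order $\sqrt{\|x-\tx\|_2}$, not $\|x-\tx\|_2$. No choice of $T$ (fixed or adapted to the data) resolves this tension. The value $T\asymp\gamma^4/(n\alpha^2)$ you suggest makes $nT/\alpha^2\asymp \gamma^4/\alpha^4\geq 1$, a constant lower bound that contradicts the proposition when $g$ and $\tg$ are close.

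The issue is structural, not a matter of constants: a time-threshold split treats "coupling has not yet happened" and "process has left the cube" as independent phenomena, but the proposition rests precisely on their coupling. The paper's proof instead decomposes along the random stopping times $\tau_0<\tau_1<\cdots<\tau_{n-1}=\tau$ intrinsic to the coupling construction, defines increments $Y_i,V_i$ on each $[\tau_{i-1},\tau_i]$, and bounds the exit probability by $\pr\bigl(\bigcup_i(A_i\cup\Gamma_i)\bigr)$. The linear dependence on the initial data then comes from (i) Lemma \ref{Lemme CranstonSortieRef}, which gives $\pr(\tau_0>S(a))\lesssim \|x-\tx\|_2/a$ and $\esp[\tau_0\wedge S(a)]\lesssim \|x-\tx\|_2\,a$ for the reflection phase, and (ii) Lemma \ref{lemme Esp temps de couplage}, which shows $\esp[(\tau-\tau_0)^2\wedge M^2]$ is itself $\lesssim\|x-\tx\|_2+\|\zeta\|_2$, fed through the B-D-G estimate of Lemma \ref{lemme: BDGNonCoAdapte}. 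It is this last point — that the post-reflection coupling time is small precisely when the initial data are small — that makes the argument close, and it cannot be recovered from the marginal law of $\bbb$ alone. The non-co-adapted nature of the coupling also means that $\tau$ is not a stopping time for the natural filtration of $\bbb$, so the enlarged filtrations $(\mathcal{F}^\star_t)$ and $(\mathcal{F}^{\star\star}_t)$ from the paper's Lemma \ref{lemme: BDGNonCoAdapte} are essential to justify the B-D-G estimates on $[\tau_{i-1},\tau_i]$; your proposal has no mechanism to address this.

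Your individual martingale/Doob estimates for the horizontal and vertical exit events are reasonable in isolation, and the observation about the aggregate quadratic variation of $U$ is correct, but they would need to be applied over the random coupling intervals rather than a fixed window $[0,T]$ to obtain the stated scaling.
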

Proposition \ref{prop: exitTimeCube} will be proven in Subsection \ref{subsec: preuveExit time}. For now we give a direct consequence for the exit time of any domain $D$.
\begin{thm}\label{thm: sortieDomaine}
Let $g=(x,z),\tg=(\tx,\tz)\in \Ge_n$ and $g^{-1}\star\tg=(\tx-x,\zeta)$ such that $\bd(g,\tg)\leq\frac{\sqrt{2}\delta_g}{3}$ and $\|x-\tx\|_2\leq \frac{2}{\sqrt{n-1}}$
. Denoting by $(\bbb_t,\tbbb_t)_t$ the coupling starting at $(g,\tg)$ described in Theorem \ref{thm: successfulGn}, we have:
\begin{align*}
    \pr\left(\tau>\tau_D(\bbb)\wedge\tau_D(\tbbb)\right)&\leq {D_1}\times\left(n\sqrt{n}+\frac{n\sqrt{n}}{\delta_g}+\frac{{n^{10+\frac{1}{4}}}}{\delta_g^4}\right)\|x-\tx\|_2\\
    &+{D_2}\times \left(1+\frac{{n^{9+\frac{1}{2}}}}{\delta_g^4}\right)\max\left(
    \|\zeta\|_2,\|\zeta\|_2^2\right)
\end{align*}
with ${D}_1$ and ${D_2}$ two constant that depend neither on $g,\tg$ nor on $n$.
\end{thm}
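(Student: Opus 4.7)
The plan is to reduce Theorem \ref{thm: sortieDomaine} to Proposition \ref{prop: exitTimeCube} by sandwiching $D$ with a cube $Q(\alpha,\gamma)$. My goal is to pick $\alpha = \gamma$ proportional to $\delta_g$ so that
\begin{equation*}
g,\tg \in Q(\alpha,\gamma) \subset D.
\end{equation*}
Once these two inclusions are in hand, $\tau_{Q(\alpha,\gamma)}(\bbb) \leq \tau_D(\bbb)$ and similarly for $\tbbb$, so a union bound gives
\begin{equation*}
\pr\big(\tau > \tau_D(\bbb)\wedge\tau_D(\tbbb)\big) \leq \pr\big(\tau > \tau_{Q(\alpha,\gamma)}(\bbb)\big) + \pr\big(\tau > \tau_{Q(\alpha,\gamma)}(\tbbb)\big),
\end{equation*}
and Proposition \ref{prop: exitTimeCube} applies directly (the hypothesis $\|x-\tx\|_2 \leq 2/\sqrt{n-1}$ is inherited).

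The only delicate point is $Q(\alpha,\gamma) \subset D$. I would argue directly in coordinates, rather than through the pseudo-triangular inequality \eqref{eq: pseudo-triangle}, whose constant $3/2$ turns out to be too lossy when combined with the tight hypothesis $\bd(g,\tg) \leq \tfrac{\sqrt{2}\delta_g}{3}$. For $(y,v) \in Q(\alpha,\gamma)$, write $y = \hat x + \tilde y$ and $v = \hat z + \tfrac{1}{2}\hat x \symp y + \tilde v$ with $\|\tilde y\|_2 < \alpha$ and $\|\tilde v\|_2 < \gamma^2$. A direct computation using $\tz - z = \zeta + \tfrac{1}{2} x \symp \tx$ and the skew-symmetry of $\symp$ yields
\begin{equation*}
g^{-1}\star (y,v) = \left(\tfrac{\tx-x}{2} + \tilde y,\ \tfrac{\zeta}{2} + \tilde v + \tfrac{1}{4}(\tx-x)\symp \tilde y \right).
\end{equation*}
Taking the pseudo-norm and using $\|x-\tx\|_2^2 + \|\zeta\|_2 \leq \tfrac{2}{9}\delta_g^2$ yields
\begin{equation*}
\bd(g,(y,v))^2 \leq \tfrac{1}{9}\delta_g^2 + \tfrac{5\sqrt{2}}{12}\delta_g\,\alpha + \alpha^2 + \gamma^2,
\end{equation*}
which is strictly less than $\delta_g^2$ for, e.g., $\alpha = \gamma = \tfrac{2}{5}\delta_g$; the ball $\{h : \bd(g,h) < \delta_g\}$ is contained in $D$ by definition of $\delta_g$. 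With this same choice, $g,\tg \in Q(\alpha,\gamma)$ is immediate from $\|x - \hat x\|_2 = \|x-\tx\|_2/2 \leq \sqrt{2}\delta_g/6 < \alpha$ and $\|z - \hat z - \tfrac{1}{2}\hat x \symp x\|_2 = \|\zeta\|_2/2 \leq \delta_g^2/9 < \gamma^2$, and the analogous identities at $\tg$ follow by the symmetry of $\hat x, \hat z$ in $x,\tx$ and $z,\tz$.

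The last step is bookkeeping: substitute $\alpha = \gamma = \tfrac{2}{5}\delta_g$ into $D_1(n,\gamma,\alpha)$ and $D_2(n,\gamma,\alpha)$ and use $b_n \leq 2\sqrt{2\pi n}$, so $b_n^2 \leq 8\pi n$. The leading terms $\tfrac{n^{9+1/4}b_n^2}{\alpha^4}$ and $\tfrac{n^{8+1/2}b_n^2}{\alpha^4}$ yield exactly $\tfrac{n^{10+1/4}}{\delta_g^4}$ and $\tfrac{n^{9+1/2}}{\delta_g^4}$ up to absolute constants, while $n\sqrt n$, $n\sqrt n/\alpha$ and $\alpha^3 n/\gamma^4 = n/\alpha$ are kept as written or absorbed into the $n\sqrt n/\delta_g$ term, after which all numerical factors (including the $2$ from the union bound and $(2/5)^{-k}$) are lumped into the new constants $D_1,D_2$ of the statement. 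The main obstacle is therefore not the final computation but the sharp geometric inclusion $Q(\alpha,\gamma) \subset D$: the natural reflex of applying the pseudo-triangular inequality is insufficient, and the explicit expansion of $g^{-1}\star(y,v)$ above is really needed to exploit the hypothesis $\bd(g,\tg) \leq \tfrac{\sqrt{2}\delta_g}{3}$ optimally.
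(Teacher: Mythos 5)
Your proof is correct, and the skeleton (find a box $Q(\alpha,\gamma)\subset D$ with $\alpha=\gamma\propto\delta_g$, union bound, then invoke Proposition \ref{prop: exitTimeCube}) is the same as in the paper. However, your stated motivation for the coordinate expansion of $g^{-1}\star(y,v)$ — that the pseudo-triangular inequality with constant $3/2$ is ``too lossy'' given $\bd(g,\tg)\leq\tfrac{\sqrt 2\,\delta_g}{3}$ — is factually wrong. The paper does use the pseudo-triangular inequality, but routed through the midpoint $(\hx,\hz)$ which is the centre of $Q$: one has $\bd\big((\hx,\hz),g\big)\leq\tfrac{1}{\sqrt 2}\bd(g,\tg)\leq\tfrac{\delta_g}{3}$, and for $(y,v)\in Q\big(\tfrac{\delta_g}{3\sqrt 2},\tfrac{\delta_g}{3\sqrt 2}\big)$ the definition of $Q$ gives $\bd\big((\hx,\hz),(y,v)\big)<\sqrt{\alpha^2+\gamma^2}=\tfrac{\delta_g}{3}$, so the pseudo-triangle yields $\bd\big(g,(y,v)\big)<\tfrac{3}{2}\cdot\tfrac{2\delta_g}{3}=\delta_g$ — exactly tight, but with strict inequality where needed, so the inclusion $Q\subset D$ holds. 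Your direct expansion $g^{-1}\star(y,v)=\big(\tfrac{\tx-x}{2}+\tilde y,\ \tfrac{\zeta}{2}+\tilde v+\tfrac14(\tx-x)\symp\tilde y\big)$ is a valid and somewhat more explicit alternative; it happens to permit a slightly larger box ($\alpha=\gamma=\tfrac{2}{5}\delta_g$ vs. the paper's $\tfrac{\delta_g}{3\sqrt 2}$), but since $D_1,D_2$ are not tracked explicitly this gains nothing. Also note that the check $g,\tg\in Q$ is harmless but unnecessary: all that is used is $Q\subset D$, since the conclusion of Proposition \ref{prop: exitTimeCube} becomes trivially $\geq 1$ whenever the starting point lies outside $Q$ and the coupling is not already complete.
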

\begin{proof}[Proof of Theorem \ref{thm: sortieDomaine}]
By construction of $(\hx,\hz)$, we have:
\begin{equation*}\bd\left((\hx,\hz),(x,z)\right)=\bd\left((\hx,\hz),(\tx,\tz)\right)\leq\frac{1}{\sqrt{2}}\bd\left((\tx,\tz),(x,z)\right)\leq \frac{\delta_g}{3}.
\end{equation*}
Using the pseudo triangular inequality (\ref{eq: pseudo-triangle}), for any $(y,v)\in \Ge_n$, we have:
\begin{equation}
\bd\left((y,v),(x,z)\right)\leq {\frac{3}{2}}\Big(\bd\left((y,v),(\hx,\hz)\right)+\bd\left((\hx,\hz),(x,z)\right)\Big).
\end{equation}
Thus, if $(y,v)\in Q\left(\frac{\delta_g}{3\sqrt{2}},\frac{\delta_g}{3\sqrt{2}}\right)$, then $\bd\left((y,v),(x,z)\right)< {\frac{3}{2}}\left(\sqrt{2}\times\frac{\delta_g}{3\sqrt{2}}+\frac{\delta_g}{3}\right)= \delta_g$.
Thus, $Q:=Q\left(\frac{\delta_g}{3\sqrt{2}},\frac{\delta_g}{3\sqrt{2}}\right)\subset D$.
We have:
\begin{align*}
     \pr\left(\tau>\tau_D(\bbb)\wedge\tau_D(\tbbb)\right)&\leq  \pr\left(\tau>\tau_Q(\bbb)\wedge\tau_Q(\tbbb)\right)\\
     &\leq  \pr\left(\tau>\tau_Q(\bbb)\right)+\pr\left(\tau>\tau_Q(\tbbb)\right).
\end{align*}
As $\|x-\tx\|_2\leq \frac{2}{\sqrt{n-1}}$, we can apply Proposition \ref{prop: exitTimeCube}. We get the expected result.
\end{proof}

\subsection{Proof of Proposition \ref{prop: exitTimeCube}}\label{subsec: preuveExit time}
\begin{proof}[Proof of Proposition \ref{prop: exitTimeCube}]
It is enough to prove the proposition for $(\bbb_t)_t$.
We first notice that 
\begin{equation*}
    \tau_{Q(\alpha,\gamma)}(\bbb)=\inf\{t>0\ |\ \|X_t-\hx\|_2\geq \alpha\text{ or }\|U_t\|_2\geq \gamma^2\}
\end{equation*} with $U_t:=z_t-\hz-\frac{1}{2}\hx\symp X_t$. In particular, we have $dU_t=\frac{1}{2}(X_t-\hx)\symp dX_t$ and $U_0=\frac{z-\tz}{2}-\frac{1}{4}x\symp\tx=\frac{\zeta}{2}$.
We use the notations from the the proofs of Section \ref{Section:Brownian BridgeGn}:
\begin{itemize}
    \item As in the proof of Theorem \ref{thm: successfulGn}, we denote: $\tau_0:=\inf\{t\geq0\ |\ X_t=\tX_t\}$, in particular on $[0,\tau_0]$, the processes $(X_t)_{t\in[0,\tau_0]}$ and $(\tX_t)_{t\in[0,\tau_0]}$ are coupled by reflection.
    \item As in the proof of Theorem \ref{Couplage par lignes}, we denote for $1\leq i\leq n-1$:
\begin{equation*} 
\tau_{i}:=\inf\left\{t>\tau_{i-1}\ | \  \uX_t=\utX_t \text{ and }\uzeta_t^{i,j}=\tilde{\uzeta}_t^{i,j} \ \forall j\in\{1,\hdots n\}\setminus\{i\}\right\}.
\end{equation*}
\end{itemize}
We remind that $\tau:=\inf\{t>0\ | \ \bbb_t=\tbbb_t\}$ is exactly equal to $\tau_{n-1}$. We also denote $\tau_{-1}:=0$. As we have a different description of the coupling on each of the intervals of time $[0,\tau_0]$, $[\tau_0,\tau_1]$, ..., $[\tau_{n-2},\tau_{n-1}]$, it will be convenient to decompose the process $(\bbb_t)_t=(X_t,z_t)_t$ following this time decomposition. One of the main tool that will be used is the Burkholder-Davis-Gundy Inequality that we will denote afterward "B-D-G Inequality". Then it will be convenient to consider martingales starting from $0$ on each of these intervals of time. Keeping these constraints in mind, we define:
\begin{itemize}
    \item for $t\in [0,\tau_0]$, $Y_0(t):=X_t-\hx$ and $V_0(t):=U_t$;
    \item for $1\leq i\leq n-1$ and $t\in[\tau_{i-1},\tau_{i}]$, $Y_i(t):=X_t-X_{\tau_{i-1}}$ and $V_i(t):=\frac{1}{2}\int_{\tau_{i-1}}^t (X_s-X_{\tau_{i-1}}) \symp dX_s\in\mathfrak{so}(n)$. In particular, we have: \begin{equation}\label{V_i}
        V_i^{j,l}(t)=\frac{1}{2}\left(\int_{\tau_{i-1}}^t\left(X_s^j-X_{\tau_{i-1}}^j\right)dX_s^l-\int_{\tau_{i-1}}^t\left(X_s^l-X_{\tau_{i-1}}^l\right)dX_s^j\right).
    \end{equation}
\end{itemize}
Proceeding by induction one can then check that, for all $0\leq i\leq n-1$ and $t\in[\tau_{i-1},\tau_{i}]$, we have:
\begin{equation}\label{eq: decompositionQ}
    (X_t-\hx,U_t)=\big(Y_0(\tau_0),V_0(\tau_0)\big)\star\hdots\star \big(Y_{i-1}(\tau_{i-1}),V_{i-1}(\tau_{i-1})\big)\star\big(Y_{i}(t),V_{i}(t)\big).
\end{equation}
Moreover, using the fact that we iterate $i+1$ times the operation $\star$, we have for all $1\leq i\leq n-1$:
\begin{align}\label{eq: decompositionQ2}
    (X_t-\hx,U_t)&=\left(\sum\limits_{j=0}^{i-1}Y_j(\tau_j)+Y_{i}(t),\right.\notag\\
    &\left.\sum\limits_{j=0}^{i-1}V_j(\tau_j)+V_{i}(t)+\frac{1}{2}\sum\limits_{j=0}^{i-1} Y_j(\tau_j)\symp\left(\sum\limits_{l=j+1}^{i-1}Y_l(\tau_l)+Y_{i}(t)\right)\right).
\end{align}

 We consider two sequences $(\alpha(i))_{0\leq i\leq n-1}$ and $(\gamma(i))_{0\leq i\leq n-1}$ satisfying:
 \begin{equation}\label{eq: ConditionsSuites}
 \sum\limits_{i=0}^{n-1}\alpha(i)<\alpha\text{ and } \sum\limits_{i=0}^{n-1}\gamma(i)^2+\frac{1}{2}\sum\limits_{0\leq i<j\leq n-1 }\alpha(i)\alpha(j)<\gamma^2.\end{equation} 

 For all $0\leq i\leq n-1$, we define the events:
\begin{equation*}
    A_i:=\{\omega, \ \sup\limits_{\tau_{i-1}\leq t\leq \tau_{i}}\|Y_i(t)\|_2> \alpha(i)\}\text{ and }
   \Gamma_i:=\{\omega,\ \sup\limits_{\tau_{i-1}\leq t\leq \tau_{i}}\|V_i(t)\|_2> \gamma(i)^2\}.
\end{equation*} 
Using (\ref{eq: decompositionQ2}), we remark that under the event $\bigcap\limits_{i=0}^{n-1}\left(A_i^c\cap \Gamma_i^c\right)$, we have for all $0\leq i\leq n-1$ and for all $t\in [\tau_{i-1},\tau_i]$:
\begin{equation}
    \|X_t-\hx\|_2\leq \sum\limits_{j=0}^{i}\alpha(j)<\alpha\text{ and }
    \|U_t\|_2\leq\sum\limits_{j=0}^{i}\gamma(j)^2+\frac{1}{2}\sum\limits_{0\leq j<l\leq i }\alpha(j)\alpha(l)<\gamma^2.
\end{equation}
In other words, $\bbb_t\in Q(\alpha,\gamma)$ for all $t\in[0,\tau]$.
Thus $\pr\left(\tau>\tau_{Q(\alpha,\gamma)}(\bbb)\right)\leq \pr\left(\bigcup\limits_{i=0}^{n-1}\left(A_i \cup \Gamma_i\right)\right)$.

To study these probabilities we will use the three following lemmas. 

The first one contains results on the reflection coupling in $\mathbb{R}^n$ and is due to Cranston. These results can be found in the proof of \emph{Theorem 1} in~\cite{CranstonSoblogRn}. Using the Euclidean norm $\|\cdot\|_2$ instead of the maximum norm $\|\cdot\|_{\infty}$ the results can be presented as follows:
\begin{lemme}\label{Lemme CranstonSortieRef}
For any $a>0$, denote $S(a):=\inf\{t\geq0 \ |\ \|X_t-\hx\|_{2}> a\}$. With the previous notations:
\begin{equation*}
    \pr\left(\tau_0>S(a)\right)\leq \frac{n\sqrt{n}\|x-\tx\|_2}{2a} \text{ and } \esp\left[\tau_0\wedge S(a)\right]\leq \frac{\|x-\tx\|_2}{2}a.
\end{equation*}
\end{lemme}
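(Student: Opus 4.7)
The plan is to work in the rotated frame, where the reflection-coupling dynamics become transparent. Set $\eta_t := \uX_t - P_h\hx$. Since $\uX^i = \utX^i$ for $i \geq 2$ and $d\utX^1 = -d\uX^1$, the process $\eta$ is a standard $n$-dimensional Brownian motion starting from $e := \tfrac{\|x-\tx\|_2}{2}\,\mathbf{e}_1$, and its first coordinate $\xi_t := \eta_t^{(1)}$ is a $1$-dimensional Brownian motion with $\xi_0 = |e|$. Because $P_h \in \mathcal{O}(n)$, one checks that $\tau_0 = \inf\{t : \xi_t = 0\}$ and $S(a) = \inf\{t : |\eta_t|_2 > a\}$.

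For the expectation bound, I apply optional stopping to the martingales $\xi_t$ and $\xi_t^2 - t$ at the bounded stopping time $T := \tau_0 \wedge S(a)$. Since $\xi_t \geq 0$ on $[0,\tau_0]$ and $|\xi_T| \leq |\eta_T|_2 \leq a$, we have $0 \leq \xi_T \leq a$, so $\xi_T^2 \leq a\,\xi_T$. Taking expectations and using $\esp[\xi_T] = \xi_0$,
\[
\esp[T] = \esp[\xi_T^2] - \xi_0^2 \leq a\,\esp[\xi_T] - \xi_0^2 = a\xi_0 - \xi_0^2 \leq \frac{\|x-\tx\|_2}{2}\,a.
\]

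For the probability bound, the key is the strong Markov property at $\tau_0$ together with reflection of the first coordinate across $\{x_1 = 0\}$: on $\{\tau_0 \leq S(a)\}$, reflecting the post-$\tau_0$ trajectory of $\xi$ produces a Brownian motion with the same law (and leaves $S(a)$ invariant by radial symmetry of the ball). This gives $\esp^e[\xi_{S(a)}\mathbb{1}_{\{\tau_0 \leq S(a)\}}] = 0$ and the identity
\[
\pr(\tau_0 > S(a)) = \pr^e(\xi_{S(a)} > 0) - \pr^{-e}(\xi_{S(a)} > 0),
\]
which I evaluate via the Poisson kernel $P(y,z) = (a^2 - |y|^2)/(\omega_n a |z-y|^n)$ on $\partial B(0,a)$. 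After parameterizing $z_1 = ac$ with $c \in [-1,1]$ and using the elementary identity $\int_0^1 c(1-c^2)^{(n-3)/2}\,dc = 1/(n-1)$, the radial difference $|z-e|^{-n} - |z+e|^{-n}$ is handled by the mean value theorem applied to $u \mapsto u^{-n/2}$.

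The main obstacle is pinning down the dimensional constant $n\sqrt{n}$: a crude pointwise bound $(a-|e|)^{-n-2}$ on the kernel produces an exponential factor $2^n$, so to obtain a polynomial constant I split into two regimes. When $\|x-\tx\|_2/(2a) \geq 1/n^{3/2}$, the trivial bound $\pr \leq 1$ already absorbs into $n\sqrt{n}\,\|x-\tx\|_2/(2a)$. For small $\|x-\tx\|_2/a$, I Taylor-expand the harmonic function $y\mapsto \pr^y(\tau_0 > S(a))$ around $0$: its first-order coefficient is $\partial_{y_1}|_{y=0} = n\omega_{n-1}/((n-1)\omega_n) = O(\sqrt{n})$ via the derivative of the Poisson kernel and the surface-area ratio $\omega_{n-1}/\omega_n \sim \sqrt{n}$, yielding $\pr \leq C\sqrt{n}\,|e|/a + O(|e|^2/a^2)$; the quadratic remainder is absorbed by the extra factor of $n$ in the claim, completing the argument.
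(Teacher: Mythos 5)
Your expectation bound is correct and clean: optional stopping applied to $\xi_t$ and $\xi_t^2 - t$ at $T = \tau_0 \wedge S(a)$, using $0 \le \xi_T \le a$ to get $\xi_T^2 \le a\xi_T$, is exactly the right argument. (One should note $T$ is integrable and $\xi_{t\wedge T}$ is bounded, so optional stopping applies.) The reflection identity $\pr(\tau_0 > S(a)) = \pr^e(\xi_{S(a)} > 0) - \pr^{-e}(\xi_{S(a)} > 0)$ is also correct.

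For the probability bound, however, you take a genuinely different route from the one the paper invokes, and there is a real gap in it. The paper cites Cranston (\emph{Theorem 1} of~\cite{CranstonSoblogRn}), which works with the $L^\infty$ ball, where the argument is elementary: the $n$ coordinates of $\eta_t$ are independent Brownian motions, so a union bound gives
\begin{equation*}
\pr\bigl(\tau_0 > S_\infty(b)\bigr) \le \frac{\rho}{b} + (n-1)\,\pr\bigl(S_2 < \tau_0\bigr),
\end{equation*}
with $\rho = \tfrac{1}{2}\|x-\tx\|_2$, and the cross term is handled by independence and Lemma~\ref{hittingTime}: $\pr(S_2 < \tau_0) \le \sqrt{2/\pi}\,\rho\,\esp[S_2^{-1/2}] = \rho/b$ (since $\esp[T^{-1/2}] = \sqrt{\pi/2}$ for the unit exit time). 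This gives exactly $\pr(\tau_0 > S_\infty(b)) \le n\rho/b$, and the inclusion $B_2(0,a) \supset B_\infty(0, a/\sqrt{n})$ then yields the stated $n\sqrt{n}\,\|x-\tx\|_2/(2a)$ with the constant exactly matching. Your Poisson-kernel route on the $L^2$ ball, by contrast, requires you to actually bound the Taylor remainder of the harmonic function $h(y) = \pr^y(\tau_0 > S(a))$. The phrase \emph{``the quadratic remainder is absorbed by the extra factor of $n$''} is not a proof: you need an interior estimate for $\sup_{[0,e]}\|\nabla^2 h\|$ with an explicit dependence on $n$ (it is of order $n^2/a^2$), and you then need to verify that the arithmetic $c_1\sqrt{n}\,\rho/a + c_2 n^2 \rho^2/a^2 \le n^{3/2}\rho/a$ holds throughout your regime $\rho/a < n^{-3/2}$ -- which in fact fails for small $n$ unless one adjusts the regime threshold and tracks the constants carefully. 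As written, the step is unsubstantiated. If you want to keep the ball, either import the second-derivative interior estimate explicitly and redo the threshold, or (much simpler) switch to the cube-plus-union-bound argument and compare the two norms at the end, as the paper's cited source does.

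Two minor slips: your stated first-order coefficient $n\omega_{n-1}/((n-1)\omega_n)$ is missing a factor of $2$ and the dimensional $1/a$; and the mean-value-theorem computation of $|z-e|^{-n}-|z+e|^{-n}$ is vestigial once you switch to the Taylor argument and could be dropped.
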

The second lemma can be seen as the generalisation in higher dimension of \emph{Lemma 4.1} from~\cite{banerjee2017coupling}:
\begin{lemme}\label{lemme: BDGNonCoAdapte}
For any $1\leq i\leq n-1$ and $M>0$ there exists a constant $C$ (depending neither on the starting points of the processes, nor on the rank $n$ of the group) such that:
\begin{align}
    \esp\left[\mathbb{1}_{\{\tau_i-\tau_{i-1}<M\}}\sup\limits_{\tau_{i-1}\leq t \leq\tau_i}\|Y_i(t)\|_2^4\right]&\leq Cn^2\esp\left[\left(\tau_{i}-\tau_{i-1}\right)^2\wedge M^2 \right];\label{eq: MajY}\\
   \esp\left[\mathbb{1}_{\{\tau_i-\tau_{i-1}<M\}}\sup\limits_{\tau_{i-1}\leq t \leq\tau_i}\|V_i(t)\|_2^2\right]&\leq Cn(n-1)\esp\left[\left(\tau_{i}-\tau_{i-1}\right)^2\wedge M^2 \right]\label{eq: MajV}.
\end{align}
\end{lemme}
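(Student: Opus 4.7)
The plan is to build an enlarged filtration in which the horizontal process $(X_t)_t$ remains a Brownian motion while $\tau_i-\tau_{i-1}$ becomes a stopping time; then the Burkholder--Davis--Gundy (BDG) inequality applies directly with a bounded stopping time and both estimates follow from routine martingale computations with the correct dependence in $n$.

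The first step is to separate the sources of randomness in the coupling on $[\tau_{i-1},\tau_i]$ produced by Proposition \ref{Brique1}. On each geometric sub-interval $[\tau_{i-1}+t_k,\tau_{i-1}+t_{k+1}]$, the coupling is driven by two independent pieces of data: the horizontal Brownian motion $X$ on that sub-interval (which determines the Wishart-like matrix $R^{(k)}(T_k)$), and an auxiliary Brownian motion $W^{(k)}$ on $[0,1]$, independent of $X$, which drives the reflected coordinate used to couple the $i$-th line of $\zeta$. Hence $\sigma^{(k)}$ is measurable with respect to $\sigma(X|_{[\tau_{i-1}+t_k,\tau_{i-1}+t_{k+1}]},W^{(k)})$. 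I would then introduce
\begin{equation*}
\mathcal{G}_t := \mathcal{F}^X_t \vee \sigma\bigl(W^{(l)}\,:\,\tau_{i-1}+t_{l+1}\leq t\bigr),
\end{equation*}
so each coupling noise $W^{(l)}$ is appended to the filtration only at the endpoint of the sub-interval it drives. Two properties then hold: $X$ is a $\mathcal{G}$-Brownian motion because its future increments are independent both of $\mathcal{F}^X_t$ and of every $W^{(l)}$; and $\tau_i$ is a $\mathcal{G}$-stopping time because $\{\tau_i-\tau_{i-1}\leq t_{k+1}\}$ lies in $\mathcal{G}_{\tau_{i-1}+t_{k+1}}$. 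Using the strong Markov property at $\tau_{i-1}$, the shifted process $B_s:=X_{\tau_{i-1}+s}-X_{\tau_{i-1}}$ is a Brownian motion starting at $0$ in $\tilde{\mathcal{G}}_s:=\mathcal{G}_{\tau_{i-1}+s}$, and $\sigma_i:=\tau_i-\tau_{i-1}$ is a $\tilde{\mathcal{G}}$-stopping time.

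Let $\sigma_i^\star:=\sigma_i\wedge M$, still a stopping time and now bounded by $M$. On $\{\sigma_i<M\}$ the suprema over $[0,\sigma_i]$ and over $[0,\sigma_i^\star]$ agree, so it suffices to bound the latter. For \eqref{eq: MajY}, apply BDG coordinatewise to each $\tilde{\mathcal{G}}$-martingale $B^j$ with the bounded stopping time $\sigma_i^\star$ to obtain $\esp[\sup_{s\leq\sigma_i^\star}(B_s^j)^4]\leq C\esp[(\sigma_i^\star)^2]$; summing using $\|B_s\|^4\leq n\sum_j (B_s^j)^4$ produces the factor $n^2$. For \eqref{eq: MajV}, observe that $V_i^{j,l}(\tau_{i-1}+u)=\tfrac12\int_0^u B_v^j\,dB_v^l-B_v^l\,dB_v^j$ is the Lévy area of $B$ between components $j$ and $l$, hence a $\tilde{\mathcal{G}}$-martingale with quadratic variation $\tfrac14\int_0^u\bigl((B_v^j)^2+(B_v^l)^2\bigr)\,dv$. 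Applying BDG, bounding the time integral by $\sigma_i^\star\cdot\sup_{v\leq\sigma_i^\star}((B_v^j)^2+(B_v^l)^2)$, and combining Cauchy--Schwarz with the BDG estimate from the previous step applied to $B^j$ and $B^l$, yields $\esp[\sup_{u\leq\sigma_i^\star}(V_i^{j,l})^2]\leq C\esp[(\sigma_i^\star)^2]$; summing over the $n(n-1)/2$ pairs $(j,l)$ produces the factor $n(n-1)$.

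The main obstacle is the filtration construction: because the coupling is non co-adapted, at first glance $\{\tau_i-\tau_{i-1}\leq t\}$ depends on future increments of $X$ through the Wishart matrices. The observation that must be made precise is that on each sub-interval the ``future'' information can be split into $X$ restricted to that already completed sub-interval (available at its right endpoint) and an independent auxiliary Brownian motion $W^{(k)}$. Once this is checked the Brownian character of $X$ is preserved by the enlargement, $\tau_i$ becomes a genuine stopping time, and both inequalities reduce to standard BDG estimates.
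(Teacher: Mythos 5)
Your proposal is correct, but it takes a genuinely different route from the paper. You build a single enlarged filtration $\mathcal{G}_t=\mathcal{F}_t^X\vee\sigma\bigl(W^{(l)}:\tau_{i-1}+t_{l+1}\leq t\bigr)$, claim that the full horizontal process $X$ --- including the coupled coordinate $X^1$ --- is a $\mathcal{G}$-Brownian motion with $\tau_i-\tau_{i-1}$ a $\mathcal{G}$-stopping time, and then apply the B-D-G inequality uniformly to every coordinate and every Lévy-area pair. The paper deliberately avoids ever asserting that $X^1$ is a martingale in an enlarged filtration: its filtrations $\mathcal{F}^{\star\star}$ and $\mathcal{F}^{\star}$ track only $(X^j)_{j\geq 2}$ together with the concatenated auxiliary Brownian motion $C$; $X^1$ is handled via the decomposition $X^1=X_0^1+\chi_1+\chi_2$, with $\chi_1$ bounded pointwise by $\sup\|C\|^2$ and $\chi_2$ treated through its independence of $\tau_1$; the mixed integrals $\int (X^1-X_0^1)\,dX^l$ are controlled by making $X^1$ $\mathcal{F}^{\star}_0$-measurable, and $\int (X^j-X_0^j)\,dX^1$ is recovered by integration by parts. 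Your approach is more economical --- one filtration and one B-D-G step cover every $(j,l)$ pair, and the factors $n^2$ and $n(n-1)$ drop out of the same pointwise bounds $\|Y\|_2^4\leq n\sum_j(B^j)^4$ and $\|V\|_2^2\leq\sum_{j<l}(V^{j,l})^2$. What you should spell out more carefully is precisely the step the paper steers around: the claim that $X$ is a $\mathcal{G}$-Brownian motion. As phrased, ``its future increments are independent $\ldots$ of every $W^{(l)}$'' is not literally true, since increments of $X^1$ inside $[t_k,t_{k+1}]$ depend on the current $W^{(k)}$. The correct statement is that the Karhunen--Loève vector $\xi^{(k)}$ is $\mathcal{N}(0,I)$ and independent of $\mathcal{G}_{\tau_{i-1}+t_k}$ and of $(X^j)_{j\geq 2}$ on the whole sub-interval; this follows from the rotational invariance of $W_1^{(k)}$, its independence of the random basis $f^{(k)}$, and the fact that $M_{t_k}>0$ before coupling so that $f^{(k)}$ is a function of $R^{(k)}$ alone. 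Granting that, the Markov property of the Brownian path $X^1$ on each sub-interval gives the $\mathcal{G}$-Brownian property, the stopping-time claim follows because $\{\sigma^{(l)}<1\}\in\mathcal{G}_{\tau_{i-1}+t_{l+1}}$, and the rest of your argument is routine.
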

The third lemma gives an estimate for the coupling time after time $\tau_0$:
\begin{lemme}\label{lemme Esp temps de couplage}
Using the previous notations,
for any $M\geq \frac{\beta_n}{2}\|x-\tx\|_2^2$:
\begin{align*}
    \esp\left[(\tau-\tau_0)^2\wedge M^2\right]&\leq 
    \frac{4}{3}M^{\frac{3}{2}}2\sqrt{2\beta_n}(n-1)||x-\tx||_2+2M\beta_n \|\zeta\|_2.
    \end{align*}
\end{lemme}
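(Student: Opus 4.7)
The plan is to bound the truncated second moment via the layer-cake identity
\[
\esp\bigl[(\tau-\tau_0)^2\wedge M^2\bigr] \;=\; 2\int_0^M y\,\pr(\tau-\tau_0>y)\,dy,
\]
and to control $\pr(\tau-\tau_0>y)$ using the same-fibre coupling already constructed. Since $X_{\tau_0}=\tX_{\tau_0}$, the strong Markov property applied to the coupled process restarted at time $\tau_0$, together with Remark~\ref{Rem: tGd par rapport à Z}, yields
\[
\pr(\tau-\tau_0>y\mid\mathcal{F}_{\tau_0})\;\leq\;\frac{\beta_n\|\zeta_{\tau_0}\|_2}{y}\wedge 1.
\]
Multiplying by $y$ and using the elementary identity $y(\tfrac{a}{y}\wedge 1)=a\wedge y$ then gives $y\,\pr(\tau-\tau_0>y)\leq\esp[\beta_n\|\zeta_{\tau_0}\|_2\wedge y]$.

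The heart of the argument is the pointwise expectation bound
\[
(\star)\qquad\esp\bigl[\beta_n\|\zeta_{\tau_0}\|_2\wedge y\bigr]\;\leq\;\beta_n\|\zeta\|_2\;+\;2\sqrt{2\beta_n}\,(n-1)\|x-\tx\|_2\,\sqrt{y},\qquad y>0.
\]
Setting $y_0:=\tfrac{\beta_n}{2}\|x-\tx\|_2^2$, for $y\geq y_0$ I would apply Lemma~\ref{lemme: aires multiples2} with $m=y/\beta_n$ (the hypothesis $m\geq\tfrac12\|x-\tx\|_2^2$ is then satisfied) and multiply the resulting inequality by $y$, noting $y/m=\beta_n$. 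For $y<y_0$ the left side of $(\star)$ is trivially bounded by $y$, and a short arithmetic check shows $2\sqrt{2\beta_n}(n-1)\|x-\tx\|_2\sqrt{y}\geq y$ as long as $y\leq 8\beta_n(n-1)^2\|x-\tx\|_2^2$, a range containing $[0,y_0]$ for every $n\geq 2$; hence $(\star)$ holds uniformly in $y$.

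The conclusion is then a direct computation: integrating $(\star)$ against $2\,dy$ on $[0,M]$ produces
\[
2M\beta_n\|\zeta\|_2\;+\;4\sqrt{2\beta_n}\,(n-1)\|x-\tx\|_2\cdot\tfrac{2}{3}M^{3/2},
\]
which is precisely the stated inequality. The hypothesis $M\geq \tfrac{\beta_n}{2}\|x-\tx\|_2^2$ is used exactly to guarantee that Lemma~\ref{lemme: aires multiples2} applies on a substantial portion of $[0,M]$. I expect the main subtle point to be the uniform validity of $(\star)$: in the small-$y$ regime, where the underlying Lemma~\ref{lemme: aires multiples2} does not directly apply, one must exploit the fact that the right-hand side of $(\star)$ itself dominates $y$, so that the trivial bound can be replaced by the same formula without losing the sharp $M^{3/2}$ scaling.
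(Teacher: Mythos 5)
Your proof is correct and takes essentially the same approach as the paper: both arguments feed the same-fibre coupling rate (Theorem~\ref{Couplage par lignes} plus Lemma~\ref{lemme: aires multiples2}) into a layer-cake integral for $\esp[(\tau-\tau_0)^2\wedge M^2]$, using the substitution $t=y^2$ so the two integrals coincide. The only cosmetic difference is where the small-$y$ regime is handled: the paper splits the integral at $m_0^2=\big(\tfrac{\beta_n}{2}\|x-\tx\|_2^2\big)^2$, bounds the low part trivially by $m_0^2$, and then verifies at the end that $m_0^2-\tfrac{4}{3}m_0^{3/2}\,2\sqrt{2\beta_n}(n-1)\|x-\tx\|_2\le0$ so the extra term can be dropped; you instead establish the pointwise bound $(\star)$ uniformly in $y$ by checking directly that the trivial bound $y$ is dominated by the $\sqrt{y}$ term when $y\le y_0$, and then integrate once over $[0,M]$. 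The two cleanup steps are exactly the same arithmetic fact, just invoked before versus after the integration.
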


Please note that Lemmas \ref{lemme: BDGNonCoAdapte} and \ref{lemme Esp temps de couplage} will be proven in Subsection \ref{subsec: BDGNonCoAdapte}. For now, we pursue the proof of Proposition \ref{prop: exitTimeCube}.

From Lemma \ref{Lemme CranstonSortieRef}, we obtain:
\begin{equation}\label{eq: A_0 1}
    \pr(A_0)\leq \pr\big(\tau_0>S\left(\alpha(0)\right)\big)\leq \frac{n\sqrt{n}\|x-\tx\|_2}{2\alpha(0)}.
\end{equation}
Using the Markov Inequality, we also have:
\begin{align*}
    \pr(A_0^c\cap \Gamma_0)&\leq\pr\left(\sup\limits_{0\leq t\leq \tau_0\wedge S\left(\alpha(0)\right)}\|U_t\|_2>\gamma(0)^2\right)\leq \frac{1}{\gamma(0)^4}\esp\left[\sup\limits_{0\leq t\leq \tau_0\wedge S\left(\alpha(0)\right)}\|U_t\|_2^2\right]\\
    &\leq \frac{1}{\gamma(0)^4}\left(\esp\left[\sup\limits_{0\leq t\leq \tau_0\wedge S\left(\alpha(0)\right)}\|U_t-U_0\|_2^2\right]+\frac{\left\|\zeta\right\|^2_2}{4}\right).
\end{align*}
We remind that, for $1\leq j\neq l\leq n$, $dU_t^{j,l}=\frac{1}{2}\left((X_t-\hx)^jdX_t^l-(X_t-\hx)^ldX_t^j\right)$. As the reflection coupling is a co-adapted coupling, $\tau_0\wedge S(\alpha(0))$ is a stopping time for the common filtration of $(X_t)_t$ and $(\tX_t)_t$. Using the B-D-G Inequality, we obtain:
\begin{align*}
    \sum\limits_{1\leq j<l\leq n}\esp\bigg[\sup\limits_{0\leq t\leq \tau_0\wedge S(\alpha(0))}&\left.\left(U_t^{j,l}-U_0^{j,l}\right)^2\right]\\
    &\leq \sum\limits_{1\leq j<l\leq n}\frac{c_2}{4} \esp\left[\int_0^{\tau_0\wedge S(\alpha(0))}|X_t^j-\hx^j|^2+|X_t^l-\hx^l|^2dt\right]\\
    &\leq \frac{c_2(n-1)}{4} \esp\left[\int_0^{\tau_0\wedge S(\alpha(0))}\|X_t-\hx\|_2^2dt\right]\\
    &\leq \frac{c_2(n-1)}{4}\alpha(0)^2 \esp\big[\tau_0\wedge S(\alpha(0))\big]\\
    &\leq \frac{c_2(n-1)}{8}\alpha(0)^3\|x-\tx\|_2.
\end{align*}
Finally we have:
\begin{equation}\label{eq: A_0 2}
    \pr\left(A_0^c\cap\Gamma_0\right)\leq \frac{c_2n}{8}\frac{\alpha(0)^3}{\gamma(0)^4}\|x-\tx\|_2+\frac{\|\zeta\|_2^2}{4\gamma(0)^4}.
\end{equation}
We now look at $\pr\left(\bigcup\limits_{i=1}^{n-1}(A_i\cup \Gamma_i)\right)$. We have:
\begin{align*}
    \pr\left(\bigcup\limits_{i=1}^{n-1}(A_i\cup \Gamma_i)\right)&\leq \sum\limits_{i=1}^{n-1}\pr\left(\tau_i-\tau_{i-1}>\frac{\beta_n}{n-1}\right)\notag\\
    &+\sum\limits_{i=1}^{n-1}\pr\left({\tau_i-\tau_{i-1}\leq\frac{\beta_n}{n-1}},\sup\limits_{\tau_{i-1}\leq t\leq \tau_i}\|Y_i(t)\|_2>\alpha(i)\right)\notag\\
    &+\sum\limits_{i=1}^{n-1}\pr\left({\tau_i-\tau_{i-1}\leq\frac{\beta_n}{n-1}},\sup\limits_{\tau_{i-1}\leq t\leq \tau_i}\|V_i(t)\|_2>\alpha(i)\right).
\end{align*} 

As in Subsection \ref{subsec: memefibreGn}, for all $1\leq i\leq n-1$, we denote $v_{\tau_0}^i=(\zeta_{\tau_0}^{i,j})_{j\in\{1,\hdots,n\}\setminus\{ i\}}\in\mathbb{R}^{n-1}$. We also recall that $\beta_n=(n-1)^{\frac{3}{2}}b_n$.
Iterating $(n-1)$-times Proposition \ref{Brique1} together with Remark \ref{Rem: Brique}, we have:
\begin{equation*}\sum\limits_{i=1}^{n-1}\pr\left(\tau_i-\tau_{i-1}>\frac{\beta_n}{n-1}\right)\leq\sum\limits_{i=1}^{n-1}\esp\left[\left(\|v_{\tau_{i-1}^i}\|_2\frac{(n-1)b_n}{\beta_n}\right)\wedge 1\right]\leq \esp\left[\|\zeta_{\tau_0}\|_2\wedge (n-1)\right].
\end{equation*}

By hypothesis, we have $\frac{1}{2}\|x-\tx\|_2^2\leq n-1$. Thus, using Lemma \ref{lemme: aires multiples2}, we get:
\begin{equation}\label{eq: estimateTpsSortiePont}
\sum\limits_{i=1}^{n-1}\pr\left(\tau_i-\tau_{i-1}>\frac{\beta_n}{n-1}\right)\leq\|\zeta\|_2+2\sqrt{2}(n-1)^{\frac{3}{2}}||x-\tx||_2.
\end{equation}

Using Markov's Inequality and Lemma \ref{lemme: BDGNonCoAdapte}, we obtain:
\begin{align*}
    \pr\left({\tau_i-\tau_{i-1}\leq\frac{\beta_n}{n-1}},\sup\limits_{\tau_{i-1}\leq t\leq \tau_i}\|Y_i(t)\|_2>\alpha(i)\right)&\leq\frac{Cn^2}{\alpha(i)^4}\esp\left[\left(\tau_i-\tau_{i-1}\right)^2\wedge\left(\frac{\beta_n}{n-1}\right)^2\right] \\
    \pr\left({\tau_i-\tau_{i-1}\leq\frac{\beta_n}{n-1}},\sup\limits_{\tau_{i-1}\leq t\leq \tau_i}\|V_i(t)\|_2>\gamma(i)^2\right)&\leq\frac{Cn(n-1)}{\gamma(i)^4}\esp\left[\left(\tau_i-\tau_{i-1}\right)^2\wedge\left(\frac{\beta_n}{n-1}\right)^2\right].
\end{align*}

Thus:
\begin{align}\label{eq: Pendant le pont}
\pr\left(\bigcup\limits_{i=1}^{n-1}(A_i\cup \Gamma_i)\right)&\leq \|\zeta\|_2+2\sqrt{2}(n-1)^{ \frac{3}{2}}||x-\tx||_2\notag\\
&+C\left(\frac{n^2}{\min\limits_{1\leq i\leq n-1}(\alpha(i))^4}+\frac{n(n-1)}{\min\limits_{1\leq i\leq n-1}(\gamma(i))^4}\right)\esp\left[\left(\tau-\tau_{0}\right)^2\wedge \frac{\beta_n^2}{n-1}\right].
\end{align}

Still using the fact that $\|x-\tx\|_2^2\leq \frac{2}{\sqrt{n-1}}$ we can also use Lemma \ref{lemme Esp temps de couplage} with $M=\frac{\beta_n}{\sqrt{n-1}}$ to get:
\begin{equation}\label{eq: estimateTpsSortiePont2}
    \esp\left[\left(\tau-\tau_{0}\right)^2\wedge \frac{\beta_n^2}{n-1}\right]\leq 
    \frac{8\sqrt{2}}{3}\beta_n^{2}(n-1)^{\frac{1}{4}}||x-\tx||_2+\frac{2\beta_n^2}{\sqrt{n-1}} \|\zeta\|_2.
    \end{equation}

 To construct two sequences $(\alpha(i))_{0\leq i\leq n-1}$ and $(\gamma(i))_{0\leq i\leq n-1}$ satisfying (\ref{eq: ConditionsSuites}), we can choose $\alpha(0)=\frac{\alpha}{2}$, $\gamma(0)=\frac{\gamma}{\sqrt{2}}$ and, for $1\leq i\leq n-1$, $\alpha(i)=\frac{\alpha}{4(n-1)}$ and $\gamma(i)=\frac{\gamma}{2\sqrt{n-1}}$. Indeed, we have: \begin{equation*}
\sum\limits_{i=0}^{n-1}\alpha(i)<\frac{\alpha}{2}+\sum\limits_{i=1}^{n-1}\frac{\alpha}{4(n-1)}<\alpha
\end{equation*} 
and 
\begin{align*}
\sum\limits_{i=0}^{n-1}\gamma(i)^2&+\frac{1}{2}\sum\limits_{0\leq i<j\leq n-1 }\alpha(i)\alpha(j)\\
&\leq \frac{\gamma^2}{2}+\sum\limits_{i=1}^{n-1}\frac{\gamma^2}{4(n-1)}+\frac{1}{2}\left(\frac{\alpha}{2}\sum\limits_{j=1}^{n-1}\frac{\alpha}{4(n-1)}+\sum\limits_{1\leq i<j\leq n-1 }\frac{\alpha^2}{16(n-1)^2}\right)\\
&<\frac{\gamma^2}{2}+\frac{\gamma^2}{4}+\frac{\alpha^2}{16}+\frac{\alpha^2}{32}<\gamma^2.
\end{align*} 
The last inequality follows from the hypothesis $\alpha\leq \gamma$.
We thus obtain inequalities  (\ref{eq: A_0 2}), (\ref{eq: Pendant le pont}) and (\ref{eq: estimateTpsSortiePont2}) which ends the proof of Proposition \ref{prop: exitTimeCube}.
\end{proof}
\subsection{Proofs of the Lemmas}\label{subsec: BDGNonCoAdapte}
In this subsection we give the announced proofs for Lemma \ref{lemme: BDGNonCoAdapte} and Lemma \ref{lemme Esp temps de couplage}.

We begin with the proof of Lemma \ref{lemme Esp temps de couplage}.
\begin{proof}[Proof of Lemma \ref{lemme Esp temps de couplage}]
From Theorem \ref{Couplage par lignes} and Lemma \ref{lemme: aires multiples2}, for $\sqrt{t}\geq m_0:=\frac{\beta_n}{2}\|x-\tx\|_2^2$ we get:
\begin{align*}
    \pr\left(\tau-\tau_0>\sqrt{t}\right)&\leq \esp\left[\left(\|\zeta_{\tau_0}\|_2\frac{\beta_n}{\sqrt{t}}\right)\wedge 1\right]\\
    &\leq t^{-\frac{1}{2}} \beta_n\|{\zeta}\|_2+t^{-\frac{1}{4}}2\sqrt{2\beta_n}(n-1)||x-\tx||_2.
\end{align*}
Then:
\begin{align*}
    \esp&\left[(\tau-\tau_0)^2\wedge M^2\right]=\int_0^{m_0^2} \pr((\tau-\tau_0)^2>t)dt+\int_{m_0^2}^{M^2} \pr((\tau-\tau_0)^2>t)dt\\
    &\leq{m_0^2} +\int_{m_0^2}^{M^2} \pr\left(\tau-\tau_0>\sqrt{t}\right)dt\\
    &\leq{m_0^2}+ \int_{m_0^2}^{M^2}t^{-\frac{1}{4}}dt\times2\sqrt{2\beta_n}(n-1)||x-\tx||_2+\int_{0}^{M^2}t^{-\frac{1}{2}}dt\times\beta_n\|{\zeta}\|_2\\
    &={m_0^2}+\frac{4}{3}\left(M^{\frac{3}{2}}-m_0^{\frac{3}{2}}\right)2\sqrt{2\beta_n}(n-1)||x-\tx||_2+2M\beta_n\|{\zeta}\|_2.
\end{align*}
As ${m_0^2}-\frac{4}{3}m_0^{\frac{3}{2}}2\sqrt{2\beta_n}(n-1)||x-\tx||_2=m_0^{\frac{3}{2}}\sqrt{\frac{\beta_n}{2}}\|x-\tx\|_2\left(1-\frac{16}{3}(n-1)\right)<0$ for all $n\geq 2$, we obtain the expected result.
\end{proof}

We now deal with the proof of Lemma \ref{lemme: BDGNonCoAdapte}.
\begin{proof}
[Proof of Lemma \ref{lemme: BDGNonCoAdapte}:]
 By construction of the coupling, it is enough to prove these results for $i=1$. To simplify the notations we also suppose that the Brownian motions start from the same fiber, i.e., $\tau_0=0$. 
For all $t\in[0,\tau_1]$, we have:
\begin{equation*}
    \|Y_1(t)\|_2^4\leq n\sum\limits_{j=1}^n|X^j(t)-X^j(0)|^4
    \text{ and } \|V_1(t)\|_2^2\leq \frac{1}{2}\sum\limits_{1\leq j\neq l\leq n}\left(\int_0^t(X_s^j-X_0^j)dX^l_s\right)^2.
\end{equation*}
Thus, the lemma will be proven if we can show that there exists a constant $\tilde{C}$ (not depending on the starting points of the processes or the rank $n$), such that, for any $1\leq j\neq l\leq n$:
\begin{equation}\label{eq: generalisationLemme}
     \esp\left[\mathbb{1}_{\{\tau_1<M\}}\sup\limits_{0\leq t \leq\tau_1}\left(X^j(t)-X^j(0)\right)^4\right]\leq \tilde{C}\esp\left[\left(\tau_1\wedge M\right)^2 \right]
     \end{equation}
     and
   \begin{equation}\label{eq: generalisationLemme2}
   \esp\left[\mathbb{1}_{\{\tau_1<M\}}\sup\limits_{0\leq t \leq\tau_1}\left(\int_0^t \left(X^j_s-X^j_0\right) dX^l(s)\right)^2\right]\leq \tilde{C}\esp\left[\left(\tau_1\wedge M\right)^2 \right].
\end{equation}
These two results can be seen as a generalisation of \emph{Lemma 4.1} from \cite{banerjee2017coupling} to higher dimensions. Although the method is quite similar, it needs some adjustments due to the change of the dimension. {We give here the details of the proof for the convenience of the reader and will point out the major elements needing extra care.}

As in the case of the Heisenberg group, we want to use the B-D-G Inequality. The main difficulty is due to the fact that the coupling is not co-adapted. Thus, $\tau_1$ is not a stopping time for the usual filtration induced by $(X_s)_{s}$ and we need to consider an enlarged filtration. The second difficulty comes from the fact that the considered processes (or most of them) have to be martingales for this new filtration.

Using the same notations as in the proof of Proposition \ref{Brique1}, we define:
     \begin{align}
         \chi_2(t)&:=\sum\limits_{k\geq 0}\mathbb{1}_{\{t_k\leq t\}}\left(\sum\limits_{m\geq n}\xi^{(k)}_m\frac{\sqrt{2T_k}}{m\pi}\sin\left(\frac{m\pi \left((t-t_k)\wedge T_k\right)}{T_k}\right)+\xi_0^{(k)}\frac{\left((t-t_k)\wedge T_k\right)}{\sqrt{T_k}}\right);\\
        \text{ and } \chi_1(t)&:=\sum\limits_{k\geq 0}\mathbb{1}_{\{t_k\leq t\leq t_{k+1}\}}\sum\limits_{m=1}^{n-1}\xi^{(k)}_m\frac{\sqrt{2T_k}}{m\pi}\sin\left(\frac{m\pi (t-t_k)}{T_k}\right)
     \end{align}
     such that $X_t^1=X_0^1+\chi_1(t)+\chi_2(t)$ for all $t\in[0,\tau_1]$.
    
     We {remind the reader} that, for all $k\geq 0$, $\left(\xi_m^{(k)}\right)_{1\leq m\leq n-1}=\sum\limits_{l=1}^{n-1} W_1^{l,(k)} f_l^{(k)}$ with $W^{(k)}$ a $\mathbb{R}^{n-1}$-Brownian motion independent of the basis $\left(f_l^{(k)}\right)_{1\leq l\leq n-1}$. In particular, $\left(\xi_m^{(k)}\right)_{1\leq m\leq n-1}$ has the distribution $\mathcal{N}(0,I_{n-1})$ because of this independence. With the same argument, we can define a Brownian motion $(C_t)_{0\leq t\leq \tau_1}$ starting from $0$ by taking $C_t:=\sum\limits_{k\geq 0}\sqrt{T_k}\sum\limits_{l=1}^{n-1}\mathbb{1}_{\{t\geq t_k\}} W_{\frac{(t-t_k)\wedge T_k}{T_k}}^{l,(k)} f_l^{(k)}$. In particular $(C_t)_{0\leq t\leq \tau_1}$ is independent of all the bases $\left(f_l^{(k)}\right)_{1\leq l\leq n-1}$. We also have $\left(\xi_m^{(k)}\right)_{1\leq m\leq n-1}=\frac{C_{t_{k+1}}-C_{t_k}}{\sqrt{T_k}}$. For $t\geq 0$, we define two filtrations:
     \begin{align}\label{eq: filtration1}
         \mathcal{F}_t^{\star}:=\sigma\Big(\{X_s^j,s\leq t\wedge\tau_1,2\leq j\leq n\}&\cup\{C_s,0\leq s\leq \tau_1\}\notag\\
         &\cup\{\xi_0^{(k)},k\geq 0\}\cup\{\xi_m^{(k)}, m\geq n, k\geq 0\}\Big)
     \end{align} and
       \begin{align}\label{eq: filtration2}
         \mathcal{F}_t^{\star \star}:=\sigma\Big(\{X_s^j,s\leq t\wedge\tau_1,2\leq j\leq n\}&\cup\{C_s,0\leq s\leq t\wedge\tau_1\}\notag\\
         &\cup\{\xi_0^{(k)},k\geq 0\}\cup\{\xi_m^{(k)}, m\geq n, k\geq 0\}\Big).
     \end{align}
         Note here, that, contrary to the case of the Heisenberg group, our coupling strategy needs the introduction, for all $k\geq 0$, of the basis $\left(f_l^{(k)}\right)_{1\leq l\leq n-1}$ which depends on $(X_s^j)_{t_k\leq t \leq t_{k+1}}$ for all $2\leq j\leq n$. We then have to take extra care to define the filtration $(\mathcal{F}^{\star}_t)_t$ with $(C_{t\wedge\tau_1})_{t}$ and not with a direct concatenation of  $(W^{(k)}_s)_{0\leq s\leq 1}$. Then $(X_{t\wedge\tau_1}^1-X_0^1)_{t}$ is $\mathcal{F}_0^{\star}$-measurable which will be important in the future to deal with the quantity \\$\esp\left[\mathbb{1}_{\{\tau_1\leq M\}}\sup\limits_{0\leq t \leq\tau_1}\left(\int_0^t \left(X^1_s-X^1_0\right) dX^l_s\right)^2\right]$ with $2\leq l\leq n$ (upcoming inequality (\ref{eq: BDGMartX^1})).
         
    {Since $\tau_1$ only takes its values in $\{t_k,\ k\geq 0\}$, with the same arguments as in the proof in~\cite{banerjee2017coupling},}
    $\tau_1$ is a stopping time for the filtrations $(\mathcal{F}_t^{\star \star})_{t}$ and $(\mathcal{F}_t^{\star})_{t}$. 
    
    For $2\leq j\leq n$, $(X_{t\wedge\tau_1}^j)_{t}$ is clearly a (stopped) Brownian motion adapted to the two filtrations $(\mathcal{F}_t^{\star \star})_t$ and $(\mathcal{F}_t^{\star})_t$. Thus, using the B-D-G Inequality, we obtain (\ref{eq: generalisationLemme}):
    \begin{equation}\label{eq: BDGXj}
         \esp\left[\sup\limits_{0\leq t \leq\tau_1\wedge M}\left(X^j(t)-X^j(0)\right)^4\right]\leq c_4\esp\left[\left(\tau_1\wedge M\right)^2 \right],
    \end{equation}
    with $c_4$ the universal constant for the B-D-G Inequality.
    With the same arguments, {using the B-D-G Inequality twice}, we obtain (\ref{eq: generalisationLemme2}) for $2\leq j\neq l\leq n$:
    \begin{align}\label{eq: BDGMart}
   \esp\left[\sup\limits_{0\leq t \leq\tau_1\wedge M}\left(\int_0^t \left(X^j_s-X^j_0\right) dX^l_s\right)^2\right]
   &\leq c_2 \esp\left[ \int_0^{\tau_1\wedge M} \left(X^j_s-X^j_0\right)^2 ds\right]\notag \\
   &\leq c_2 \esp\left[\left(\tau_1\wedge M \right) \sup\limits_{0\leq t \leq\tau_1\wedge M}\left(X^j_s-X^j_0\right)^2  \right]\notag \\
   &\leq c_2\esp\left[\left(\tau_1\wedge M\right)^2 \right]^{\frac{1}{2}}\esp\left[\sup\limits_{0\leq t \leq\tau_1\wedge M}\left(X^j_s-X^j_0\right)^4  \right]^{\frac{1}{2}}\notag\\
   &\leq c_2\sqrt{c_4}\esp\left[\left(\tau_1\wedge M\right)^2 \right].
\end{align}
Again, $c_2$ and $c_4$ are the two universal constants intervening in the B-D-G Inequality. Note that these universal constant can be estimated (see~\cite{BDGestimate}).

We now look for (\ref{eq: generalisationLemme}) for $j=1$. Let $k_M\geq 0$ such that $M\in[t_{k_M},t_{k_M+1}[$. Let $t_{k+1}\leq \tau_1\wedge t_{k_M}$ and $t\in[t_k,t_{k+1}]$. We denote by $(e_1,\hdots,e_{n-1})$ the {canonical} basis in $\mathbb{R}^{n-1}$. We get:
\begin{align*}
    |\chi_1(t)|^2&= \left|\sum\limits_{m=1}^{n-1}\langle C_{t_{k+1}}-C_{t_k},e_m\rangle\frac{\sqrt{2}}{m\pi}\sin\left(\frac{m\pi (t-t_k)}{T_k}\right)\right|^2\leq \left\| C_{t_{k+1}}-C_{t_k}\right\|_2^2 \frac{2}{\pi^2}\sum\limits_{m=1}^{n-1}\frac{1}{m^2}\notag\\
    &
    \leq\frac{4}{3}\sup\limits_{t_k\leq t\leq t_{k+1}}\left\| C_{t}\right\|_2^2\leq \frac{4}{3}\sup\limits_{0\leq t\leq \tau_1\wedge M}\left\| C_{t}\right\|_2^2.
\end{align*}
As $\tau_1$ takes its values in $\{t_k,k\geq 0\}$, there exists $0\leq k\leq k_M$ such that $\tau_1\wedge t_{k_M}=t_k$. Thus we have:
\begin{equation*}\sup\limits_{0\leq t\leq\tau_1\wedge t_{k_M}}|\chi_1(t)|^2\leq \frac{4}{3}\sup\limits_{0\leq t\leq \tau_1\wedge M}\left\| C_{t}\right\|_2^2.
\end{equation*}
Since $(C_{t\wedge\tau_1})_{t}$ is a (stopped) Brownian motion adapted to the filtration $(\mathcal{F}_t^{\star \star})_{t}$, we can use the B-D-G Inequality to obtain: 
\begin{equation}\label{eq: BDG chi_1}
\esp\left[\sup\limits_{0\leq t\leq\tau_1\wedge t_{k_M}}|\chi_1(t)|^4\right]\leq \frac{16}{9}\left[\sup\limits_{0\leq t\leq \tau_1\wedge M}\left\| C_{t}\right\|^4\right]\leq \frac{16c_4}{9}\esp\left[\left(\tau_1\wedge M\right)^2\right].
\end{equation}

We now deal with $\esp\left[\sup\limits_{0\leq t\leq\tau_1\wedge t_{k_M}}|\chi_2(t)|^4\right]$. As in \cite{banerjee2017coupling}, we remark that $\chi_2$ is independent of $\tau_1$. Conditioning by $\tau_1$, we get: 
\begin{align}\label{eq: BDG chi_2}
\esp\left[\sup\limits_{0\leq t\leq\tau_1\wedge t_{k_M}}|\chi_2(t)|^4\right]&\leq 8\esp\left[\sup\limits_{0\leq t\leq \tau_1\wedge t_{k_M}}|\chi_1(t)|^4\right]+8\esp\left[\esp\left[\left|\sup\limits_{0\leq t\leq \tau_1\wedge t_{k_M}}|X^1_t-X^1_0|^4\right|\tau_1\right]\right]\notag\\
&\leq 8\esp\left[\sup\limits_{0\leq t\leq\tau_1\wedge t_{k_M}}|\chi_1(t)|^4\right]+8c_4\esp[(\tau_1\wedge t_{k_M})^2]\notag\\
&\leq 8c_4\frac{25}{9}\esp\left[\left(\tau_1\wedge M\right)^2\right].
\end{align}

From (\ref{eq: BDG chi_1}) and (\ref{eq: BDG chi_2}), we obtain (\ref{eq: generalisationLemme}) for $j=1$:
\begin{align}\label{eq: sumChi}
    \esp\left[\mathbb{1}_{\{\tau_1\leq M\}}\times\sup\limits_{0\leq t\leq\tau_1}|X^1_t-X^1_0|^4\right]&=\esp\left[\mathbb{1}_{\{\tau_1\leq t_{k_M}\}}\times\sup\limits_{0\leq t\leq\tau_1}|X^1_t-X^1_0|^4\right]\notag\\
    &\leq \esp\left[\sup\limits_{0\leq t\leq\tau_1\wedge t_{k_M}}|\chi_1(t)+\chi_2(t)|^4\right]\leq 3\times 8^2c_4 \esp\left[\left(\tau_1\wedge M\right)^2\right].
\end{align}

We can now focus on (\ref{eq: generalisationLemme2}) for $j=1$ and $2\leq l\leq n$. For the filtration $(\mathcal{F}^{\star}_t)_{t}$, $(X_{t\wedge\tau_1}^l)_{t}$ is an adapted Brownian motion. Moreover $\left(X_{t\wedge\tau_1}^1-X_0^1\right)_{t}$ is $\mathcal{F}^{\star}_0$-measurable. Then, using the B-D-G inequality and (\ref{eq: sumChi}):
\begin{align}\label{eq: BDGMartX^1}
    \esp&\left[\mathbb{1}_{\{\tau_1\leq M\}}\sup\limits_{0\leq t \leq\tau_1}\left(\int_0^t \left(X^1_s-X^1_0\right) dX^l_s\right)^2\right]\leq\esp\left[\sup\limits_{0\leq t \leq\tau_1\wedge t_{k_M}}\left(\int_0^t \left(X^1_s-X^1_0\right) dX^l_s\right)^2\right]\notag\\
    &\leq c_2\esp\left[\left(\tau_1\wedge t_{k_M}\right)^2 \right]^{\frac{1}{2}}\esp\left[\sup\limits_{0\leq t \leq\tau_1\wedge t_{k_M}}\left(X^1_t-X^1_0\right)^4  \right]^{\frac{1}{2}}\notag\\
   &\leq 8c_2\sqrt{3c_4}\esp\left[\left(\tau_1\wedge M\right)^2 \right].
\end{align}
Finally, to obtain (\ref{eq: generalisationLemme2}) with $j>2$ and $l=1$, we can use the Itô formula to get \begin{equation*}\int_0^t \left(X_s^j-X_0^j\right) dX^1(s)=(X_t^1-X_0^1)(X_t^j-X_0^j)-\int_0^t \left(X^1_s-X^1_0\right) dX^j(s).\end{equation*}
Using the previous results, we then have:
\begin{equation*}
    \esp\left[\mathbb{1}_{\{\tau_1\leq M\}}\sup\limits_{0\leq t \leq\tau_1}\left(\int_0^t \left(X^j_s-X^j_0\right) dX^1(s)\right)^2\right]\leq 16\sqrt{3c_4}( c_2+\sqrt{c_4})\esp\left[\left(\tau_1\wedge M\right)^2 \right].
\end{equation*}
\end{proof}
\section{Gradient estimates}\label{Sec: gradientEstimates}
In this section, we present several gradient estimates. The first ones gives estimates for the heat semi-group and can be directly deduced from the coupling rate obtained in Section \ref{Section:Brownian BridgeGn}. The second ones are about harmonic functions and come from the comparison between the first coupling time and the first exit time from a domain dealt with in Section \ref{sec: coupling Vs Exit}.
\subsection{Norms of gradients}\label{subsec: NormsGardientGn}
We first begin with some preliminaries on the horizontal and vertical gradients on the homogeneous Carnot groups. Using Remark \ref{NB: isomorphisme Gn}, the following objects and properties will keep sense on $\Ge_n$ for $n\geq 2$.

 Let $\Ge=(\mathbb{R}^{n+m},\circ)$ be a homogeneous Carnot group with step $2$ and rank $n$. For any $f$ smooth enough, we define the horizontal gradient by \begin{equation*}\nabla_{\mathcal H} f:=\sum_{i=1}^n \bX_i(f) \bX_i.\end{equation*}
 Denoting by 
$\|\cdot  \|_{\mathcal H}$ the Euclidean norm on $\mathcal{H}$, for any $g\in \Ge$, we have:
 \begin{equation*}||\nabla_{\mathcal{H}}f(g)||_{\mathcal{H}}:=\sqrt{\sum\limits_{i=1}^n(\bar{X_i}f)^2(g)}.\end{equation*} 
 To obtain estimates of this norm, we use upper gradients. Let us first recall the definition of an upper gradient. We say that a function $u$ on $\Ge$ is an upper gradient of $f$ if, for every horizontal curve $\gamma:[0,T]\rightarrow \Ge$ parameterized with the arc-length, we have:
\begin{equation*}
|f(\gamma(0))-f(\gamma(t))|\leq \int_0^tu(\gamma(s))ds.
\end{equation*}
In particular, $||\nabla_{\mathcal{H}}\cdot||_{\mathcal{H}}$ is an upper-gradient.

As $\Ge$ has a left-invariant structure, it is a regular subRiemannian manifold as described in~\cite{HuangSublaplacian}, and, 
for all $u$ upper gradient of $f$:
\begin{equation}\label{eq: upper-gradientInequality}
||\nabla_{\mathcal{H}}f(g)||_{\mathcal{H}}\leq u(g) \text{ a.e. in }E_k.
\end{equation}
See~\cite{SobolevMetPoincarre,HuangSublaplacian} for some proofs and more details.
In particular, for $f$ Lipschitz, an upper gradient will be given by the gradient length associated to the metric space $(\Ge,d_{cc})$ (see~\cite{kuwada,SobolevMetPoincarre}):
\begin{equation}\label{gradient length}|\nabla f|(g):=\lim\limits_{r\downarrow 0}\sup\limits_{\substack{
		g\neq \tg\\
		d_{cc}(g,\tg)<r}
}\frac{\lvert f(g)-f(\tg)\rvert}{d_{cc(g,\tg)}}.
\end{equation}
Thus, using (\ref{eq: upper-gradientInequality}), we have:
\begin{equation}\label{eq:Uppergradient}
||\nabla_{\mathcal{H}}f(g)||_{\mathcal{H}}\leq |\nabla f|(g) \text{ a.e. in }\Ge.
\end{equation}
When $\Ge\approx \Ge_n$, we can consider the gradient length associated to the pseudo-metric $\bd$: 
\begin{equation*}|\nabla f|_{\bd}(g):=\lim\limits_{r\downarrow 0}\sup\limits_{\substack{
		g\neq \tg\\
		\bd(g,\tg)<r}
}\frac{\lvert f(g)-f(\tg)\rvert}{\bd(g,\tg)} \text{ for any function }f\text{ Lipschitz on }(\Ge,d_{cc}).
\end{equation*}
By equivalence between the pseudo-metric $\bd$ and the Carnot-Carathéodory metric $d_{cc}$, this notation makes sense. In particular, as $\bd(g,\tg)\leq \frac{1}{{m_1(n)}}
d_{cc}(g,\tg)$ (see \eqref{eq: equivSeminorme}), we have \begin{equation}\label{comparaisonLongueurGradients}|\nabla P_t f|_{\bd}(g)\geq {m_1(n)}
|\nabla P_t f|.
\end{equation}


We can also define the vertical gradient: \begin{equation*}\nabla_{\frak v} f= \sum_{k=1}^m \bar{Z}_{k} (f) \bar Z_{k}.\end{equation*}
Considering the Euclidean metric on each fiber $G_x:=\{(x,z)\in\Ge \ |\ z\in\mathbb{R}^m\}\approx\mathbb{R}^m$, as for the horizontal gradient, we can evaluate $\|\nabla_{\frak v} f(g)\|=\sqrt{\sum\limits_{k=1}^m \bZ_k(f)^2(g)}$. By using the gradient length associated to $G_x$ endowed with the Euclidean norm, for $g=(x,z)$ and $f$ Lipschitz, we have:
\begin{equation}\label{eq: VertUppergradient}
\|\nabla_{\frak v} f(x,z)\|\leq\lim\limits_{r\downarrow 0}\sup\limits_{\substack{
		z\neq \tz\\
		\|z-\tz\|_2<r}
}\frac{\lvert f(x,z)-f(x,\tz)\rvert}{\|z-\tz\|_2} a.e..
\end{equation}
Note that, when $\Ge\approx\Ge_n$, \eqref{eq: VertUppergradient} is still true for $z\in\mathfrak{so}(n)$ by definition of $\|\cdot\|_2$ (see \eqref{NB: isomorphisme Gn}).
\subsection{Estimates of the horizontal and vertical gradient for the heat semi-group}\label{Subsec: SuccesHorizontalVert}
	We first obtain an upper-bound (with an explicit estimate) for the horizontal gradient of the heat semi-group for any homogeneous step $2$ Carnot groups.
Let $g\in\Ge$ and $(\bbb_t)_t$ be a Brownian motion starting at $g$. We recall that, for all measurable bounded function $f$ on $\Ge$, $P_tf(g)=\esp[f(\bbb_t)]$. 
\begin{cor}\label{cor: horGradGnSucces}
    	Let $\Ge$ be a homogeneous Carnot group of step $2$ and of rank $n\geq 2$. For any bounded measurable function $f$ on $\Ge$, for any $g\in \Ge$ and $t\geq 1$:
	\begin{equation}\label{gradientInequality2}
	||\nabla_{\mathcal{H}}P_tf||_{\mathcal{H}}\leq2||f||_{\infty}\frac{C_1(n)}{\sqrt{t}}\text{ a.e.}
	\end{equation}
	with $C_1(n)$ the explicit constant from Theorem \ref{thm: successfulGn}.
\end{cor}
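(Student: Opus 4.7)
The plan is to derive this gradient estimate as a direct consequence of the coupling rate obtained in Corollary \ref{cor: couplingAllHomogeneous}, combined with the upper gradient framework set up in Subsection \ref{subsec: NormsGardientGn}. The argument has the same structure as the corresponding bound on the Heisenberg group from \cite{banerjee2017coupling}: first bound the oscillation of $P_tf$ between two nearby points using the coupling, then pass to the infinitesimal limit to extract a gradient bound.

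First I would fix $g,\tg\in\Ge$ with $g=(x,z)$, $\tg=(\tx,\tz)$, and consider the successful coupling $(\bbb_t,\tbbb_t)_t$ given by Corollary \ref{cor: couplingAllHomogeneous} with first coupling time $\tau$. By the coupling inequality \eqref{eq: Chap1SemigpIn},
\begin{equation*}
|P_tf(g)-P_tf(\tg)|\leq 2\|f\|_{\infty}\,\pr(\tau>t).
\end{equation*}
Next, for $\tg$ close enough to $g$ (so that the condition $t\geq 4\beta_n\|x-\tx\|_2^2$ is satisfied, which happens for any fixed $t>0$ once $\tg$ is sufficiently near $g$), I substitute the coupling rate from Corollary \ref{cor: couplingAllHomogeneous} to obtain
\begin{equation*}
|P_tf(g)-P_tf(\tg)|\leq 2\|f\|_{\infty}\left(C_1(n)\frac{\|x-\tx\|_2}{\sqrt{t}}+\frac{C_2(n)}{m_1(n)^2}\frac{d_{cc}(g,\tg)^2}{t}\right).
\end{equation*}

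Then I would divide by $d_{cc}(g,\tg)$ and invoke the elementary bound $\|x-\tx\|_2\leq d_{cc}(g,\tg)$ (Remark \ref{NB: distanceHorizontale}) to control the first term, while the second term goes to zero because $d_{cc}(g,\tg)/t\to 0$ as $\tg\to g$. Taking the limit $r\downarrow 0$ in the definition \eqref{gradient length} of the gradient length therefore yields
\begin{equation*}
|\nabla P_tf|(g)\leq 2\|f\|_{\infty}\frac{C_1(n)}{\sqrt{t}}.
\end{equation*}
Finally, applying the upper-gradient comparison \eqref{eq:Uppergradient} gives the claimed bound on $\|\nabla_{\mathcal H}P_tf\|_{\mathcal H}$ almost everywhere. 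The restriction $t\geq 1$ is only cosmetic here; it ensures that the threshold $4\beta_n\|x-\tx\|_2^2$ is not an issue once one takes $\tg$ close enough to $g$, since then $\|x-\tx\|_2^2$ is much smaller than $t$.

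There is no real obstacle: the smoothness of $P_tf$ (ensured by hypoellipticity of $L$) guarantees that the gradient length is well-defined and that the upper-gradient inequality applies; the only mildly delicate point is that the coupling rate is only valid for $t$ above a threshold depending on $\|x-\tx\|_2$, but this disappears in the limit $\tg\to g$, which is precisely the limit used to form $|\nabla P_tf|(g)$.
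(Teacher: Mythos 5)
Your proof is correct and follows essentially the same route as the paper: apply the coupling inequality \eqref{eq: Chap1SemigpIn} to the coupling from Corollary \ref{cor: couplingAllHomogeneous}, substitute the coupling rate bound, divide by $d_{cc}(g,\tg)$ and pass to the limit to bound the gradient length, then invoke the upper-gradient inequality \eqref{eq:Uppergradient}. The only cosmetic difference is that you explicitly use $\|x-\tx\|_2\leq d_{cc}(g,\tg)$ before dividing, whereas the paper takes this step built into the second form of the estimate in Corollary \ref{cor: couplingAllHomogeneous}.
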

\begin{proof}
We consider $f$ a bounded measurable function on $\Ge$ and $g,\tg\in \Ge$. We define the following coupling $(\bbb_t,\tbbb_t)_t$:
\begin{itemize}
    \item still denoting by $\tau$ the first coupling time, for $t\in[0,\tau]$, we take $(\bbb_t,\tbbb_t)_{0\leq t\leq\tau}$ the coupling constructed in Corollary \ref{cor: couplingAllHomogeneous};
    \item for $t\geq \tau$, we take $\bbb_t=\tbbb_t$.
\end{itemize}
We have: 
\begin{align}\label{inegalité}
	|P_tf(g)-P_tf(\tg)|&=|\esp[f(\bbb_t)-f(\tbbb_t)]|\leq\esp[|f(\bbb_t)-f(\tbbb_t)|]\notag\\
	&= \esp[|f(\bbb_t)-f(\tbbb_t)|\mathbb{1}_{\{{\tau}>t\}}]\notag\\
	&\leq 2||f||_{\infty}\pr(\tau>t).
	\end{align}
	Using 
	the estimate of the coupling rate (\ref{eq: couplingRateAllHomogeneous}), we get, for $t$ {not too close} to $0$:
	\begin{equation}\label{test}
	|P_tf(g)-P_tf(\tg)|\leq 2\|f\|_{\infty}\left(C_1(n)\frac{d_{cc}(g,\tg)}{\sqrt{t}}\mathbb{1}_{x\neq\tx}+\frac{C_2(n)}{{m_1(n)}^2}\frac{d_{cc}(g,\tg)^2}{t}\right).
	\end{equation}
	In particular, for $t$ {not too close} to $0$ and $f$ bounded measurable, $g\mapsto P_tf(g)$ is Lipschitz on $(\Ge,d_{cc})$.
     We have:
    \begin{equation}|\nabla P_tf|(g)=\lim\limits_{r\downarrow 0}\sup\limits_{\substack{
		g\neq \tg\\
		{d_{cc}}(g,\tg)<r}
}\left\lvert\frac{P_tf(g)-P_tf(\tg)}{d_{cc}(g,\tg)}\right\rvert\leq 2||f||_{\infty}\frac{C_1(n)}{\sqrt{t}}.\end{equation}
 As the gradient length $|\nabla P_tf|$ is an upper-gradient of $P_tf$, from Inequality (\ref{eq: upper-gradientInequality}), we obtain the expected result.
\end{proof}
We can also obtain estimates on $\Ge_n$ for the vertical gradient as defined in Subsection \ref{subsec: NormsGardientGn}.
\begin{cor}\label{cor: vertGradGnSucces}
    	Let $n\geq 3$. For any bounded measurable function $f$ on $\Ge_n$ and for any $t\geq 1$:
	\begin{equation}\label{gradientInequality4}
	\|\nabla_{\mathfrak{v}}P_t f\|\leq||f||_{\infty}\frac{C_2(n)}{t}\text{ a.e.}
	\end{equation}
	with $C_2(n)$ the explicit constant from Theorem \ref{thm: successfulGn}.
\end{cor}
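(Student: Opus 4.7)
The approach runs exactly in parallel with Corollary \ref{cor: horGradGnSucces}, but it exploits the same-fiber coupling of Theorem \ref{Couplage par lignes}, whose rate depends only on the vertical displacement. Fix $x\in\mathbb{R}^n$ and consider two points $g=(x,z)$ and $\tg=(x,\tz)$ lying in the same fiber of $\Ge_n$. Then $g^{-1}\star\tg=(0,\tz-z)$, so the skew-symmetric part is exactly $\zeta=\tz-z$. I would build the coupling $(\bbb_t,\tbbb_t)_t$ by running the coupling of Theorem \ref{Couplage par lignes} up to the first meeting time $\tau$, and then imposing $\bbb_t=\tbbb_t$ for $t\geq\tau$.

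From Theorem \ref{Couplage par lignes} together with Remark \ref{Rem: tGd par rapport à Z}, for every $t>0$
\begin{equation*}
\pr(\tau>t)\leq \left(\beta_n\frac{\|\tz-z\|_2}{t}\right)\wedge 1.
\end{equation*}
Plugging this into the coupling inequality \eqref{eq: Chap1SemigpIn} and using $C_2(n)=2\beta_n$ gives
\begin{equation*}
|P_tf(x,z)-P_tf(x,\tz)|\leq 2\|f\|_\infty\pr(\tau>t)\leq \|f\|_\infty\frac{C_2(n)}{t}\,\|\tz-z\|_2.
\end{equation*}
For any $t\geq 1$ this bound is linear in $\|\tz-z\|_2$, so $\tz\mapsto P_tf(x,\tz)$ is Lipschitz on the fiber $\{x\}\times\mathfrak{so}(n)$ with constant $\|f\|_\infty C_2(n)/t$.

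To conclude, I would apply the vertical upper-gradient inequality \eqref{eq: VertUppergradient}: for almost every $(x,z)\in\Ge_n$,
\begin{equation*}
\|\nabla_{\mathfrak v}P_tf(x,z)\|\leq \lim_{r\downarrow 0}\,\sup_{\substack{z\neq\tz\\ \|z-\tz\|_2<r}}\frac{|P_tf(x,z)-P_tf(x,\tz)|}{\|z-\tz\|_2}\leq \|f\|_\infty\frac{C_2(n)}{t}.
\end{equation*}
There is no real obstacle here: all the difficulty is absorbed into Theorem \ref{Couplage par lignes}, and the only points to verify are that starting in a common fiber yields $\zeta=\tz-z$ (immediate from the group law) and that the linear-in-$\|\tz-z\|_2$ factor is preserved under the limit $\tz\to z$ that defines the gradient length. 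The hypothesis $n\geq 3$ is not forced by the argument ($n=2$ being the Heisenberg case of \cite{banerjee2017coupling}), and the assumption $t\geq 1$ only serves to keep the Lipschitz constant bounded and to match the regime used in Corollary \ref{cor: horGradGnSucces}.
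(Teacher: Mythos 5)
Your proof is correct and follows essentially the same route as the paper's: couple $g=(x,z)$ with $\tg=(x,\tz)$ in the same fiber using Theorem \ref{Couplage par lignes}, feed the rate $\pr(\tau>t)\leq\beta_n\|\tz-z\|_2/t$ into the semigroup inequality to get a Lipschitz bound on the fiber, and conclude via the vertical upper-gradient inequality \eqref{eq: VertUppergradient}. Your remarks at the end are also accurate: the identity $\zeta=\tz-z$ is immediate from the group law, the argument is equally valid for $n=2$, and the restriction $t\geq 1$ is not forced since Remark \ref{Rem: tGd par rapport à Z} gives the rate for all $t>0$ (and the gradient length only involves the limit $\tz\to z$, where the constraint $t\geq(n-1)\|\tz-z\|_2$ is satisfied automatically).
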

\begin{proof}
  Denote $g=(x,z)\in\Ge_n$. We set $\tz\in\mathfrak{so}(n)$ and $\tg=(x,\tz)\in\Ge_n$. Applying (\ref{inegalité}) together with the coupling rate estimate (\ref{eq: InegaliteMemeFibre}), we get:
	\begin{equation*}
	|P_tf(g)-P_tf(\tg)|\leq 2\|f\|_{\infty}\beta_n\frac{\|z-\tz\|_2}{t}\text{ with }\beta_n=\frac{C_2(n)}{2}.
	\end{equation*}
	As previously, we obtain an inequality for the gradient length defined on the fiber $\{(x,z)\in\Ge_n\ | \ z\in\mathfrak{so}(n)\}$. We use Inequality (\ref{eq: VertUppergradient}) to obtain the expected result.
\end{proof}

\subsection{Estimates of the gradient for the harmonic functions on $\Ge_n$}\label{Subsec GradHarmonique}
By using the results from Theorem \ref{thm: sortieDomaine}, we can obtain the same results than in \emph{Corollaries 4.4, 4.5 and 4.6} from~\cite{banerjee2017coupling}. As the proofs are exactly the same, we just give here an overview of the method and refer the reader to~\cite{banerjee2017coupling} for more details.

As in Section \ref{sec: coupling Vs Exit}, we consider a domain $D$ on $\Ge_n$ and $\bar{D}$ its closure. Let $f$ be a continuous function on $\bar{D}$, smooth and such that $f$ is harmonic on $D$, i.e., $Lf(g)=0$ for $g\in D$. For any set $A\subset D$, we define $\osc_A(f):=\sup\limits_A f-\inf\limits_A f$. In particular, supposing $\bar{D}$ compact, by continuity of $f$ on $\bar{D}$, $\osc_{D}(f)$ is finite. Let $g=(x,z),\tg=(\tx,\tz)\in D$, as $f$ is harmonic, by the Itô formula, for any successful coupling $(\bbb_t,\tbbb_t)_t$ (defined such that $\bbb_t=\tbbb_t$ after the first coupling time $\tau$ as previously) we get:
\begin{align*}
    |f(g)-f(\tg)|&=\left|\esp\left[f(\bbb_{\tau_D(\bbb)})-f(\tbbb_{\tau_D(\tbbb)})\right]\right|\\
    &\leq \osc_D(f)\pr\left(\tau>\tau_D(\bbb)\wedge\tau_D(\tbbb)\right)
\end{align*}
In particular, using the results from Theorem \ref{thm: sortieDomaine}, for $\tg$ close enough to $g$, we get:
\begin{align*}
    |f(g)-f(\tg)|&\leq \osc_D(f)\left({D_1}\times\left(n\sqrt{n}+\frac{n\sqrt{n}}{\delta_g}+\frac{{n^{10+\frac{1}{4}}}}{\delta_g^4}\right)\|x-\tx\|_2\right.\\
    &\left.+{D_2}\times \left(1+\frac{{n^{9+\frac{1}{2}}}}{\delta_g^4}\right)\max\left(
    \|\zeta\|_2,\|\zeta\|_2^2\right)\right).
\end{align*}
Finally, using the gradient length associated to the pseudo-distance $\bd$, we get: 
\begin{equation}|\nabla P_tf|_{\bd}(g)\leq \osc_D(f) {D_1}\times\left(n\sqrt{n}+\frac{n\sqrt{n}}{\delta_g}+\frac{{n^{10+\frac{1}{4}}}}{\delta_g^4}\right).\end{equation}
Using \eqref{comparaisonLongueurGradients}, we have: ${m_1(n)}
|\nabla P_t f|\leq |\nabla P_t f|_{\bd}(g) $. Then we obtain:
\begin{cor}\label{cor: osculateur1}
   Let $n\geq 2$ and $D$ a domain on $\Ge_n$ such that its closure $\bar{D}$ is compact. For any $f\in \mathcal{C}(\bar{D})\cap\mathcal{C}^2(D)$ harmonic on $D$:
   \begin{equation*}||\nabla_{\mathcal{H}}f(g)||_{\mathcal{H}}\leq\osc_D(f) {\frac{1}{m_1(n)}}D_1
   \left(n\sqrt{n}+\frac{n\sqrt{n}}{\delta_g}+\frac{{n^{10+\frac{1}{4}}}}{\delta_g^4}\right)\text{ a.e. on }D\end{equation*}
   with $D_1$ the same constant as in Theorem \ref{thm: sortieDomaine} which depends neither on $n$, nor on $D$.
\end{cor}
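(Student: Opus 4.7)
The plan is to reduce the horizontal gradient bound to a divided-difference bound and then invoke Theorem \ref{thm: sortieDomaine} together with the upper-gradient machinery recalled in Subsection \ref{subsec: NormsGardientGn}. Fix $g \in D$ and a nearby point $\tg$ close to $g$ in $\bd$, so that the hypotheses $\bd(g,\tg) \leq \frac{\sqrt{2}}{3}\delta_g$ and $\|x-\tx\|_2 \leq \frac{2}{\sqrt{n-1}}$ are satisfied. I would run the successful coupling $(\bbb_t,\tbbb_t)_t$ of Theorem \ref{thm: successfulGn} on $[0,\tau]$ and then let the two processes coincide for $t\geq \tau$; in particular they exit $D$ at the same time and at the same point on the event $\{\tau \leq \tau_D(\bbb)\wedge\tau_D(\tbbb)\}$.

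Since $\bar{D}$ is compact, $f$ is bounded on $\bar{D}$ and harmonic on $D$, so applying the optional stopping theorem to the local martingales $(f(\bbb_{t\wedge\tau_D(\bbb)}))_t$ and $(f(\tbbb_{t\wedge\tau_D(\tbbb)}))_t$ (both exit times being a.s. finite because $D$ is bounded) yields $f(g)=\esp[f(\bbb_{\tau_D(\bbb)})]$ and similarly for $\tg$. By the coupling construction the integrands agree on $\{\tau \leq \tau_D(\bbb)\wedge\tau_D(\tbbb)\}$, hence
\begin{equation*}
|f(g)-f(\tg)| \leq \osc_D(f)\,\pr\bigl(\tau > \tau_D(\bbb)\wedge\tau_D(\tbbb)\bigr).
\end{equation*}
Plugging in Theorem \ref{thm: sortieDomaine} directly produces
\begin{equation*}
|f(g)-f(\tg)| \leq \osc_D(f)\left[D_1\Bigl(n\sqrt{n}+\tfrac{n\sqrt{n}}{\delta_g}+\tfrac{n^{10+\frac14}}{\delta_g^4}\Bigr)\|x-\tx\|_2 + D_2\Bigl(1+\tfrac{n^{9+\frac12}}{\delta_g^4}\Bigr)\max(\|\zeta\|_2,\|\zeta\|_2^2)\right].
\end{equation*}

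Next I would pass to the gradient length $|\nabla f|_{\bd}(g)$ associated with the pseudo-distance $\bd$. Dividing the previous inequality by $\bd(g,\tg)=\sqrt{\|x-\tx\|_2^2+\|\zeta\|_2}$ and using $\|x-\tx\|_2 \leq \bd(g,\tg)$, $\|\zeta\|_2 \leq \bd(g,\tg)^2$, the first term stays bounded by $D_1(n\sqrt{n}+\tfrac{n\sqrt n}{\delta_g}+\tfrac{n^{10+\frac14}}{\delta_g^4})$ while the $\max(\|\zeta\|_2,\|\zeta\|_2^2)/\bd(g,\tg)$ contribution is $O(\bd(g,\tg))$ and therefore vanishes as $\tg\to g$. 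Taking the limsup as $r\downarrow 0$ in the definition of $|\nabla f|_{\bd}$ gives
\begin{equation*}
|\nabla f|_{\bd}(g) \leq \osc_D(f)\,D_1\Bigl(n\sqrt{n}+\tfrac{n\sqrt{n}}{\delta_g}+\tfrac{n^{10+\frac14}}{\delta_g^4}\Bigr).
\end{equation*}

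Finally, I would combine two facts from Subsection \ref{subsec: NormsGardientGn}: the upper-gradient inequality $\|\nabla_{\mathcal{H}}f(g)\|_{\mathcal{H}} \leq |\nabla f|(g)$ a.e. (as in \eqref{eq:Uppergradient}) and the comparison $m_1(n)|\nabla f|(g) \leq |\nabla f|_{\bd}(g)$ coming from the equivalence of $\bd$ with $d_{cc}$, see \eqref{comparaisonLongueurGradients}. Chaining these two yields the stated bound with the prefactor $\tfrac{1}{m_1(n)}D_1$. The only genuinely delicate point I anticipate is verifying that the Itô / optional-stopping argument is justified on the exit time of the (bounded) domain $D$ for both processes simultaneously under the non-co-adapted coupling; this is a boundedness-of-$f$ issue on $\bar D$ that is settled by compactness. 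Everything else is a direct composition of Theorem \ref{thm: sortieDomaine} with the standard upper-gradient calculus.
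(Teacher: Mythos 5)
Your proposal is correct and follows the paper's own approach essentially verbatim: the Itô/optional stopping identity $f(g)=\esp[f(\bbb_{\tau_D(\bbb)})]$ together with coalescence of the coupled paths on $\{\tau\leq\tau_D(\bbb)\wedge\tau_D(\tbbb)\}$ gives the oscillation bound, into which Theorem~\ref{thm: sortieDomaine} is substituted, and the gradient-length comparison with $\bd$ and the upper-gradient inequality \eqref{eq:Uppergradient} convert this into the stated horizontal-gradient estimate with prefactor $D_1/m_1(n)$. The only content you add beyond what is spelled out in Subsection~\ref{Subsec GradHarmonique} is the explicit observation that the $\max(\|\zeta\|_2,\|\zeta\|_2^2)/\bd(g,\tg)$ term is $O(\bd(g,\tg))$ and vanishes in the limit $r\downarrow 0$, which the paper leaves implicit; this is a worthwhile clarification but not a different route.
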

 We now consider the result from Corollary \ref{cor: osculateur1} when we take $D=B_{\Ge_n}(g,r)$ with $r>0$ and $B_{\Ge_n}(g,r)$ the usual open ball for the Carnot Carathéodory distance $d_{cc}$. Using the homogeneous invariant Harnack Inequality (see for example \emph{Theorem 5.7.2} from~\cite{bonfiglioli2007stratified}), there exists a constant $\bar{C}$ that does not depend on $g$ and $r$ but only on the metric $d_{cc}$ such that $\osc_{B_{\Ge_n}(g,r)}(f)\leq \bar{C}f(g)$.
This leads to the following result:
\begin{cor}\label{cor: osculateur2}
   Let $n\geq 2$. There exists a constant $C$ depending on $\Ge_n$ and $d_{cc}$ such that, for all $g\in \Ge_n$, $r>0$ and for any smooth non-negative function $f\in\mathcal{C}(\bar{B}_{\Ge_n}(g,r))$ harmonic on $B_{\Ge_n}(g,r)$ we have:
   \begin{equation*}||\nabla_{\mathcal{H}}f(g)||_{\mathcal{H}}\leq C\left(n\sqrt{n}+\frac{n\sqrt{n}}{\delta_g}+\frac{{n^{10+\frac{1}{4}}}}{\delta_g^4}\right)f(g)\text{ a.e..}\end{equation*}
\end{cor}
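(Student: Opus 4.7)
The strategy is entirely prescribed by the paragraph preceding the statement: we specialize Corollary \ref{cor: osculateur1} to a ball and then invoke the invariant Harnack inequality on Carnot groups to replace the oscillation by a multiple of $f(g)$. Concretely, the plan is to take $D=B_{\Ge_n}(g,r)$ (whose closure is compact since $d_{cc}$ induces the Euclidean topology on $\Ge_n$), note that $f\in \mathcal C(\bar D)\cap \mathcal C^{2}(D)$ is harmonic on $D$, and apply Corollary \ref{cor: osculateur1} at the center $g$ to obtain
\begin{equation*}
\|\nabla_{\mathcal{H}}f(g)\|_{\mathcal{H}}\;\le\; \osc_{D}(f)\,\frac{D_1}{m_1(n)}\!\left(n\sqrt n+\frac{n\sqrt n}{\delta_g}+\frac{n^{10+\frac14}}{\delta_g^{4}}\right)\quad\text{a.e.}
\end{equation*}

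The second step is to bound $\osc_{D}(f)$. Since $f\ge 0$ is harmonic on $B_{\Ge_n}(g,r)$, the invariant Harnack inequality on homogeneous Carnot groups (e.g.\ \emph{Theorem 5.7.2} in \cite{bonfiglioli2007stratified}) gives, after applying it on a concentric ball of radius $r/2$ and iterating a bounded number of times to cover $B_{\Ge_n}(g,r)$ by finitely many overlapping balls whose number depends only on $(\Ge_n,d_{cc})$, a constant $\bar C=\bar C(\Ge_n,d_{cc})$ independent of $g$ and $r$ such that
\begin{equation*}
\sup_{B_{\Ge_n}(g,r)} f \;\le\; \bar C\, f(g).
\end{equation*}
Because $f\ge 0$, $\osc_{D}(f)\le \sup_{D}f\le \bar C f(g)$. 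Absorbing $\bar C$ and $1/m_1(n)$ into a single constant $C=C(\Ge_n,d_{cc})$ then yields the claimed inequality.

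The only genuine difficulty is the Harnack step: the Harnack inequality in its usual form controls $\sup$ versus $\inf$ on a sub-ball rather than on $B_{\Ge_n}(g,r)$ itself, so one must be careful about the domain of validity. The cleanest way around this is to apply Corollary \ref{cor: osculateur1} with $D=B_{\Ge_n}(g,r)$ but to observe that the result only depends on $\osc$ taken on an arbitrary open set containing $g$ where $f$ is harmonic; alternatively, one can replace the ball by $B_{\Ge_n}(g,r/2)$, noting that $\delta_g$ computed with respect to $B(g,r/2)$ is still comparable (up to a factor depending only on $n$ through $m_2(n)/m_1(n)$) to $\delta_g$ computed with respect to $B(g,r)$, so the $n$-dependent factor in front is preserved. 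Once this reduction is set up, the remaining computation is simply a substitution, and the rest of the argument is routine, mirroring the proof of the analogous \emph{Corollary 4.6} in \cite{banerjee2017coupling}.
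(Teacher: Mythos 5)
Your proposal follows the paper's argument step for step: specialize Corollary \ref{cor: osculateur1} to $D=B_{\Ge_n}(g,r)$ and bound the oscillation by the homogeneous invariant Harnack inequality, absorbing $\bar C$ and $1/m_1(n)$ into the final constant $C(\Ge_n,d_{cc})$. Your extra paragraph raises a legitimate point that the paper's one-paragraph sketch does not dwell on, namely that Harnack in its invariant form bounds $\sup$ over a sub-ball, so strictly one should either assume harmonicity on a slightly larger ball or run Corollary \ref{cor: osculateur1} on a concentric ball of half the radius; your second workaround (use $D=B_{\Ge_n}(g,r/2)$, absorbing the comparability of the two $\delta_g$'s into $C$) is the clean way to do this. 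Your first suggested fix, that "the result only depends on $\osc$ taken on an arbitrary open set containing $g$ where $f$ is harmonic," is phrased too loosely: in Corollary \ref{cor: osculateur1} the oscillation and $\delta_g=\bd(g,D^c)$ are both tied to the same domain $D$, so shrinking the set over which one measures $\osc$ requires simultaneously re-running the exit-time comparison on that smaller domain, which is precisely what your second fix does.
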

Finally, as $\frac{||\nabla_{\mathcal{H}}f(g)||_{\mathcal{H}}}{f(g)}=||\nabla_{\mathcal{H}}\log f(g)||_{\mathcal{H}}$, we can obtain the following Cheng-Yau estimate:
\begin{cor}\label{cor: cheng-Yau}
   Let $n\geq 2$ and $g_0\in \Ge_n$. We consider a real $r>0$ and a smooth non-negative function $f$ harmonic on $B_{\Ge_n}(g_0,2r)$. Then:
   {\begin{equation*}
   \sup\limits_{g\in B_{\Ge_n}(g_0,r)}||\nabla_{\mathcal{H}}\log f(g)||_{\mathcal{H}}\leq \frac{C}{r}\text{ a.e.}\end{equation*}}
   with $C$ a constant that does depend only on $\Ge_n$ and $d_{cc}$ (neither on $g_0$ nor on $r$).
\end{cor}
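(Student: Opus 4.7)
The plan is to follow the classical Cheng--Yau argument, reducing to the unit scale by a dilation and then invoking Corollary~\ref{cor: osculateur2}. As the informal remark just before the statement makes plain, since $f$ is non-negative the bound $\|\nabla_{\mathcal H}f(g)\|_{\mathcal H}\le C(g)\,f(g)$ of Corollary~\ref{cor: osculateur2} rewrites as $\|\nabla_{\mathcal H}\log f(g)\|_{\mathcal H}\le C(g)$, so the objective is to prove $\|\nabla_{\mathcal H}\log f\|_{\mathcal H}\le C/r$ on $B_{\Ge_n}(g_0,r)$ for a constant $C$ depending only on $\Ge_n$ and $d_{cc}$.

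I would exploit the natural dilations $\delta_\lambda(x,z)=(\lambda x,\lambda^2 z)$ of the stratified group $\Ge_n$. These Lie group automorphisms multiply $d_{cc}$ by $\lambda$, each horizontal vector field $\bX_i$ by $\lambda$, and the sub-Laplacian $L$ by $\lambda^2$; in particular they preserve harmonicity. I would therefore set $\tilde f(h):=f(g_0\star\delta_r(h))$ and, using left-invariance of $d_{cc}$ together with its scaling under $\delta_r$, check that $\tilde f$ is smooth, non-negative and harmonic on $B_{\Ge_n}(0,2)$.

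Then, for an arbitrary $h\in B_{\Ge_n}(0,1)$, the triangular inequality for $d_{cc}$ gives $B_{\Ge_n}(h,1)\subset B_{\Ge_n}(0,2)$, so $\tilde f$ is admissible in Corollary~\ref{cor: osculateur2} applied on the ball $B_{\Ge_n}(h,1)$. The pseudo-distance $\delta_h=\bd(h,B_{\Ge_n}(h,1)^c)$ is bounded below by $m_1(n)$ thanks to the equivalence \eqref{eq: equivSeminorme}, so the prefactor $n\sqrt n+n\sqrt n/\delta_h+n^{10+1/4}/\delta_h^4$ collapses to a constant $C=C(\Ge_n,d_{cc})$. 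This yields $\|\nabla_{\mathcal H}\log\tilde f(h)\|_{\mathcal H}\le C$ uniformly in $h\in B_{\Ge_n}(0,1)$.

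Finally, I would transport the estimate back to $f$ by the chain rule: left-invariance of the $\bX_i$ combined with their degree-one homogeneity under $\delta_r$ gives $\bX_i\tilde f(h)=r\,(\bX_i f)(g_0\star\delta_r h)$, hence $\|\nabla_{\mathcal H}\tilde f(h)\|_{\mathcal H}=r\,\|\nabla_{\mathcal H}f(g_0\star\delta_r h)\|_{\mathcal H}$ while $\tilde f(h)=f(g_0\star\delta_r h)$. Letting $g=g_0\star\delta_r h$ range over $B_{\Ge_n}(g_0,r)$ as $h$ ranges over $B_{\Ge_n}(0,1)$ produces the desired bound $\|\nabla_{\mathcal H}\log f(g)\|_{\mathcal H}\le C/r$ almost everywhere. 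The main obstacle I anticipate is bookkeeping: verifying each scaling identity for $L$, $\bX_i$, $d_{cc}$ and $\bd$ carefully enough that the final constant depends only on $\Ge_n$ and $d_{cc}$, and checking that the equivalence constant $m_1(n)$ from \eqref{eq: equivSeminorme} does not smuggle any hidden $r$-dependence into the argument.
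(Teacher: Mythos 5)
Your dilation argument is correct and is precisely the standard scaling proof of Cheng--Yau from Corollary~\ref{cor: osculateur2}, which is also the route the paper intends (it leaves the proof implicit, deferring to \emph{Corollary 4.6} of~\cite{banerjee2017coupling}); you also rightly read the conclusion as a bound on $\|\nabla_{\mathcal H}\log f\|_{\mathcal H}$, which is forced both by scale-invariance in $f$ and by the sentence introducing the corollary. One small bookkeeping slip: from~\eqref{eq: equivSeminorme} the lower bound is $\delta_h\ge 1/m_2(n)$ rather than $m_1(n)$, but since either is a positive quantity depending only on $n$, the prefactor in Corollary~\ref{cor: osculateur2} still collapses to a constant and the conclusion is unaffected.
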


\subsection{Estimates of the gradient for the harmonic functions on all homogeneous step $2$ Carnot groups}\label{Subsec GradHarmoniqueCarnot}
We can also extend the results from the previous section to all homogeneous, step $2$ Carnot groups $\Ge$. 

Set $n\geq 2$ the rank of $\Ge$. For $r>0$, $g\in \Ge$, we denote $B_{\Ge}(g,r)$ the open ball for the Carnot Carathéodory metric. 
Consider the surjective morphism $\phi$ from Proposition \ref{epimorphisme} and $a\in\phi^{-1}(\{g\})$. We can show that  $\phi\left(B_{\Ge_n}(a,r)\right)=B_{\Ge}(g,r)$.
Let $\tg\in B_{\Ge}(g,r)$. There exists $\tilde a\in \Ge_n$ such that $d_{cc}(a,\tilde a)=d_{cc}(g,\tg)$. In particular, $\tilde a\in B_{\Ge_n}(a,r)$. As in Corollary $\ref{cor: couplingAllHomogeneous}$, we can construct a coupling $(\mathbf B_t,\tilde{\mathbf{B}}_t)_t$ of Brownian motion starting at $(a,\tilde a)$ and a coupling $(\bbb_t,\tbbb_t)_t:=\left(\phi(\mathbf B_t),\phi(\tilde{\mathbf{B}}_t)\right)_t$ of Brownian motions on $\Ge$ starting at $(g,\tg)$.
We also choose $\tilde{\mathbf{B}}_t={\mathbf{B}}_t$ for $t\geq \tau:=\inf\{s\geq 0\,|\,\tilde{\mathbf{B}}_s={\mathbf{B}}_s\}$.

Set $f$ a smooth function, harmonic on $B_{\Ge}(g,r)$ and continuous on $\bar{B}_{\Ge}(g,r)$. By construction of $\phi$, we also have $f\circ\phi$ smooth and harmonic on $B_{\Ge}(a,r)$ and continuous on $\bar{B}_{\Ge}(a,r)$. With the same method as for Corollary \ref{cor: osculateur1}: 
\begin{align*}
    |f(g)-f(\tg)|&=|f\circ\phi(a)-f\circ\phi(\ta)|\\
    &\leq \osc_{B_{\Ge}(a,r)}(f\circ \phi)\pr\left(\tau>\tau_{B_{\Ge}(a,r)}(\mathbf B)\wedge\tau_{B_{\Ge}(a,r)}(\tilde{\mathbf{B}})\right)\\
    &\leq \osc_{B_{\Ge}(g,r)}(f)\left({D_1}\times\left(n\sqrt{n}+\frac{n\sqrt{n}}{\delta_a}+\frac{n^{{10+\frac{1}{4}}}}{\delta_a^4}\right)d_{cc}(a,\tilde a)\right.\\
    &\left.+{D_2}\times \left(1+\frac{n^{{9}+\frac{1}{2}}}{\delta_a^4}\right)\max\left(
    \frac{d_{cc}(a,\tilde a)^2}{{m_1(n)^2}},\frac{d_{cc}(a,\tilde a)^4}{{m_1(n)^4}}\right)\right).
\end{align*}
In particular using the equivalence relation \eqref{eq: equivSeminorme}: 
\begin{align*}\bd_a=\bd\left(a,\bar{B}_{\Ge_n}(a,r)^{c}\right)\geq {\frac{r}{m_2(n)}}.
\end{align*}
As $d_{cc}(a,\tilde a)=d_{cc}(g,\tg)$:
\begin{align}
    ||\nabla_{\mathcal{H}}f(g)||_{\mathcal{H}}
    &\leq \osc_{B_{\Ge}(g,r)}(f){D_1}\left(n\sqrt{n}+{m_2(n)}\frac{n\sqrt{n}}{r}+{m_2(n)^4}\frac{n^{{10+\frac{1}{4}}}}{r^4}\right).
\end{align}
We can then extend Corollaries \ref{cor: osculateur1}, \ref{cor: osculateur2} and \ref{cor: cheng-Yau} to all homogeneous step $2$ Carnot groups.

	\bibliographystyle{plain}
	\bibliography{Bibliographie}
\end{document}